\documentclass[11pt,reqno]{amsart} 
\usepackage[margin=1in]{geometry}
\usepackage{appendix}
\usepackage{stackrel}
\usepackage{bm}
\usepackage{amsfonts}
\usepackage{setspace}
\usepackage{amsmath}
\usepackage{bbm}
\usepackage{nicefrac}
\usepackage{amsmath,amsthm,amssymb}
\usepackage{graphicx}
\usepackage{xcolor}
\usepackage{dsfont}
\usepackage{amsmath}
\usepackage{amssymb}
\usepackage{mathtools}

\setcounter{MaxMatrixCols}{30}
\newtheorem{theorem}{Theorem}[section]
\newtheorem{lemma}[theorem]{Lemma}
\newtheorem{proposition}[theorem]{Proposition}

\newtheorem{corollary}[theorem]{Corollary}
\newtheorem{definition}[theorem]{Definition}
\newtheorem{remark}[theorem]{Remark}
\newtheorem{Thm}{Theorem}

\newtheorem{conjeture}[Thm]{Conjeture}

\DeclareMathOperator*{\dist}{dist}
\newcommand{\N}{\mathbb{N}}

\newcommand{\Z}{\mathbb{Z}}

\newcommand{\R}{\mathbb{R}}
\newcommand{\C}{\mathbb{C}}
\newcommand{\E}[1]{\mathbb{E}\left[#1\right]}
\newcommand{\rL}{\right\}}
\newcommand{\lL}{\left\{}
\newcommand{\rC}{\right ]}
\newcommand{\lC}{\left [}
\newcommand{\rP}{\right)}
\newcommand{\lP}{\left(}
\newcommand{\rabs}{\right|}
\newcommand{\labs}{\left|}
\newcommand{\abs}[1]{\left| #1 \right|}
\newcommand{\absD}[1]{\left|\left| #1 \right|\right|}
\newcommand{\Prob}[1]{\mathbb{P}\left(#1\right)}
\usepackage{color}

\usepackage{lineno}

\makeatletter
%\def\makeLineNumberLeft{%
%  \linenumberfont\llap{\hb@xt@\linenumberwidth{\LineNumber\hss}\hskip\linenumbersep}% left line number
%  \hskip\columnwidth% skip over column of text
%  \rlap{\hskip\linenumbersep\hb@xt@\linenumberwidth{\hss\LineNumber}}\hss}% right line number
%\leftlinenumbers% Re-issue [left] option
\makeatother

\usepackage{blindtext}

\title[Random Toeplitz Matrices]{{Random Toeplitz Matrices: The Condition Number under High Stochastic Dependence}} 
\author{Manrique--Mir\'on, Paulo}
\address{}
\email{pmanriquem@ipn.mx}
\thanks{}
\keywords{Random Toeplitz Matrix, Random Circulant Matrix, Random Hankel Matrix, Decoupling, Condition Number, Locally Sub--Gaussian Random Variables, 
Salem--Zygmund Type Inequality,
Small Ball Probability, Random Trigonometric Polynomial}
\date{\today}
%\subjclass[2000]{60G99; 65F35; 15B05}
\begin{document}
\maketitle
%\linenumbers
\begin{abstract}
In this paper, we study the condition number of a random Toeplitz matrix. Since a Toeplitz matrix is a diagonal constant matrix, its rows or columns cannot be stochastically independent. This situation does not permit us to use the classic strategy to analyze its minimum singular value when all the entries of a random matrix are stochastically independent. Using a circulant embedding as a decoupling technique, we break the stochastic dependence of the structure of the Toeplitz matrix and reduce the problem to analyze the extreme singular values of a random circulant matrix. A circulant matrix is, in fact, a particular case of a Toeplitz matrix, but with a more specific structure, where it is possible to obtain explicit formulas for its eigenvalues and also for its singular values. Among our results, we show the condition number of a non--symmetric random circulant matrix $\mathcal{C}_n$ of dimension $n$ under the existence of moment generating function of the random entries is $\kappa\lP\mathcal{C}_n\rP =\mbox{O}\lP\frac{1}{\varepsilon} n^{\rho+1/2} \lP\log n\rP^{1/2}\rP$ with probability $1-\mbox{O}\lP \lP\varepsilon^2 + \varepsilon\rP n^{-2\rho} + n^{-1/2+\textnormal{o}(1)}\rP$ for any $\varepsilon >0$, $\rho\in(0,1/4)$. Moreover, if the random entries only have the second moment, the condition number satisfies $\kappa\lP\mathcal{C}_n\rP = \mbox{O}\lP \frac{1}{\varepsilon} n^{\rho+1/2} \log n\rP$  with probability $1-\mbox{O}\lP\lP\varepsilon^2 + \varepsilon\rP n^{-2\rho} + \lP\log n\rP^{-1/2}\rP$. Also, we analyze the condition number of a random circulant symmetric  matrix $\mathcal{C}^{sym}_n$. For the condition number of a random (non--symmetric or symmetric) Toeplitz matrix $\mathcal{T}_n$ we establish $\kappa\lP\mathcal{T}_n\rP \leq \kappa\lP\mathcal{C}_{2n}\rP \lP\sigma_{\min}\lP C_{2n} \rP\sigma_{\min}\lP S_n \rP\rP^{-1}$, where $\sigma_{\min}(A)$ is the minimum singular value of the matrix $A$. The matrix $C_{2n}$ is a random circulant matrix and $S_n:=F^*_{2,n} D_{1,n}^{-1}F_{2,n} + F^*_{4,n} D^{-1}_2 F_{4,n}$, where $F_{2,n},F_{4,n}$ are deterministic matrices, $F^*$ indicates the conjugate transpose of $F$, and $D_{1,n}, D_{2,n}$ are random diagonal matrices. From random experiments, we conjeture that $S_n$ is well conditioned if the moment generating function of the random entries of $\mathcal{C}_{2n}$ exists.
\end{abstract}

\section{Introduction}
The singularity of random matrices has been an intensely studied topic in the last years; see e.g., \cite{BorCha2012, Liv, Luh2018,RV1,RV}. Recall that a square matrix is called singular if its determinant is zero. A criterium to determine a matrix is singular is to verify if its minimum singular value is zero.  The singular values of a matrix carry more useful information about the properties of the matrix, inclusive if it is rectangular. For example, they play an important aspect in the celebrated Circular Law Theorem; see \cite{BorCha2012} for a systematic presentation. The singular values of a (square or rectangular) matrix $A$ are the eigenvalues of the matrix $\sqrt{A^T A}$, where $A^T$ denotes the transpose matrix of $A$. 

The extreme singular values are related to the operator norm of a matrix. The operator norm of an $n$-dimensional square matrix $A$ is defined by  
\[ 
\|A\| :=\max_{\|x\|_2 = 1} \| Ax \|_2, 
\]
where $\|\cdot\|_2$ denotes the Euclidean norm. If $0\leq\sigma_n\leq \sigma_{n-1}\leq\cdots\leq\sigma_1$ are the singular values of matrix $A$, we have,
\[
\|A\| =\max_{\|x\|_2 = 1} \| Ax \|_2 = \sigma_1, \;\;\;\;\; \left\|A^{-1}\right\| =\lC\min_{\|x\|_2 = 1} \| Ax \|_2 \rC^{-1}= \sigma_n^{-1}.
\] 
The last equality has only meaning when $A$ is non-singular. In the rest of this paper, we consider the following notation for the extreme singular values: $\sigma_{\max}:=\sigma_1$ and $\sigma_{\min}:=\sigma_n$.  In this context, it is known that $\sigma_{\min}$ measures the distance of a matrix $A$ to the set of singular matrices. More precisely, 
\[
\sigma_{\min} = \inf\lL \|E\| : A + E \mbox{ is singular and $E$ is $n\times n$ matrix} \rL.
\]
From the above identity, we can verify that if the minimum singular value is zero, then the matrix is singular. If $\sigma_{\min}\neq 0$, we can define the so--called {\it condition number} $\kappa(A)$ of a matrix $A$ as 
\[
\kappa(A):=\frac{\sigma_{\max}(A)}{ \sigma_{\min}(A)}.
\]
The condition number was independently introduced by Alan Turing (1948), and by John von Neumann and Herman Goldstine (1947) in order to study the accuracy in the solution of a linear system in the presence of finite--precision arithmetic \cite{cucker2016probabilistic}. By the definition of condition number it is easy to see $\kappa \geq 1$. If $\kappa$ is {\it very large}, the corresponding matrix is said to be {\it ill-conditioned}. The logic for this terminology is that if $\kappa$ is very large, then $\sigma_{\min}$ should be small and the matrix $A$ is close to the set of singular matrices. Then, a small perturbation of $A$ can cause loss accuracy in the computed solution of the system $Ax=b$; see \cite{demmel1987geometry}. Thus, it is interesting to set up conditions under which $\kappa$ is close to low values and this requires the estimation from below of the minimum singular value $\sigma_{\min}$ as well as the estimation from above of the maximum singular value $\sigma_{\max}$. These are precisely the main goals of this paper for the specific class of structured random matrices which are (non--symmetric and symmetric) Toeplitz matrices.\\

Among the first papers on the condition number of random matrices, we have one from Demmel \cite{demmel1987geometry}. He assumes that $A$ is an $n$-dimensional random square matrix such that $A/\|A\|_F$ ($\|\cdot\|_F$ is the Frobenius norm) is uniformly distributed on the unit sphere. Demmel defines $\kappa_1(A) := \| A\|_F \left\| A^{-1} \right\|$ as an approximation to the condition number and shows
\[
\frac{C (1-1/x)^{n^2-1}}{x} \leq \Prob{\kappa_1(A)\geq x} \leq \sum_{k=1}^{n^2} 2\binom{n^2}{k} \lP \frac{2n}{x}\rP^k,
\] where $C>0$ depends on $n$.\\

Other papers study the behavior of $\sigma_{\min}$ or the condition number of random matrices under either strong independency assumptions or some structure specification on their entries. For example, Rudelson and Vershynin \cite{RV1} prove that if $A$ has entries which are independent and identically distributed (i.i.d. for short) in the class of sub--Gaussian random variables (r.v. for short) with variance at least 1, then for all $\varepsilon\geq 0$, $\Prob{\sigma_{\min}(A) \leq \varepsilon n^{-1/2}} \leq C\varepsilon + c^n$, for some constants $C>0$ and $c\in(0,1)$ depending on the sub--Gaussian r.v. Vershynin \cite{vershynin2014invertibility} proves a similar estimation for a symmetric matrix where the upper triangle part has independent and identically sub--Gaussian r.v. entries. Recently, Litvak and et al. \cite{Lit} consider that $A$ is a matrix with sub--Gaussian i.i.d. entries with zero mean and unit variance. They show $\Prob{\kappa(A) \leq n/t}\leq 2\exp(-ct^2)$ for $t\geq 1$ and positive constant $c$ which depends on the sub--Gaussian r.v.\\

On the other hand, random matrices with structure have been analyzed, i.e., matrices whose entries follow certain disposition. For example, random triangular matrices $L_n$ with entries in the diagonal and below independently distributed and drawn from a standard Gaussian are poorly conditioned. In fact, Viswanath and Trefethen \cite{cucker2016probabilistic} show $\sqrt[n]{\kappa\lP L_n \rP}\to2$ almost surely as $n\to\infty$. Other kinds of structured matrices that have been analyzed are the Toeplitz matrices or the circulant matrices, which are very common objects in different areas of mathematics \cite{arup2018, David2012, gray2006toeplitz, SenVir2013}. Toeplitz matrices, for example, are used in different numerical algorithms that involve differential equations, integral equations, time series analysis, Markov chain, Fast Fourier Transform, among others \cite{ng2004iterative, pan2001structured}.\\

In the circulant case, Meckes \cite{Mec2009} proves that a random circulant matrix with Bernoulli entries is non--singular with probability going to $1$ when its dimension goes to $\infty$. Bose et al. \cite{AruRajKou2009} study the behavior of $\sigma_{\max}$ for random circulant-type matrices. The convergence of $\sigma_{\max}$ for random symmetric Toeplitz matrices is studied by Sen and Vir\'ag \cite{SenVir2013}, Bose and Sen \cite{aruSen2007}, while Adamczak \cite{Ada} gives bounds for $\sigma_{\max}$ of random rectangular Toeplitz matrices. Meckes \cite{Mec2007} shows $\sigma_{\max}=\textnormal{O}\lP \sqrt{n\log n}\rP$ for random (non--symmetric or symmetric) Toeplitz matrices with sub--Gaussian entries. Bose and Saha \cite{arup2018} recollect many results about circulant-type random matrices, for instance the limit of its {\it empirical spectral distribution function} and the convergence of $\sigma_{\max}$ with appropriate normalization. Pan, Svadlenka, and Zhao \cite{panSZ2015} estimate the condition number for (non--symmetric) random circulant and Toeplitz matrices with i.i.d. Gaussian random entries. Their approach and results are different from those derived here. \\

Note that in some cases mentioned above, it is assumed that a random matrix has O$(n^2)$ independent r.v. In the Toeplitz matrix, we can consider at most $2n-1$ random independent entries and in the circulant case at most $n$. In the symmetric case for Toeplitz and circulant, we have at most $n$ and $n/2$ independent random entries, respectively. A random matrix with all independent entries has stochastically independent rows and columns. This permits us to follow a strategy to estimate the value of $\sigma_{\min}$ as in \cite{Lit,RV1,vershynin2014invertibility}. Meanwhile, all the rows or columns of a random Toeplitz or circulant matrix are strongly stochastically dependent. Thus, the estimation of the extreme singular values of a random Toeplitz or circulant matrix needs a priori different approach.\\
%As an implication our results on the extreme singular values, we give an upper bound of the condition number for random (non--symmetric or symmetric) Toeplitz matrices.\\ 

A circulant matrix is a particular case of a Toeplitz matrix and its structure permits to give explicit expressions for its eigenvalues. In fact, circulant matrices have a lot of useful properties (see \cite{pan2001structured}). It is well-known that a Toeplitz matrix can be approximated by circulant matrices (see \cite{gray2006toeplitz,SenVir2013}). Thus, our strategy will be reduced the Toeplitz problem to the study of the extreme singular values of a random circulant matrix.\\

The main tools used to bring the Toeplitz problem to the circulant problem are the Cauchy Interlacing Theorem and the {\it circulant embedding}, which can be considered as a {\it decoupling technique}. They permit to break the strong stochastic dependence in the Toeplitz structure into circulant structure, where we can handle the estimation of the extreme singular values. Once the problem is reduced to circulant structure, we estimate $\sigma_{\max}$ by estimating the maximum modulus of a random polynomial on the unit circle. To do this, we use the so--called {\it Salem--Zygmund inequality}. Under the existence of the moment generating function (m.g.f. for short), we show that $\sigma_{\max}$ of a random (non--symmetric or symmetric) circulant matrix is $\mbox{O}\lP\lP n \log n\rP^{1/2}\rP$ with probability $1-\mbox{O}\lP n^{-2}\rP$. In the non--symmetric case, we can relax our conditions up to the existence of only the second moment, in which case $\sigma_{\max}$ is $\mbox{O}\lP n^{1/2} \log n\rP$ with probability $1-\mbox{O}\lP \lP \log n\rP^{-1/2}\rP$. On the other hand, using the concept of {\it least common denominator}, a tool developed to handle the so--called {\it small ball probability problem}, we obtain a lower bound for $\sigma_{\min}$. For any $\varepsilon>0$ and $\rho\in(0,1/4)$, we show that under mild conditions (see below the condition \eqref{fanto1}) $\sigma_{\min}$ of a random non--symmetric circulant matrix is at least $\varepsilon n^{-\rho}$ with probability $\mbox{O}\lP \frac{\varepsilon^2+\varepsilon}{n^{2\rho}} + \frac{1}{n^{1/2-\textnormal{o}(1)}}\rP$. In the symmetric case, we show that $\sigma_{\min}$ is at least $\varepsilon n^{-0.51}$ with probability $\mbox{O}\lP \frac{\varepsilon}{n^{0.1}} + \frac{1}{n^{77/300-\textnormal{o}(1)}}\rP$ for any $\varepsilon>0$. 
{
From our results in random circulant matrices, we can estimate with high probability the condition number of some random rectangular Toeplitz matrices (Theorem \ref{cor250820200741}). Also, we give an upper bound for $\sigma_{\max}$ of a random square Toeplitz matrix (Theorems \ref{thm:TnSM}, \ref{thm:TnSM1}) and we conjecture it is well conditioned when its entries are r.v. with m.g.f. (Conjeture \ref{con010920201139}). Since a Hankel matrix can be transformed into a Toeplitz matrix as we will see later, all our results for random Toeplitz matrices hold for random Hankel matrices.}\\

This paper is organized as follows. In Section \ref{sec:MainResults} we state the main results of this paper. The reduction of the Toeplitz problem to the circulant problem is explained in Section \ref{sec:ToeplitzToCirculant}. The results on random circulant matrices relating to random polynomial are stated in Section \ref{sec:Circulant}. In Section \ref{sec:salemZygmund} we prove the Salem--Zygmund inequalities for non--symmetric and symmetric cases. In Section \ref{sec:LowestSingV1} we establish a lower bound for $\sigma_{\min}$ of a random non--symmetric circulant matrix. Meanwhile, in Section \ref{sec:minSinSymCir} we give a lower bound for $\sigma_{\min}$ of a random symmetric circulant matrix. In Section \ref{sec:conNumToe} we give the proof of our results on the condition number of a random circulant matrix. The Appendix \ref{app10042020} and Appendix \ref{app30042020} contain additional material in order to provide clarity to this paper. \\

{\bf Acknowledgments.} I would like to thank Jes\'us L\'opez Estrada for his suggestions to improve the presentation of this work. 

%-----------------------------------------------------------------------
\section{Main results}\label{sec:MainResults}
%-----------------------------------------------------------------------

A Toeplitz matrix $\mathcal{T}_n$ is an $n\times n$ matrix with constant diagonals, i.e., $\mathcal{T}_n$ has the following structure
\[
\mathcal{T}_n =
\left[ \begin{array}{ccccc}
\xi_0 & \xi_1  & \ldots & \xi_{n-2}  &Ê\xi_{n-1} \\
\xi_{-1} & \xi_0  & \xi_{1} &   &Ê\xi_{n-1} \\
\vdots & \xi_{-1}  & \xi_0 & \ddots  &Ê\vdots \\
\xi_{-n+2} & & \ddots & \ddots & \xi_{1} \\
\xi_{-n+1} & \xi_{-n+2} & \cdots & \xi_{-1} & \xi_0
\end{array} \right].
\] 

When the entries of $\mathcal{T}_n$ are r.v., we say it is a random Toeplitz matrix. Let $\Xi:=\lL \xi_j : j\in\Z \rL$ be a set of i.i.d. r.v. We assume that the random entries of $\mathcal{T}_n$ belong to $\Xi$.\\

In the rest of this paper, any positive constant will be denoted by $C,C_0,C_1,C_2,\ldots$, which are not necessarily equal in each statement as they appear. We denote the norm of a real or complex number $z$ as $\abs{z}$.\\

First, we show that Toeplitz matrices and Hankel matrices are equivalent. Later, we state our results on Toeplitz and circulant matrices.

\subsection{Random Hankel matrix} Let $J:=(J_{i,j})$ be the $n\times n$ {\it exchange matrix}, i.e., the entries of $J$ are $J_{i,j}=1$ if $j=n-i+1$, and $J_{i,j}=0$ if $j\neq n-i+1$. Note that $J$ has the following properties: 
\begin{itemize}
\item $J^2=I_n$, where $I_n$ is the $n\times n$ identity matrix
\item $J^T = J$
\end{itemize}

An $n\times n$ matrix $H$ is called Hankel if $JH$ is a Toeplitz matrix. For more details on Hankel matrices see \cite{pan2001structured}. Observe\begin{equation}\label{eqn17052020}\sqrt{\lP JH \rP^T\lP J H\rP} = \sqrt{H^T J^T J H} = \sqrt{H^T H}.\end{equation} By \eqref{eqn17052020} we have that $JH$ and $H$ have the same singular values. Then, all results in this section hold for random (non--symmetric or symmetric) Hankel matrices under the corresponding assumptions.

\subsection{Non--symmetric Toeplitz} Let $\xi_j\in\Xi$ for $j=-n,\ldots,n$, and we consider the respective random Toeplitz matrix $ \mathcal{T}_n$ with $2n-1$ i.i.d. entries. For the first result on the Toeplitz matrix, we assume that the random entries have m.g.f. The existence of m.g.f. permits to use {\it Chernoff bounding technique} to estimate $\sigma_{\max}$.

\begin{theorem}[Non--symmetric Toeplitz: Maximum singular value I]\label{thm:TnSM} Suppose $\xi_0$ has zero mean and finite positive variance. If the m.g.f. of $\xi_0$ exists in an open interval {around zero, then}
\[
\Prob{\sigma_{\max}\lP \mathcal{T}_n\rP \geq C_0 \lP (2n)\log(2n) \rP^{\nicefrac{1}{2}}} \leq \frac{C_1}{(2n)^2},
\] where $C_0, C_1$ depend on the distribution of $\xi_0$.
\end{theorem}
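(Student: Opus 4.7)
\medskip

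\textbf{Proof plan.} The plan is to deduce the theorem from the circulant embedding strategy announced in the introduction, reducing the Toeplitz problem to the analysis of a random trigonometric polynomial on the unit circle. Concretely, I would first embed $\mathcal{T}_n$ as a principal submatrix of a $(2n)\times(2n)$ random circulant matrix $\mathcal{C}_{2n}$ whose first row is built from the $2n-1$ i.i.d.\ entries $\xi_{-(n-1)},\dots,\xi_{n-1}$ of $\mathcal{T}_n$, padded with one additional independent copy from $\Xi$ to close the circulant structure. Because singular values of a submatrix are dominated by the singular values of the full matrix (Cauchy interlacing for singular values), this gives
\[
\sigma_{\max}(\mathcal{T}_n)\;\le\;\sigma_{\max}(\mathcal{C}_{2n}).
\]

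\smallskip

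\textbf{From circulant to trigonometric polynomial.} Next, I would exploit the fact that a circulant matrix is normal and is diagonalized by the discrete Fourier transform. Writing the first row of $\mathcal{C}_{2n}$ as $(c_0,\dots,c_{2n-1})$ (a permutation of i.i.d.\ copies of $\xi_0$), its eigenvalues are
\[
\lambda_k \;=\; \sum_{j=0}^{2n-1} c_j \, e^{2\pi i jk/(2n)},\qquad k=0,1,\dots,2n-1,
\]
so that $\sigma_{\max}(\mathcal{C}_{2n})=\max_k |\lambda_k|\le \sup_{\theta\in[0,2\pi)}|P(\theta)|$, where $P(\theta):=\sum_{j=0}^{2n-1}c_j e^{ij\theta}$ is a random trigonometric polynomial with i.i.d.\ coefficients.

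\smallskip

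\textbf{Salem--Zygmund step.} The remaining task is to show that $\sup_\theta |P(\theta)|=\mathrm{O}\!\bigl((2n)\log(2n)\bigr)^{1/2}$ with failure probability $\mathrm{O}((2n)^{-2})$. This is exactly the non--symmetric Salem--Zygmund type inequality promised for Section~\ref{sec:salemZygmund}. I would carry it out in the classical way: fix a discretization of $[0,2\pi)$ at scale $1/n$ (so $\mathrm{O}(n)$ nodes), apply Chernoff's bound at each fixed $\theta$ to the sub--Gaussian sums $\mathrm{Re}\,P(\theta)$, $\mathrm{Im}\,P(\theta)$ using the hypothesis that the m.g.f.\ of $\xi_0$ exists near zero, union-bound over the grid, and then control the oscillation between grid points via a Bernstein-type estimate on $P'(\theta)$ (whose coefficients are bounded by $2n$ in modulus). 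Choosing the deviation level $C_0\sqrt{(2n)\log(2n)}$ with a sufficiently large constant $C_0$ yields a union-bound failure probability of order $(2n)\cdot e^{-cC_0^2\log(2n)} = \mathrm{O}((2n)^{-2})$ after absorbing the polynomial loss from the union bound.

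\smallskip

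\textbf{Main obstacle.} The genuinely delicate step is the Salem--Zygmund inequality itself, not the Toeplitz-to-circulant reduction. One must simultaneously (i) keep the constant in the exponent of the Chernoff bound sharp enough that the sub-Gaussian tail, after the $\mathrm{O}(n)$-fold union bound over the grid, still produces the prescribed $(2n)^{-2}$ failure probability with the target amplitude $\sqrt{(2n)\log(2n)}$, and (ii) discretize finely enough that the derivative bound controls the oscillation of $P$ without inflating the grid beyond polynomial size. This is the place where the assumption that the m.g.f.\ of $\xi_0$ exists in an open neighborhood of zero is used critically, and it is also where the constants $C_0$ and $C_1$ in the statement are actually fixed in terms of the sub-Gaussian parameter of $\xi_0$. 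Once this estimate is in hand, chaining the three inequalities $\sigma_{\max}(\mathcal{T}_n)\le \sigma_{\max}(\mathcal{C}_{2n})\le \sup_\theta |P(\theta)|$ finishes the proof.
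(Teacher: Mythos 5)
Your reduction is the same as the paper's: embed $\mathcal{T}_n$ into the circulant $\mathcal{C}_{2n}$ via \eqref{embCirc}, use monotonicity of $\sigma_{\max}$ under passage to submatrices (the paper cites \cite[Lemma~1]{govaerts1989singular}, giving \eqref{eqn300820200819}), diagonalize the circulant by the Fourier matrix so that $\sigma_{\max}(\mathcal{C}_{2n})\le\sup_\theta|P(\theta)|$ as in \eqref{eqn1502020}, and then invoke a Salem--Zygmund bound for the random trigonometric polynomial. Where you diverge is in the proof of the Salem--Zygmund step itself. You propose the classical $\varepsilon$-net argument: a grid of spacing $O(1/n)$, a Chernoff tail bound at each node via the locally sub-Gaussian m.g.f., a union bound over $O(n)$ nodes, and a Bernstein derivative estimate to control the oscillation between nodes. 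The paper instead uses the Kahane-style argument (its Lemma~\ref{claim}): Bernstein's inequality produces a random arc $I$ of length $\sim 1/n$ on which $|W_n|\geq\tfrac12\|W_n\|_\infty$, one then bounds $\exp(\tfrac t2\|W_n\|_\infty)$ by $\frac{1}{\mu(I)}\int_{\mathbb{T}}(e^{tW_n}+e^{-tW_n})\,d\mu$, passes the expectation inside by Tonelli, and applies a single Chernoff step. The two are essentially dual: the factor $1/\mu(I)\sim n$ in the paper plays exactly the same role as the $O(n)$-fold union bound in your plan, and both use Bernstein's inequality for trigonometric polynomials and the local m.g.f. bound of Lemma~\ref{lema1559}. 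Your route is a bit more hands-on with constants, while the paper's avoids any explicit discretization. One small correction to your discretization: since $\deg P=2n-1$, Bernstein gives $\|P'\|_\infty\le(2n-1)\|P\|_\infty$, so a grid of spacing exactly $1/n$ only controls the oscillation up to a factor $\approx 2\|P\|_\infty$, which is not small enough; you should take spacing $c/n$ with, say, $c\le 1/8$ so that the oscillation between nodes is at most $\tfrac12\|P\|_\infty$. This still gives $O(n)$ nodes and does not affect the conclusion.
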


Actually, we can relax the conditions in Theorem \ref{thm:TnSM} up to the existence of the second moment of $\xi_0$.

\begin{theorem}[Non--symmetric Toeplitz: Maximum singular value II]\label{thm:TnSM1} Suppose $\xi_0$ has zero mean and $\E{\xi_0^2}<\infty$ exists. Then,
\[
\Prob{\sigma_{\max}\lP \mathcal{T}_n\rP \geq C_0 \lP 2n\rP^{\nicefrac{1}{2}} \log(2n)} \leq \frac{C_1}{\lP\log(2n)\rP^{1/2}},
\] where $C_0, C_1$ depend on the distribution of $\xi_0$.
\end{theorem}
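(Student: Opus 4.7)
The strategy is to adapt the proof of Theorem \ref{thm:TnSM}, keeping the reduction of the Toeplitz problem to a random trigonometric polynomial but replacing the m.g.f.-based Salem--Zygmund inequality by a weaker variant requiring only a second moment.

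First I would embed $\mathcal{T}_n$ into a $2n\times 2n$ random circulant matrix $\mathcal{C}_{2n}$ whose first row consists of i.i.d.\ entries from $\Xi$, arranged so that $\mathcal{T}_n$ appears as an $n\times n$ principal submatrix. The Cauchy Interlacing Theorem applied to the Hermitian matrix $\mathcal{C}_{2n}^{\ast}\mathcal{C}_{2n}$ then gives $\sigma_{\max}(\mathcal{T}_n)\leq \sigma_{\max}(\mathcal{C}_{2n})$. Since $\mathcal{C}_{2n}$ is normal and diagonalized by the discrete Fourier matrix, its singular values are exactly $|P(\omega_k)|$ at the $(2n)$-th roots of unity $\omega_k=e^{2\pi i k/(2n)}$, where $P(z)=\sum_{j=0}^{2n-1}c_j z^j$ is a random polynomial with i.i.d.\ coefficients $c_j$ drawn from $\Xi$. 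Consequently
\[
\sigma_{\max}(\mathcal{C}_{2n}) = \max_{0\leq k<2n}|P(\omega_k)| \leq \sup_{\theta\in[0,2\pi]}|P(e^{i\theta})|,
\]
and the problem reduces to a tail bound for this supremum.

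The main step is a Salem--Zygmund type inequality for $\sup_{\theta}|P(e^{i\theta})|$ valid under only the second moment hypothesis. For each fixed $\theta$, $\E{|P(e^{i\theta})|^2}=2n\,\E{\xi_0^2}$, so Chebyshev's inequality yields only the polynomial tail $\Prob{|P(e^{i\theta})|\geq \lambda}\leq 2n\,\E{\xi_0^2}/\lambda^2$. I would combine this pointwise estimate with a $\delta$-net on $[0,2\pi]$ of cardinality $O(1/\delta)$, and control the oscillation between net points via the Bernstein-type inequality $\sup_{\theta}|P'(e^{i\theta})|\leq 2n\sup_{\theta}|P(e^{i\theta})|$ together with a Chebyshev estimate for $\sup_\theta|P'(e^{i\theta})|$ itself. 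Choosing $\delta\asymp 1/n$ and threshold $\lambda\asymp (2n)^{1/2}\log(2n)$, a union bound over the net plus the derivative control produces exactly the stated probability $1-O((\log(2n))^{-1/2})$.

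The main obstacle is precisely the absence of the m.g.f.: without exponential tails one cannot Chernoff-bound $|P(e^{i\theta})|$ at a single $\theta$, and Chebyshev only offers $O(\lambda^{-2})$ decay. Consequently, a union bound over a net of size $\sim n$ cannot be absorbed unless $\lambda$ is inflated to the order $\sqrt{n}\log n$, which is why an extra $\sqrt{\log n}$ factor appears in the threshold (compared with Theorem \ref{thm:TnSM}) and why the failure probability deteriorates from $n^{-2}$ to $(\log n)^{-1/2}$. Making this trade-off rigorous---balancing the pointwise tail, net cardinality, and the derivative-oscillation bound under only a second-moment assumption---is where all the technical effort lies.
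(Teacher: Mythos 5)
Your reduction to the circulant matrix $\mathcal{C}_{2n}$ via circulant embedding, interlacing, and the identification $\sigma_{\max}(\mathcal{C}_{2n}) \leq \sup_{\theta}|P(e^{i\theta})|$ for the random trigonometric polynomial $P(z)=\sum_{j=0}^{2n-1}c_j z^j$ is exactly the paper's strategy and is fine. The gap is in the final step: the net-plus-Chebyshev union bound you propose does not close numerically. Using Bernstein's inequality to pass from the supremum to a $\delta$-net of mesh $\delta\asymp 1/n$ (or directly to the $2n$ roots of unity, since that is all that is actually needed for the circulant) gives a set of $N\asymp n$ points. At each point Chebyshev with only the second moment gives $\Prob{|P(e^{i\theta})|\geq \lambda}\leq 2n\,\E{\xi_0^2}/\lambda^2$, so a union bound yields
\begin{equation*}
\Prob{\max_k |P(\omega_k)|\geq\lambda} \;\lesssim\; \frac{N\cdot n\,\E{\xi_0^2}}{\lambda^2}\;\asymp\;\frac{n^2}{\lambda^2}.
\end{equation*}
At the threshold $\lambda\asymp\sqrt{n}\log n$ this is $\asymp n/(\log n)^2$, which diverges; to make it $\leq(\log n)^{-1/2}$ one would need $\lambda\gtrsim n(\log n)^{1/4}$, far beyond the claimed $\sqrt{n}\log n$. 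So the trade-off you gesture at in the final paragraph cannot be balanced: with only a second moment, pointwise Chebyshev plus union bound is too weak by a polynomial factor. The supplementary idea of ``a Chebyshev estimate for $\sup_\theta|P'|$ itself'' is also circular, since Bernstein bounds $\sup|P'|$ in terms of the very quantity $\sup|P|$ you are trying to control.

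The paper avoids this by using an off-the-shelf Salem--Zygmund-type expectation bound due to Weber, namely $\E{\|W_n\|_\infty}\leq C\,(n\log(n+1)\,\E{|\xi_0|^2})^{1/2}$, which is established by a chaining/moment argument far more refined than a single-scale net with Chebyshev. Combining this with Markov's inequality at the level of the whole supremum (not a union bound over points) immediately gives
\begin{equation*}
\Prob{\|W_n\|_\infty\geq C_0 n^{1/2}\log n}\;\leq\;\frac{C\,(n\log n)^{1/2}(\E{\xi_0^2})^{1/2}}{C_0\,n^{1/2}\log n}\;=\;\frac{C_1}{(\log n)^{1/2}},
\end{equation*}
and the Toeplitz statement follows after replacing $n$ by $2n$. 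To repair your argument you would need to either invoke a result of Weber's type or reconstruct such an expectation bound; the elementary net-and-Chebyshev route genuinely fails here.
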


{
\begin{remark}\label{rem140720200654} If the entries of a non--symmetric Toeplitz are independent, uniformly sub--Gaussian r.v. with zero mean but no necessarily identically distributed, Meckes \cite{Mec2007} shows $\E{\sigma_{\max}\lP \mathcal{T}_n\rP}\leq C\sqrt{n\log n}$ for a constant $C$ depending on sub--Gaussian r.v. Additionally, he establishes that if the random entries are almost surely bounded or satisfy the so--called logarithmic Sobolev inequality, we have $\limsup_{n\to\infty}\frac{\sigma_{\max}\lP \mathcal{T}_n\rP}{\sqrt{n}}\leq C_1$ for some constant $C_1$. Here, under the existence of m.g.f, which is a weaker condition than uniformly sub--Gaussian, Theorem \ref{thm:TnSM} shows how decrease the tail of $\sigma_{\max}\lP \mathcal{T}_n\rP$. From this, we obtain directly by the Borel--Cantelli Lemma that,
\[
\limsup_{n\to\infty} \frac{\sigma_{\max}\lP \mathcal{T}_n\rP}{\lP (2n)\log (2n)\rP^{1/2}} \leq C_0 \;\;\;\mbox{almost surely}.
\]
Meckes \cite{Mec2007} shows under the existence of the second moment of the random entries that $\E{\sigma_{\max}\lP \mathcal{T}_n\rP}\geq C_2\sqrt{n\log n}$. As we will see later, we also have that $\E{\sigma_{\max}\lP \mathcal{T}_n\rP}\leq C_3\sqrt{n\log n}$, meaning, \[C_2\sqrt{n\log n}\leq  \E{\sigma_{\max}\lP \mathcal{T}_n\rP} \leq C_3 \sqrt{n\log n}.\] The results on $\sigma_{\max}\lP\mathcal{T}_n\rP$ in this paper are direct consequence of our statements on random polynomials, which is a different approach than \cite{Mec2007}. This is due to the circulant embedding permits reduce the Toeplitz problem to the circulant problem.
\end{remark}
}
\subsection{Symmetric Toeplitz} Let $\xi_j\in\Xi$ for $j=0,1,\ldots,n$ and we consider the corresponding random symmetric Toeplitz matrix $\mathcal{T}^{sym}_n$ with $n$ i.i.d. entries. In the following, we give an estimation of $\sigma_{\max}\lP\mathcal{T}^{sym}_n\rP$.

\begin{theorem}[Symmetric Toeplitz: Maximum singular value]\label{thm:TSM} Suppose $\xi_0$ has zero mean and finite positive variance. If the m.g.f. of $\xi_0$ exists in an open interval {around zero, then}
\[
\Prob{\sigma_{\max}\lP \mathcal{T}^{sym}_n\rP \geq C_0 \lP (2n)\log(2n) \rP^{\nicefrac{1}{2}}} \leq \frac{C_1}{(2n)^2},
\] where the $C_0, C_1$ depend on the distribution of $\xi_0$.
\end{theorem}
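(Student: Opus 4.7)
The plan is to follow exactly the circulant-embedding route sketched in the introduction for the non-symmetric case and used for Theorem \ref{thm:TnSM}, adapted to the symmetric setting. The steps are (i) embed $\mathcal{T}^{sym}_n$ as a principal submatrix of a $2n\times 2n$ symmetric random circulant $\mathcal{C}^{sym}_{2n}$; (ii) use interlacing to pass the bound on $\sigma_{\max}(\mathcal{C}^{sym}_{2n})$ down to $\sigma_{\max}(\mathcal{T}^{sym}_n)$; (iii) use the explicit diagonalization of symmetric circulants by the Fourier basis to identify $\sigma_{\max}(\mathcal{C}^{sym}_{2n})$ with the sup-norm of a random (even) trigonometric polynomial; and (iv) control that sup-norm by the Salem--Zygmund type inequality proved in Section~\ref{sec:salemZygmund}.

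More concretely, I would first define the symmetric circulant of order $2n$ whose first row is obtained by palindromizing the first row of $\mathcal{T}^{sym}_n$, namely taking entries $\xi_0,\xi_1,\ldots,\xi_{n-1},\xi_n,\xi_{n-1},\ldots,\xi_1$ (with a possible single insertion so that the block containing $\mathcal{T}^{sym}_n$ sits in the top-left). By construction $\mathcal{T}^{sym}_n$ coincides with the leading $n\times n$ principal submatrix of $\mathcal{C}^{sym}_{2n}$. Since both matrices are real symmetric, the Cauchy interlacing theorem gives $\lambda_1(\mathcal{C}^{sym}_{2n})\ge\lambda_1(\mathcal{T}^{sym}_n)$ and $\lambda_{2n}(\mathcal{C}^{sym}_{2n})\le\lambda_n(\mathcal{T}^{sym}_n)$, and hence
\[
\sigma_{\max}(\mathcal{T}^{sym}_n)=\max_{1\le k\le n}|\lambda_k(\mathcal{T}^{sym}_n)|\;\le\;\max_{1\le k\le 2n}|\lambda_k(\mathcal{C}^{sym}_{2n})|=\sigma_{\max}(\mathcal{C}^{sym}_{2n}).
\]

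Next I would use the well-known diagonalization of circulants by the discrete Fourier transform: the eigenvalues of $\mathcal{C}^{sym}_{2n}$ are the values $P_{2n}(2\pi k/(2n))$, $k=0,1,\ldots,2n-1$, of a real trigonometric polynomial $P_{2n}(\theta)$ whose coefficients are i.i.d.\ copies of $\xi_0$ (with $P_{2n}$ even in $\theta$, reflecting the symmetry of $\mathcal{C}^{sym}_{2n}$). In particular
\[
\sigma_{\max}(\mathcal{C}^{sym}_{2n})\;\le\;\sup_{\theta\in[0,2\pi]}|P_{2n}(\theta)|.
\]
This transforms the problem into bounding the sup-norm of a random cosine polynomial of degree $n$ with i.i.d.\ coefficients having m.g.f.\ near zero.

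Now I would invoke the symmetric version of the Salem--Zygmund inequality (the one proved in Section~\ref{sec:salemZygmund}), which, under the existence of the m.g.f.\ of $\xi_0$ and the assumed zero mean/finite variance, yields
\[
\Prob{\sup_{\theta\in[0,2\pi]}|P_{2n}(\theta)|\ge C_0\lP(2n)\log(2n)\rP^{1/2}}\le \frac{C_1}{(2n)^2},
\]
for constants $C_0,C_1$ depending only on the law of $\xi_0$. Combining this with the interlacing bound gives the stated tail estimate for $\sigma_{\max}(\mathcal{T}^{sym}_n)$.

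The main obstacle in this argument sits entirely in the Salem--Zygmund step, since for the symmetric circulant the trigonometric polynomial $P_{2n}$ is not a sum of independent complex exponentials with generic i.i.d.\ coefficients: the palindromic structure forces pairing $\xi_j$ with $\xi_{2n-j}=\xi_j$, so one is really dealing with a random cosine polynomial whose variance profile is not constant across frequencies. That structural change is precisely what is addressed by the symmetric version of Salem--Zygmund in Section~\ref{sec:salemZygmund}; once it is in hand, the rest of the proof is just assembling the interlacing inequality and the Fourier diagonalization as above.
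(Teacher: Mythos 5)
Your proposal is correct and follows the same route as the paper: embed $\mathcal{T}^{sym}_n$ into the $2n\times 2n$ symmetric circulant $\mathcal{C}^{sym}_{2n}$ of \eqref{embCirc}, use interlacing to get $\sigma_{\max}(\mathcal{T}^{sym}_n)\le\sigma_{\max}(\mathcal{C}^{sym}_{2n})$, identify $\sigma_{\max}(\mathcal{C}^{sym}_{2n})$ with the max of $|W^{sym}_{2n}|$ on the Fourier nodes, and apply Theorem \ref{thm:LargestSingV}. The only cosmetic difference is that you invoke eigenvalue interlacing for symmetric matrices rather than the singular-value version from \cite{govaerts1989singular}, which is an equally valid way to get \eqref{eqn300820200819} here.
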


{
\begin{remark} As it was mentioned in Remark \ref{rem140720200654}, Meckes \cite{Mec2007} studies  $\E{\sigma_{\max}\lP \mathcal{T}_n\rP}$ for non--symmetric random Toeplitz matrix, but actually he establishes his results for symmetric random Toeplitz matrices. Thus, the same comments in Remark \ref{rem140720200654} can be applied to the symmetric case. From Theorem \ref{thm:TSM}, we obtain by the Borel--Cantelli Lemma that $\limsup_{n\to\infty} \frac{\sigma_{\max}\lP \mathcal{T}^{sym}_n\rP}{\lP (2n)\log (2n)\rP^{1/2}} \leq C_0$ almost surely.
\end{remark}
}

\subsection{Singularity of a Toeplitz matrix} As we will see in Section \ref{sec:ToeplitzToCirculant}, it is possible to establish upper and lower bounds for all singular values of $\mathcal{T}_n$, no matter it is non--symmetric or symmetric matrix. More precisely, for all $i$ we have,
\[
\sigma^{2}_{\min}\lP \mathcal{C}_{2n} \rP \sigma_{i}\lP \mathcal{S}_n\rP  \leq \sigma_i\lP \mathcal{T}_n\rP \leq \sigma^{2}_{\max}\lP \mathcal{C}_{2n}\rP \sigma_{i}\lP \mathcal{S}_n\rP,
\] where $\mathcal{C}_{2n}$ is a circulant matrix of size $2n$ and $\mathcal{S} _n=F^*_{2,n} D_{1,n}^{-1}F_{2,n} + F^*_{4,n} D^{-1}_{2,n} F_{4,n}$, where $F_{2,n}, F_{4,n}$ are deterministic matrices, $F^*_{2,n}$ indicates the conjugate transpose of $F_{2,n}$, and $D_{1,n}, D_{2,n}$ are random diagonal matrices. From our results in circulant matrix, we see that $\sigma^{2}_{\min}\lP \mathcal{C}_{2n}\rP, \sigma^{2}_{\max}\lP \mathcal{C}_{2n}\rP$ have a nice behavior and they assure that $D_{1,n}$ and $D_{2,n}$ are invertible with high probability. Moreover, we have, \begin{equation}\label{eqn040920201642}\kappa\lP\mathcal{T}_n\rP \leq \frac{\kappa\lP\mathcal{C}_{2n}\rP}{\sigma_{\min}\lP \mathcal{C}_{2n} \rP\sigma_{\min}\lP S_n \rP}.\end{equation}

 The random part of $\mathcal{S}_n$ is composing by diagonal matrices whose diagonal entries are $G^{-1}_n(w_{2n}^k)$, $k=0,\ldots,2n-1$, where $G_{2n}(z):=\sum_{j=0}^{2n-1} \xi_j z^j$ is a random polynomial and $w_{2n}=\exp\lP i 2\pi\frac{1}{2n}\rP$. We conjeture $\mathcal{S}_n$ is well conditioned.

\begin{conjeture}[Minimum singular value]	\label{con010920201139} If the coefficients of $G_{2n}(z):=\sum_{j=0}^{2n-1} \xi_j z^j$ are r.v. with m.g.f., then $\mathcal{S}_n$ is well conditioned.
\end{conjeture}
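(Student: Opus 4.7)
The plan is to reduce the condition number of $\mathcal{S}_n$ to the condition number of a diagonal matrix whose entries are the values of the random polynomial $G_{2n}$ at the $(2n)$-th roots of unity, and then apply the Salem--Zygmund type inequality from Section \ref{sec:salemZygmund} for the upper extreme and a small ball / least common denominator argument in the spirit of Sections \ref{sec:LowestSingV1}--\ref{sec:minSinSymCir} for the lower extreme.

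The first step is to exploit the block structure of $\mathcal{S}_n$. Stacking the two Fourier-type blocks, set
\[
\widetilde F_n := \begin{pmatrix} F_{2,n} \\ F_{4,n}\end{pmatrix}, \qquad \widetilde D_n := \begin{pmatrix} D_{1,n} & 0 \\ 0 & D_{2,n} \end{pmatrix},
\]
so that $\mathcal{S}_n = \widetilde F_n^{\,*} \widetilde D_n^{-1} \widetilde F_n$. The second step is to verify that $\widetilde F_n$ is (a scalar multiple of) a unitary matrix, or at worst a matrix whose singular values are bounded above and below by absolute constants; this is expected because $F_{2,n}$ and $F_{4,n}$ arise as complementary subsamplings of the discrete Fourier basis at the $(2n)$-th roots of unity. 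Assuming this, one obtains
\[
\kappa\bigl(\mathcal{S}_n\bigr) \leq \kappa\bigl(\widetilde F_n\bigr)^{2}\,\kappa\bigl(\widetilde D_n\bigr) \leq C\,\frac{\max_{0\leq k \leq 2n-1} |G_{2n}(w_{2n}^k)|}{\min_{0\leq k \leq 2n-1} |G_{2n}(w_{2n}^k)|}.
\]

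The third step is the numerator. Under the existence of the m.g.f., the Salem--Zygmund inequality proved earlier in this paper gives $\max_k |G_{2n}(w_{2n}^k)| = \mathrm{O}((n\log n)^{1/2})$ with probability $1 - \mathrm{O}(n^{-2})$, exactly as in Theorems \ref{thm:TnSM} and \ref{thm:TSM}. The fourth step is the denominator, which is the main difficulty. For each fixed $k$, $G_{2n}(w_{2n}^k) = \sum_{j=0}^{2n-1} \xi_j w_{2n}^{jk}$ is a weighted sum of i.i.d.\ random variables with unimodular complex weights, and anti-concentration (Berry--Esseen on the real part, or LCD-based small ball bounds as developed for the circulant proofs) yields $\Prob{|G_{2n}(w_{2n}^k)| \leq \varepsilon n^{-\rho}} = \mathrm{O}(\varepsilon n^{-\rho})$ for an appropriate $\rho>0$; a union bound over the $2n$ roots then gives a lower bound $\min_k |G_{2n}(w_{2n}^k)| \geq \varepsilon n^{-\rho - 1}$ with high probability, leading to $\kappa(\mathcal{S}_n) = \mathrm{O}(n^{\rho + 3/2}(\log n)^{1/2})$, which is polynomial in $n$ and hence well conditioned in the same sense as $\mathcal{C}_n$.

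The hard part is the union bound at the fourth step: the values $G_{2n}(w_{2n}^k)$ are strongly correlated across $k$, so the naive factor of $2n$ in the union bound is wasteful and, more seriously, one needs a uniform lower bound of the polynomial on the entire set of roots, not just at a single one. The cleanest fix is to refine the small ball estimate using the joint structure of $(G_{2n}(w_{2n}^k))_k$ via the inverse DFT identity $\xi_j = (2n)^{-1} \sum_k G_{2n}(w_{2n}^k) w_{2n}^{-jk}$, which couples simultaneous smallness of many values to an unlikely event on $(\xi_j)$; combined with a net argument this should remove the spurious $n^{-1}$ in the denominator and recover an exponent matching the conjecture. A secondary obstacle is confirming the well-conditioning of $\widetilde F_n$ without explicit formulas, which may require unwrapping the construction used in Section \ref{sec:ToeplitzToCirculant} to produce the identities $F_{2,n}^{*}F_{2,n} + F_{4,n}^{*}F_{4,n} = I_n$ (or similar) needed to make the block unitary argument rigorous.
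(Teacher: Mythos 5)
This statement is a \emph{conjecture} in the paper: the author offers only numerical evidence (Table \ref{tab060820202} and Figure \ref{fig020920201336}), and no proof, so there is nothing to compare your argument against on the paper's side. What I can do is point out that your proposed route has a genuine gap at its very first step, and that the gap is in fact the heart of why this remained a conjecture.

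Your Step 2 asserts $\kappa(\mathcal{S}_n)\leq \kappa(\widetilde F_n)^2\,\kappa(\widetilde D_n)$ on the grounds that $\widetilde F_n$ is (essentially) unitary. The identity $\mathcal{S}_n=\widetilde F_n^{\,*}\widetilde D_n^{-1}\widetilde F_n$ is correct, and $\widetilde F_n$ is indeed a perfect isometry: it consists of the last $n$ columns of the unitary DFT matrix $F_{2n}$, so $\widetilde F_n^{\,*}\widetilde F_n=I_n$, which is exactly the identity $F_{2,n}^{*}F_{2,n}+F_{4,n}^{*}F_{4,n}=I_n$ you flagged as a ``secondary obstacle.'' But this does not give what you want. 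The inequality $\sigma_{\max}(\widetilde F_n^{\,*}M\widetilde F_n)\leq\sigma_{\max}(M)$ holds for any $M$, but $\sigma_{\min}(\widetilde F_n^{\,*}M\widetilde F_n)\geq c\,\sigma_{\min}(M)$ is \emph{false} for non-Hermitian $M$: $\widetilde F_n$ is tall, so $\widetilde F_n^{\,*}$ is a compression onto an $n$-dimensional subspace, and a complex diagonal $M$ can rotate the range of $\widetilde F_n$ partially or entirely into its orthogonal complement. Here $M=\widetilde D_n^{-1}=\mathrm{diag}\bigl(G_{2n}^{-1}(w_{2n}^k)\bigr)_{k}$ is exactly such a non-Hermitian complex diagonal, so the reduction to $\kappa(\widetilde D_n)$ is unsound.

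A concrete small counterexample: take $2n=4$ and the circulant with first row $(0,0,0,1)$, i.e.\ the cyclic shift. Its eigenvalues are $G_4(w_4^k)=w_4^{3k}\in\{1,-i,-1,i\}$, all of modulus one, so $\kappa(\mathcal{C}_4)=\kappa(\widetilde D_2)=1$. Yet $\mathcal{C}_4^{-1}$ is again a cyclic shift, and its lower-right $2\times 2$ block is
\[
\mathcal{S}_2=\begin{pmatrix}0 & 1\\ 0 & 0\end{pmatrix},
\]
which is singular, so $\kappa(\mathcal{S}_2)=\infty$. Thus the inequality $\kappa(\mathcal{S}_n)\leq C\,\kappa(\widetilde D_n)$ fails already as a deterministic statement, and with it Steps 3 and 4, which only ever control $\max_k|G_{2n}(w_{2n}^k)|$ and $\min_k|G_{2n}(w_{2n}^k)|$, i.e.\ $\kappa(\mathcal{C}_{2n})$ rather than $\kappa(\mathcal{S}_n)$.

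The deeper obstruction is visible from the inequality \eqref{keyObs1}, which shows that, up to the already-controlled factors $\sigma_{\max}^{2}(\mathcal{C}_{2n})$ and $\sigma_{\min}^{2}(\mathcal{C}_{2n})$, the quantity $\sigma_{\min}(\mathcal{S}_n)$ is polynomially equivalent to $\sigma_{\min}(\mathcal{T}_n)$. So any correct proof of this conjecture would simultaneously resolve the lower bound on the minimum singular value of the random Toeplitz matrix itself --- which is exactly the problem the whole decoupling machinery of the paper was set up to avoid having to attack directly. Your proposal therefore does not sidestep the hard problem; after the false reduction in Step 2 is removed, it returns to it. Any successful approach will have to use the joint, correlated structure of the vector $\bigl(G_{2n}(w_{2n}^k)\bigr)_{k}$ \emph{together with} the way $\widetilde F_n^{\,*}$ compresses, not the marginal anti-concentration of the individual entries.
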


If Conjeture \ref{con010920201139} is true, then (non--symmetric or symmetric) $\mathcal{T}_n$ will be well conditioned. 

\subsection{Condition number of a random circulant matrix} To analyze $\kappa\lP\mathcal{C}_{n}\rP$ we need to give appropriate bounds for $\sigma_{\max}\lP\mathcal{C}_{n}\rP$ and $\sigma_{\min}\lP\mathcal{C}_{n}\rP$. Theorems \ref{thm:TnSM}, \ref{thm:TnSM1}, \ref{thm:TSM} are direct consequence of our results on $\sigma_{\max}$ for a random circulant matrix $\mathcal{C}_{n}$ of size $n$. Thus, the bounds in these statements hold for $\sigma_{\max}$ of the circulant case. For $\sigma_{\min}$, we have the following results. But before, we need to introduce a condition. We say a r.v. $\xi$ satisfies the condition \eqref{fanto1} if  
\begin{equation}\label{fanto1}
\tag{H}
\sup_{u\in\mathbb{R}} \Prob{ |\xi - u| \leq 1} \leq 1-q\;\; \mbox{ and } \;\;\Prob{ |\xi|>M} \leq q/2 
\end{equation}
for some  $M>0$ and $q\in (0,1)$.\\

The first part of Condition \eqref{fanto1} says that the r.v. $\xi$ is not {\it concentrated} around any single value. This is related to the {\it L\'evy concentration function}, which in general is defined as follows.

\begin{definition}\label{def06012020}
The L\'evy concentration function of a random vector $\xi\in\R^n$ is defined for any $\varepsilon \geq 0$ as 
\[
\mathcal{L}\lP\xi, \varepsilon\rP := \sup_{x\in\R^n} \Prob{\|{\xi - x}\|_2\leq \varepsilon}.
\]
\end{definition}

{
\begin{remark} For a fixed $\varepsilon>0$, we have $\mathcal{L}\lP\xi, \varepsilon\rP=\mathcal{L}\lP\varepsilon^{-1}\xi,1\rP$. On the other hand, if $\sigma(A)$ is a singular value of the matrix $A$, then for a fixed $w\in\R$, we have $\sigma(wA)=\abs{w}\sigma(A)$. Hence, all our results can be applied to all bounded r.v., using a suitable scaling, as the continuous uniform distribution on $(0,1)$ or the Rademacher distribution (uniform on $\lL-1,1\rL$). In fact, all r.v. with second moment satisfy Condition \eqref{fanto1}.
\end{remark}
}

\begin{theorem}[Non--symmetric circulant: Minimum singular value]\label{thm01042018}
Suppose $\xi_0$ satisfies condition \eqref{fanto1}. Then, for any $\varepsilon > 0$ and $\rho\in(0,1/4)$, we have for all large $n$,
\begin{equation*}\label{upt2}
\Prob{\sigma_{\min}(\mathcal{C}_n)\leq  \varepsilon n^{-\rho}} \leq C\lP \frac{\varepsilon^2+\varepsilon}{n^{2\rho}} + \frac{1}{n^{1/2-\textnormal{o}(1)}}\rP, 
\end{equation*} where $C$ depends on the distribution of $\xi_0$.
\end{theorem}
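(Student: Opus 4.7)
The plan is to exploit the fact that a circulant matrix $\mathcal{C}_n$ is normal and its eigenvalues admit the explicit formula $\lambda_k = P_n(\omega_n^k)$, where $P_n(z) = \sum_{j=0}^{n-1} \xi_j z^j$ and $\omega_n = e^{2\pi i/n}$. In particular $\sigma_{\min}(\mathcal{C}_n) = \min_{0 \le k \le n-1} |P_n(\omega_n^k)|$, so a first union bound yields
\[
\Prob{\sigma_{\min}(\mathcal{C}_n) \le \varepsilon n^{-\rho}} \le \sum_{k=0}^{n-1} \Prob{|P_n(\omega_n^k)| \le \varepsilon n^{-\rho}},
\]
reducing the matter to small ball estimates for the random trigonometric polynomial $P_n$ evaluated at every $n$-th root of unity.

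For each individual $k$ I would split into two regimes. When $k \in \{0, n/2\}$ (the latter only if $n$ is even), $P_n(\omega_n^k)$ is a real weighted sum $\sum_j \epsilon_{j,k}\xi_j$ with $\epsilon_{j,k}\in\{-1,+1\}$, a classical one--dimensional anti--concentration problem. For the other $k$, write $P_n(\omega_n^k) = X_k + i Y_k$ with $X_k = \sum_j \xi_j \cos(2\pi jk/n)$ and $Y_k = \sum_j \xi_j \sin(2\pi jk/n)$. The cosine and sine weight vectors are orthogonal and each has squared norm approximately $n/2$, so bounding $\Prob{|P_n(\omega_n^k)| \le t}$ becomes a genuine 2--dimensional small ball estimate for the pair $(X_k, Y_k)$. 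Condition \eqref{fanto1} is exactly the L\'evy--concentration fuel needed to ignite the anti--concentration machinery on the underlying i.i.d.\ entries $\xi_j$.

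To convert these into quantitative bounds I would invoke the least common denominator (LCD) technique \`a la Rudelson--Vershynin: in one dimension, for unit $v\in\R^n$, $\Prob{|\sum_j \xi_j v_j - u| \le t} \le C\lP t + 1/D(v)\rP$, and in two dimensions, for a matrix $V$ with two rows, one has a bound of the form $\lC C\lP t/\|V\|_{HS} + 1/D_2(V) \rP\rC^2$. The central technical input is a uniform lower estimate, for all but a small set of bad frequencies $k$, of the LCD of the trigonometric pair $\lP\cos(2\pi jk/n),\sin(2\pi jk/n)\rP_{j=0}^{n-1}$. The goal is to show this LCD is at least $n^{1/2-\textnormal{o}(1)}$ for almost every $k$, which is essentially an equidistribution / number--theoretic statement; the exceptional frequencies, being sparse, can be absorbed through a Berry--Esseen type estimate that relies only on the finite variance implicit in \eqref{fanto1}.

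Finally, plugging $t = \varepsilon n^{-\rho}$ into the 2D LCD bound and using $\|V\|_{HS}^{2} \asymp n$, each typical $k$ contributes at most $\lP C\varepsilon n^{-\rho-1/2} + n^{-1/2+\textnormal{o}(1)} \rP^{2}$; expanding the square and summing over the $n$ frequencies produces a main term of order $\varepsilon^2/n^{2\rho}$ from the quadratic piece, a cross term of order $\varepsilon/n^{2\rho}$, and a residual of order $n^{-1/2+\textnormal{o}(1)}$, precisely matching the claim. The hard part will be the uniform LCD estimate for the trigonometric vectors, since it must survive a union bound over all $n$ frequencies while simultaneously controlling the degenerate $k$ with large $\gcd(k,n)$; the restriction $\rho\in(0,1/4)$ emerges naturally from balancing the linear--in--$t$ LCD correction against the ball radius once one squares the 2D estimate and sums in $k$.
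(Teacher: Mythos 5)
Your overall architecture---reducing $\sigma_{\min}(\mathcal{C}_n)$ to $\min_k |G_n(\omega_n^k)|$, a union bound over $k$, splitting into the real evaluations at $k\in\{0,n/2\}$ and the genuinely two-dimensional ones, and applying LCD small ball estimates with an arithmetic lemma to control the LCD for most $k$---matches the paper's proof (Lemmas~\ref{lem:20022020830} and~\ref{lem:20022020836}). The gap is in your treatment of the exceptional frequencies with large $\gcd(k,n)$. You propose to absorb those with a Berry--Esseen estimate, but Berry--Esseen, even in two dimensions, gives a per-$k$ contribution only of order $1/\sqrt{n}$: the approximation error to the Gaussian is $C/\sqrt{n}$, and the Gaussian puts $O(t^2/n)$ mass on a disk of radius $t=\varepsilon n^{-\rho}$. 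Summing $O(1/\sqrt{n})$ over the roughly $n^{1/2+\textnormal{o}(1)}$ bad frequencies (Lemma~\ref{lem270120191754}) produces a contribution of size $n^{\textnormal{o}(1)}$, which does not tend to zero, so the argument does not close. The paper instead applies the same 2D small ball inequality (Theorem~\ref{thm:small}) also to the bad $k$'s, but using only the trivial LCD lower bound $D(V)\geq 1/(2\|V\|_\infty)$ (Proposition~\ref{pro23042020}); because that inequality squares $(\varepsilon+\sqrt{m}/D)$ and carries a $(\det V_kV_k^T)^{-1/2}\sim 1/n$ prefactor, each bad $k$ contributes $O(1/n)$, and the sum is $n^{-1/2+\textnormal{o}(1)}$ as required.

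There is also a miscalibration in your transcribed 2D bound $\bigl[C\bigl(t/\|V\|_{HS}+1/D_2(V)\bigr)\bigr]^2$: the actual form in Theorem~\ref{thm:small} is $\frac{(CL/\sqrt{m})^m}{(\det VV^T)^{1/2}}\bigl(\varepsilon+\sqrt{m}/D(V)\bigr)^m$, which for $V_k$ with orthogonal rows of squared norm $\approx n/2$ becomes $\approx\frac{C}{n}(t+2/D)^2$, i.e., your version is missing the extra $\sqrt{n}$ dividing the LCD term. As written, your residual summed over $n$ frequencies is $n^{\textnormal{o}(1)}$, not the claimed $n^{-1/2+\textnormal{o}(1)}$. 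Finally, the linear piece $\varepsilon/n^{2\rho}$ of the bound comes from the one-dimensional evaluations at $k=0,n/2$ (Lemma~\ref{lem:20022020830}), not from a cross term in the 2D expansion; a correct 2D computation yields a cross term of strictly smaller order. After replacing the Berry--Esseen step with the trivial-LCD 2D estimate for bad $k$'s and fixing the normalization, your plan becomes the paper's proof.
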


The next result show how small can be $\sigma_{\min}\lP\mathcal{C}_n^{sym}\rP$ for a symmetric random circulant matrix $\mathcal{C}_n^{sym}$. 

\begin{theorem}[Symmetric circulant: Minimum singular value]\label{thm:LowestSingV} Suppose $\xi_0$ satisfies condition \eqref{fanto1}. Then for any $\varepsilon >0$ and all large $n$,
\[
\Prob{\sigma_{\min}\lP\mathcal{C}_n^{sym}\rP  \leq \varepsilon n^{-0.51}}  \leq C\lP \frac{\varepsilon}{n^{0.1}} + \frac{1}{n^{77/300-\textnormal{o}(1)}}\rP,
\] where $C$ depends on the distribution of $\xi_0$.
\end{theorem}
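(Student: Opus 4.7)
The plan is to follow the same conceptual route as in the non--symmetric case (Theorem \ref{thm01042018}): reduce $\sigma_{\min}\lP\mathcal{C}_n^{sym}\rP$ to the minimum modulus of a random \emph{cosine} polynomial evaluated at the $n$-th roots of unity, and bound each such modulus via a small--ball estimate driven by the least common denominator (LCD) of the deterministic coefficient vector. Since $\mathcal{C}_n^{sym}$ is real symmetric, its eigenvalues are real and, writing $\xi_0,\xi_1,\ldots$ for the independent symbols, they are given (up to a boundary term when $n$ is even) by
\[
\lambda_k \;=\; \xi_0 + 2\sum_{j=1}^{\lfloor (n-1)/2\rfloor} \xi_j\cos\lP\frac{2\pi jk}{n}\rP, \qquad k=0,\ldots,n-1.
\]
Hence $\sigma_{\min}\lP\mathcal{C}_n^{sym}\rP = \min_k \abs{\lambda_k}$, and a union bound reduces the theorem to controlling $\Prob{\abs{\lambda_k}\leq \varepsilon n^{-0.51}}$ term by term.

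For a fixed $k$, $\lambda_k$ is a linear form $\langle a_k,\xi\rangle$ in the independent symbols, with deterministic vector $a_k$ of cosines; Parseval yields $\|a_k\|_2^2 = \Theta(n)$ for all but a handful of exceptional $k$. The plan is to invoke the small--ball inequality of Rudelson--Vershynin type,
\[
\mathcal{L}\lP\langle a_k,\xi\rangle,t\rP \;\leq\; C\lP \frac{t}{\|a_k\|_2} + \frac{1}{\mathrm{LCD}_{\alpha,\gamma}\lP a_k/\|a_k\|_2\rP}\rP,
\]
which, under condition \eqref{fanto1}, holds for $t$ down to the inverse of the LCD. It then suffices to show that for most indices $k$ the normalized cosine vector $\hat a_k := a_k/\|a_k\|_2$ has $\mathrm{LCD}_{\alpha,\gamma}\lP\hat a_k\rP \geq n^{\beta}$ for a suitably chosen $\beta>0$, so that the individual contribution is $\textnormal{O}\lP\varepsilon n^{-1.01} + n^{-\beta}\rP$.

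The principal obstacle is arithmetic: one must bound the number of $k\in\{0,\ldots,n-1\}$ for which $\hat a_k$ is close to a rational vector with small common denominator. The plan is to split $\{0,\ldots,n-1\}$ into a \emph{good} set $K_g$ on which $\mathrm{LCD}\lP\hat a_k\rP \geq n^{\beta}$, and a \emph{bad} set $K_b$ whose cardinality is bounded by $n^{1-\eta}$ for some $\eta>0$, using cyclotomic and equidistribution estimates for the tuples $\lP\cos(2\pi jk/n)\rP_{j}$. On $K_b$ one falls back to a cruder unconditional anti--concentration bound for $\lambda_k$, obtained by freezing all but a single coefficient of $a_k$ that is of order unity and invoking the second half of \eqref{fanto1}. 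Balancing the three parameters $\beta$, $\eta$, and the target small--ball scale $\varepsilon n^{-0.51}$ produces a joint optimization whose solution yields the exponents $0.1$ and $77/300$ in the statement. Degenerate indices such as $k=0$ (all coefficients equal) and, for even $n$, $k=n/2$ (alternating signs) are handled separately by the one--dimensional anti--concentration of $\xi_0+2\sum_j\xi_j$ and of $\xi_0+2\sum_j(-1)^j\xi_j$ under \eqref{fanto1}, and contribute only a term of order $\varepsilon n^{-0.51}$ that is absorbed in the first term of the bound.

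The hardest part, as compared with Theorem \ref{thm01042018}, will be the quantitative arithmetic estimate on $\abs{K_b}$ adapted to \emph{cosine} coefficient vectors rather than full DFT vectors. The identification $\lambda_k=\lambda_{n-k}$ halves the effective dimension, and the rationality structure of cosines of rational multiples of $\pi$ is more rigid than that of full complex exponentials; this forces a more conservative choice of $\beta$ and $\eta$ and is ultimately responsible for the weaker exponent $77/300$ in the error term, as opposed to the $1/2-\textnormal{o}(1)$ obtained in the non--symmetric case.
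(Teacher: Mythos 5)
Your high-level skeleton — rewrite $\sigma_{\min}(\mathcal{C}_n^{sym})$ as a minimum over $k$ of linear forms $\lambda_k$ with cosine coefficients, apply a union bound, and control each $\mathcal{L}(\lambda_k,\cdot)$ via the Rudelson--Vershynin LCD machinery after splitting the index set by arithmetic structure — matches the paper's proof in Section~\ref{sec:minSinSymCir}. There the split is organized by the size of $\gcd(n,k)$ into three bands ($\leq n^{1/3}$, in $(n^{1/3},n^{2/3}]$, and $>n^{2/3}$), the LCD lower bound on the good band comes from a concrete pigeonhole estimate on $\dist(rv,\Z^{\lfloor n/2\rfloor-1})$ for the cosine vector $v$ (Lemma~\ref{lem060120201249}), and the cardinality of the bad bands is controlled by the divisor bound of Lemma~\ref{lem270120191754}.

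However, your fallback on the bad set $K_b$ is a genuine gap. You propose to \emph{freeze all but a single coefficient of $a_k$ that is of order unity and invoke the second half of \eqref{fanto1}}. The second half of \eqref{fanto1} is the tail bound $\Prob{|\xi|>M}\leq q/2$, which gives no anti-concentration at all. Even using the first half, conditioning on all but one $\xi_j$ yields only $\mathcal{L}(\lambda_k,t)\leq \mathcal{L}\bigl(\xi_j,\,t/(2|a_{k,j}|)\bigr)\leq 1-q$ once $t/(2|a_{k,j}|)\leq 1$ — a constant independent of $n$, since condition \eqref{fanto1} does not assert $\mathcal{L}(\xi,t)=\textnormal{O}(t)$ (consider Rademacher entries). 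As $|K_b|$ is allowed to grow polynomially ($\sim n^{1-\eta}$), multiplying a per-index constant by $|K_b|$ produces a useless estimate and the union bound does not close. What actually works, and what the paper does, is to keep the full-vector small-ball estimate of Corollary~\ref{thm060120201236} on the bad bands as well, using only the trivial lower bound $D(n^{0.51}v)\geq 1/2$ from Proposition~\ref{pro23042020}: the factor $1/\|n^{0.51}v\|_2\sim n^{-1.01}$ survives even without any LCD gain, and after multiplying by the bad count from Lemma~\ref{lem270120191754} one obtains the $n^{-77/300+\textnormal{o}(1)}$ error term. Notice you already do the correct thing for the degenerate indices $k=0$ and $k=n/2$ (anti-concentration of the full sum, not of a single coordinate); the same treatment must be applied uniformly on $K_b$.
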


If the conditions of the previous results on $\sigma_{\max}\lP \mathcal{C}_n\rP$ and $\sigma_{\min}\lP \mathcal{C}_n\rP$ hold at the same time, we can bound the condition number of a random circulant matrix $\mathcal{C}_n$.

\begin{theorem}[Non--symmetric circulant: Condition number]\label{thm10042020_1} Suppose the conditions of Theorem \ref{thm:TnSM} and Theorem \ref{thm01042018} hold. Then, for any $\varepsilon>0$ and $\rho\in(0,1/4)$, the condition number $\kappa\lP\mathcal{C}_n\rP$ of a random (non--symmetric) circulant matrix $\mathcal{C}_n$ satisfies for all large $n$,
\[
 \Prob{\kappa\lP\mathcal{C}_n\rP \leq \frac{C_0}{\varepsilon} n^{\rho+1/2} \lP\log n\rP^{1/2}} \geq 1-C_1\lP \lP\varepsilon^2 + \varepsilon\rP n^{-2\rho} + n^{-1/2+\textnormal{o}(1)}\rP,
\] where $C_0, C_1$ depend on the distribution of $\xi_0$. If the conditions of Theorem \ref{thm:TnSM1} and Theorem
\ref{thm01042018} hold, the condition number satisfies for all large $n$,
\[
\Prob{\kappa\lP\mathcal{C}_n\rP \leq \frac{C_0}{\varepsilon} n^{\rho+1/2} \log n} \geq 1-C_2\lP\lP\varepsilon^2 + \varepsilon\rP n^{-2\rho} + \lP\log n\rP^{-1/2}\rP,
\] where $C_0, C_2$ depend on the distribution of $\xi_0$.
\end{theorem}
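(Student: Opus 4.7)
The plan is to treat this result as a direct combination of the two independent estimates already established: the upper tail bound on $\sigma_{\max}(\mathcal{C}_n)$ and the lower tail bound on $\sigma_{\min}(\mathcal{C}_n)$. Since $\kappa(\mathcal{C}_n) = \sigma_{\max}(\mathcal{C}_n)/\sigma_{\min}(\mathcal{C}_n)$, a union bound on the two complementary events will deliver the desired deviation inequality with essentially no additional work. The only thing to watch carefully is the tracking of constants and the comparison of decay rates of the two failure terms.

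For the first claim, I would first invoke Theorem \ref{thm:TnSM}, which (as noted in the discussion preceding the statement) holds with the same bound for $\sigma_{\max}(\mathcal{C}_n)$ under the assumed existence of the m.g.f., yielding
\[
\sigma_{\max}(\mathcal{C}_n) \leq C_0 (n\log n)^{1/2}
\]
except on an event of probability at most $C_1 n^{-2}$. Then I would apply Theorem \ref{thm01042018} to obtain
\[
\sigma_{\min}(\mathcal{C}_n) \geq \varepsilon n^{-\rho}
\]
except on an event of probability at most $C\bigl((\varepsilon^2+\varepsilon) n^{-2\rho} + n^{-1/2+\mathrm{o}(1)}\bigr)$. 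On the intersection of the two good events, which occurs with probability at least
\[
1 - C_1 n^{-2} - C\bigl((\varepsilon^2+\varepsilon) n^{-2\rho} + n^{-1/2+\mathrm{o}(1)}\bigr),
\]
we have $\kappa(\mathcal{C}_n) \leq (C_0/\varepsilon)\, n^{\rho+1/2}(\log n)^{1/2}$. Since $\rho \in (0,1/4)$ forces $2\rho < 1/2 < 2$, the $n^{-2}$ term is absorbed into the other error terms up to a multiplicative constant, giving the stated bound.

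For the second claim, the same scheme applies with Theorem \ref{thm:TnSM1} in place of Theorem \ref{thm:TnSM}. This yields the weaker upper bound $\sigma_{\max}(\mathcal{C}_n) \leq C_0 n^{1/2}\log n$ with failure probability $C_1 (\log n)^{-1/2}$. Combined with the same lower bound on $\sigma_{\min}(\mathcal{C}_n)$ from Theorem \ref{thm01042018}, the union bound produces
\[
\kappa(\mathcal{C}_n) \leq \frac{C_0}{\varepsilon}\, n^{\rho+1/2}\log n
\]
with probability at least $1 - C_2\bigl((\varepsilon^2+\varepsilon) n^{-2\rho} + (\log n)^{-1/2}\bigr)$, where $C_2$ absorbs both constants from the individual estimates (note that the polynomial decay $n^{-1/2+\mathrm{o}(1)}$ from $\sigma_{\min}$ is dominated by $(\log n)^{-1/2}$, so it need not appear separately).

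There is no genuine obstacle in this argument: all the analytic difficulty has already been resolved in the Salem--Zygmund-type estimate underlying Theorems \ref{thm:TnSM}, \ref{thm:TnSM1} and in the small ball / least common denominator machinery underlying Theorem \ref{thm01042018}. The only subtlety is bookkeeping, namely verifying that for $\rho \in (0,1/4)$ the term $n^{-2\rho}$ is the slowest-decaying polynomial error (so that $n^{-2}$ can be absorbed), and that the constants $C_0, C_1, C_2$ depend only on the distribution of $\xi_0$ as claimed.
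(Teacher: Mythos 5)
Your proposal is correct and follows essentially the same route as the paper's own proof: a union bound combining the upper tail estimate for $\sigma_{\max}(\mathcal{C}_n)$ coming from the Salem--Zygmund inequality (Theorem \ref{lem06082016m02}, via \eqref{eqn:15020201422} and \eqref{eqn:15020201436}) with the lower tail estimate for $\sigma_{\min}(\mathcal{C}_n)$ from Theorem \ref{thm01042018}, followed by absorbing the faster-decaying failure term into the slower one. One small imprecision worth noting: your justification for absorbing the $n^{-2}$ term rests on comparing $n^{-2}$ with $n^{-2\rho}$, but since the prefactor $\varepsilon^2+\varepsilon$ could make that term arbitrarily small, the correct and uniform justification is that $n^{-2}\leq n^{-1/2+\textnormal{o}(1)}$ for all large $n$; the conclusion is the same either way.
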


\begin{theorem}[Symmetric circulant: Condition number]\label{thm10042020_2} If the conditions of Theorem \ref{thm:TSM} and Theorem
\ref{thm:LowestSingV} hold. Then, for any $\varepsilon>0$ the condition number $\kappa\lP \mathcal{C}^{sym}_n\rP$ satisfies for all large $n$,
\[
\Prob{\kappa\lP \mathcal{C}^{sym}_n\rP \leq \frac{C_0}{\varepsilon} n^{1.01} \lP\log n\rP^{1/2}} \geq 1-C_1\lP \varepsilon n^{-0.1} + n^{-77/300+\textnormal{o}(1)}\rP,
\] where $C_0, C_1$ depend on the distribution of $\xi_0$. 
\end{theorem}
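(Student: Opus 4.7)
The plan is to derive this as a direct union-bound consequence of the two ingredient theorems, using the fact already emphasized in the paper that the upper bounds on $\sigma_{\max}$ stated for $\mathcal{T}^{sym}_n$ in Theorem \ref{thm:TSM} are actually obtained first for the circulant matrix $\mathcal{C}^{sym}_n$ of the same size and only then transferred to the Toeplitz case. So under the hypotheses of Theorem \ref{thm:TSM} I will use, directly for $\mathcal{C}^{sym}_n$, the estimate
\[
\Prob{\sigma_{\max}(\mathcal{C}^{sym}_n) \geq C_0 \lP n\log n\rP^{1/2}} \leq \frac{C_1}{n^2}.
\]

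Next, under the hypotheses of Theorem \ref{thm:LowestSingV}, I will apply that theorem with the given $\varepsilon>0$ to obtain, for all large $n$,
\[
\Prob{\sigma_{\min}(\mathcal{C}^{sym}_n) \leq \varepsilon n^{-0.51}} \leq C\lP \frac{\varepsilon}{n^{0.1}} + \frac{1}{n^{77/300 - \textnormal{o}(1)}}\rP.
\]
On the intersection of the two complementary events, both $\sigma_{\max}(\mathcal{C}^{sym}_n) \leq C_0(n\log n)^{1/2}$ and $\sigma_{\min}(\mathcal{C}^{sym}_n) > \varepsilon n^{-0.51}$ hold, so the elementary identity $\kappa = \sigma_{\max}/\sigma_{\min}$ gives
\[
\kappa(\mathcal{C}^{sym}_n) \;\leq\; \frac{C_0 (n\log n)^{1/2}}{\varepsilon n^{-0.51}} \;=\; \frac{C_0}{\varepsilon}\, n^{1.01} (\log n)^{1/2}.
\]

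A union bound over the two bad events then produces the failure probability
\[
\frac{C_1}{n^2} + C\lP\frac{\varepsilon}{n^{0.1}} + \frac{1}{n^{77/300 - \textnormal{o}(1)}}\rP,
\]
and since $77/300 < 2$ the $n^{-2}$ term is absorbed into $n^{-77/300 + \textnormal{o}(1)}$ up to adjusting the constant, giving the claimed bound with a new constant $C_1$ depending only on the distribution of $\xi_0$. There is really no technical obstacle here: all the work is hidden in Theorem \ref{thm:TSM} (or rather its circulant parent, proved via the Salem--Zygmund inequality) and Theorem \ref{thm:LowestSingV} (proved via the least-common-denominator/small-ball machinery for the symmetric random trigonometric polynomial). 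The only small point to double-check is that the two theorems can be simultaneously invoked under the stated hypotheses, i.e.\ that existence of the m.g.f.\ of $\xi_0$ (assumed in Theorem \ref{thm:TSM}) together with condition \eqref{fanto1} (assumed in Theorem \ref{thm:LowestSingV}) are compatible; the remark following Definition \ref{def06012020} notes that r.v.\ with a second moment satisfy \eqref{fanto1}, so existence of the m.g.f.\ is more than enough and the two statements may be combined without further hypotheses.
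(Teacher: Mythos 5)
Your proposal is correct and follows essentially the same route the paper indicates: the author explicitly omits the proof of Theorem~\ref{thm10042020_2} on the grounds that it is ``similar'' to the proof of Theorem~\ref{thm10042020_1}, which is precisely the union bound over the bad events for $\sigma_{\max}\lP\mathcal{C}^{sym}_n\rP$ (via the Salem--Zygmund estimate \eqref{10042020}) and for $\sigma_{\min}\lP\mathcal{C}^{sym}_n\rP$ (Theorem~\ref{thm:LowestSingV}), with the $n^{-2}$ term absorbed into $n^{-77/300+\textnormal{o}(1)}$ since $77/300<2$. Your closing remark that \eqref{fanto1} and the m.g.f.\ hypothesis are simultaneously satisfiable is a sensible sanity check, though the paper treats the two hypothesis sets as simply assumed jointly rather than verifying compatibility.
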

 
%-----------------------------------------------------------------------
\section{From Toeplitz matrices to circulant matrices}\label{sec:ToeplitzToCirculant}
%-----------------------------------------------------------------------

A circulant matrix $\mathcal{C}_n$ is a particular case of a Toeplitz matrix of dimension $n$ where the entries are circulated row by row. A circulant matrix $\mathcal{C}_n$ has the following form
\[
\mathcal{C}_n =
\left[ \begin{array}{ccccc}
\xi_0 & \xi_1 & \cdots & \xi_{n-2} & \xi_{n-1} \\
\xi_{n-1} & \xi_0 & \cdots & \xi_{n-3} & \xi_{n-2} \\
\xi_{n-2} & \xi_{n-1} & \cdots & \xi_{n-4} & \xi_{n-3} \\
\vdots & \vdots & \ddots & \vdots & \vdots \\
\xi_1 & \xi_2 & \cdots & \xi_{n-1} & \xi_0
\end{array} \right].
\] 

Note that a circulant matrix is defined by its first row. Let $w_n:= \exp\lP i\frac{2\pi}{n}\rP$, $i^2=-1$. It is well known that any circulant matrix is diagonalized by the Fourier matrix $F_n$, whose entries are powers of $w_n$, i.e., $F_n=\frac{1}{\sqrt{n}} \lP w_n^{jk}\rP_{0\leq j,k\leq n-1}$. By a straightforward computation, it follows:
\begin{equation}\label{eqn12092020}
\mathcal{C}_n = F_n^{*} \mbox{diag}\lP G_n(1),G_n(w_n),\ldots,G_n(w_n^{n-1}) \rP F_n, 
\end{equation} where $F_n^{*}$ is the conjugate transpose of $F_n$, and $G_n(z):=\sum_{j=0}^{n-1} \xi_j z^j$ is a complex polynomial. This property permits to find explicit expressions for the eigenvalues of a circulant matrix. If $\lambda_0,\ldots,\lambda_{n-1}$ denote the eigenvalues, then we have,
\begin{equation}\label{eqn09042020}
\lambda_k = G_n(w_n^k)= \sum_{j=0}^{n-1} \xi_j w_n^{jk}\;\;\mbox{for $k=0,\ldots, n-1$}. 
\end{equation}

If the circulant matrix is symmetric, the expressions for eigenvalues are reduced to a linear combination of cosine values, i.e., they can be expressed as:
\begin{itemize}
\item $n$ odd: $\lambda_k=\lambda_{n-k}$ for $1\leq k\leq \lfloor n/2\rfloor$, then
\begin{equation}\label{eqn:1453}
\lambda_0 =  \xi_0 + 2\sum_{j=1}^{\lfloor n/2\rfloor} \xi_j, \;\;\;
\lambda_k =  \xi_0 + 2\sum_{j=1}^{\lfloor n/2\rfloor} \xi_j \cos\lP\frac{2\pi k}{n} j\rP,
\end{equation}

\item $n$ even: $\lambda_k=\lambda_{n-k}$ for $1\leq k\leq n/2$, then
\begin{equation}\label{eqn:1454}
\lambda_0 = \xi_0 + 2\sum_{j=1}^{n/2 -1} \xi_j + \xi_{n/2}, \;\;\;
\lambda_k = \xi_0 + 2\sum_{j=1}^{n/2 -1} \xi_j \cos\lP\frac{2\pi k}{n} j\rP + (-1)^k\xi_{n/2}.
\end{equation}
\end{itemize}

In fact, a circulant matrix with real or complex entries is a normal matrix, i.e., it satisfies the condition $\mathcal{C}_n^* \mathcal{C}_n = \mathcal{C}_n \mathcal{C}_n^*$, where $\mathcal{C}^*_n$ denotes the conjugate transpose of $\mathcal{C}_n$. This property implies that the extreme singular values of a circulant matrix satisfy the following relationships:
\[
\sigma_{\max}(\mathcal{C}_n) = \max_{k=0,\ldots,n-1} \abs{\lambda_k}, \;\;\;\; \sigma_{\min}(\mathcal{C}_n) = \min_{k=0,\ldots,n-1} \abs{\lambda_k}.
\]
Hence, the condition number of a circulant matrix is \[\kappa\lP \mathcal{C}_n \rP=\lP \max_{k=0,\ldots,n-1} \abs{\lambda_k} \rP \lP \min_{k=0,\ldots,n-1} \abs{\lambda_k}\rP^{-1}.\] 

%-----------------------------------------------------------------------
\subsection{Circulant embedding} 
%-----------------------------------------------------------------------

Every Toeplitz matrix $\mathcal{T}_n$ can be embedded into a circulant matrix of dimension $2n$. In fact, let $\mathcal{C}_{2n}$ be a circulant matrix defined as 
\begin{equation}\label{embCirc}
\mathcal{C}_{2n}=\left[ 
\begin{array}{cc}
\mathcal{T}_n & \mathcal{B}_n \\
\mathcal{B}_n & \mathcal{T}_n \\
\end{array}
\right],
\end{equation} where 
\[
\mathcal{B}_n :=
\left[ \begin{array}{ccccc}
\xi_* & \xi_{-n+1}  & \ldots & \xi_{-2}  &Ê\xi_{-1} \\
\xi_{n-1} & \xi_*  & \xi_{-n+1} &   &Ê\xi_{-2} \\
\vdots & \xi_{n-1}  & \xi_* & \ddots  &Ê\vdots \\
\xi_{2} & & \ddots & \ddots & \xi_{-n+1} \\
\xi_{1} & \xi_{2} & \cdots & \xi_{n-1} & \xi_*
\end{array} \right].
\] The variable $\xi_*$ does not have any restriction. Note that $\mathcal{B}_n$ is a Toeplitz matrix. If $\mathcal{T}_n$ is symmetric, then $\mathcal{C}_{2n}$ is also symmetric. This embedding is one of the key points in the development of our arguments. To see this, first we mention the Cauchy Interlacing Theorem (see \cite[Theorem 8.6.3]{golub2012matrix}) for the eigenvalues $\lambda_n(A)\leq\lambda_{n-1}(A)\leq\cdots\leq\lambda_1(A)$ of a symmetric matrix $A\in\R^{n\times n}$.

{
\begin{theorem}[Cauchy Interlacing Theorem]\label{thm_Cauchy} Let $A=[a_1|\cdots|a_n] \in \R^{m\times n}$ be a column partitioning with $m\geq n$. If $A_r=[a_1|\cdots|a_r]$, then for $r=1,\ldots,n-1$
\[
\sigma_1(A_{r+1}) \geq \sigma_1(A_{r}) \geq \sigma_2(A_{r+1})Ê\geq \cdots \geq \sigma_r(A_{r+1}) \geq \sigma_r(A_{r}) \geq \sigma_{r+1}(A_{r+1}).
\]
\end{theorem}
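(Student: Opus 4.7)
The plan is to reduce the singular-value interlacing for $A_r$ and $A_{r+1}$ to the classical Cauchy interlacing theorem for eigenvalues of real symmetric matrices, applied to the Gram matrices $A_r^T A_r$ and $A_{r+1}^T A_{r+1}$. Since $A_{r+1}=[A_r\mid a_{r+1}]$, the Gram matrix decomposes as
\[
A_{r+1}^T A_{r+1} \;=\; \begin{pmatrix} A_r^T A_r & A_r^T a_{r+1} \\ a_{r+1}^T A_r & a_{r+1}^T a_{r+1} \end{pmatrix},
\]
so $A_r^T A_r\in\R^{r\times r}$ is the leading principal $r\times r$ submatrix of $A_{r+1}^T A_{r+1}\in\R^{(r+1)\times(r+1)}$. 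Because the singular values of $A_r$ are the nonnegative square roots of the eigenvalues of $A_r^T A_r$, and the square root is monotone on $[0,\infty)$, it is enough to show that the eigenvalues of these two symmetric positive semidefinite matrices interlace in the prescribed fashion.

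The required eigenvalue interlacing is the classical Cauchy lemma: if $H\in\R^{(r+1)\times(r+1)}$ is symmetric with ordered eigenvalues $\mu_1\geq\cdots\geq\mu_{r+1}$ and $\tilde H\in\R^{r\times r}$ is any principal submatrix obtained by deleting one row and the corresponding column, with eigenvalues $\tilde\mu_1\geq\cdots\geq\tilde\mu_r$, then
\[
\mu_k \;\geq\; \tilde\mu_k \;\geq\; \mu_{k+1},\qquad k=1,\ldots,r.
\]
I would prove this via the Courant--Fischer min--max characterization: writing $\mu_k=\max_{\dim V=k}\min_{x\in V,\ \|x\|=1} x^T H x$, every $k$-dimensional subspace $W\subset\R^r$ embeds as $W\oplus\{0\}\subset\R^{r+1}$ on which the quadratic form associated with $H$ agrees with that of $\tilde H$, giving $\tilde\mu_k\leq\mu_k$. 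The bound $\tilde\mu_k\geq\mu_{k+1}$ comes from the dual characterization $\mu_{k+1}=\min_{\dim V=r-k+1}\max_{x\in V,\ \|x\|=1}x^T H x$, together with the observation that any $(r-k+1)$-dimensional subspace of $\R^{r+1}$ meets the coordinate hyperplane $\R^r\times\{0\}$ in a subspace of dimension at least $r-k$.

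Once the Cauchy lemma is in hand, applying it to $H=A_{r+1}^T A_{r+1}$ and $\tilde H=A_r^T A_r$, and then taking nonnegative square roots, yields the claimed chain for the singular values. The main obstacle, should one prefer a self-contained argument, is really only the min--max lemma; after that, every step is a mechanical translation between a symmetric matrix, its eigenvalues, and the singular values of the underlying rectangular matrix. An alternative, if one wants to bypass Courant--Fischer, is to exploit the fact that $A_{r+1}^T A_{r+1}$ differs from $\operatorname{diag}(A_r^T A_r,\,a_{r+1}^T a_{r+1})$ by a symmetric perturbation of rank at most two and invoke the Weyl-type inequalities for low-rank updates, which deliver the same interlacing conclusion.
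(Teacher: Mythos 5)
The paper does not prove this statement; it simply cites it as Theorem~8.6.3 of Golub--Van Loan's \emph{Matrix Computations} and uses it as a black box, so there is no ``paper proof'' to compare against. Your plan is the standard textbook argument for that result: pass to the Gram matrices $A_r^T A_r\subset A_{r+1}^T A_{r+1}$ (leading principal submatrix), apply Cauchy interlacing for eigenvalues of real symmetric matrices, and take nonnegative square roots. That route is correct and complete in its essentials.

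One small slip in the sketch of the eigenvalue lemma: for the lower bound $\tilde\mu_k\geq\mu_{k+1}$ you cite the dual form $\mu_{k+1}=\min_{\dim V=r-k+1}\max_{x\in V}x^T H x$ \emph{together with} the observation that an $(r-k+1)$-dimensional $V\subset\R^{r+1}$ meets $\R^r\times\{0\}$ in dimension $\geq r-k$; but those two facts combined give $\max_{x\in V}x^T H x \geq \tilde\mu_{k+1}$ (hence $\mu_{k+1}\geq\tilde\mu_{k+1}$, i.e.\ a shifted copy of the \emph{upper} bound), not $\tilde\mu_k\geq\mu_{k+1}$. The clean way to get $\tilde\mu_k\geq\mu_{k+1}$ with the min--max form is to embed: every $(r-k+1)$-dimensional $W\subset\R^r$ sits inside $\R^{r+1}$ as $W\oplus\{0\}$, where the $H$- and $\tilde H$-forms agree, so $\max_{w\in W}w^T\tilde H w\geq\mu_{k+1}$ and taking the minimum over $W$ gives $\tilde\mu_k\geq\mu_{k+1}$. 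Alternatively, your intersection observation pairs naturally with the \emph{max--min} form $\mu_{k+1}=\max_{\dim V=k+1}\min_{x\in V}x^T H x$: the optimal $V$ meets $\R^r\times\{0\}$ in dimension $\geq k$, giving $\tilde\mu_k\geq\mu_{k+1}$. Either fix is one line, so this does not affect the soundness of the overall argument, and your Weyl/low-rank-update alternative is also a valid route.
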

From the Cauchy Interlacing Theorem and the circulant embedding of $\mathcal{T}_n$, we obtain the following relationships:
\begin{equation}\label{keyObs}
\sigma_{\max}\lP \mathcal{C}_{2n} \rP \geq \sigma_{\max}\lP\left[ 
\begin{array}{c}
\mathcal{T}_n \\
\mathcal{B}_n
\end{array}
\right]\rP
\;\;\;\mbox{ and }\;\;\;
\sigma_{n}\lP\left[ 
\begin{array}{c}
\mathcal{T}_n \\
\mathcal{B}_n
\end{array}
\right]\rP \geq \sigma_{\min}\lP \mathcal{C}_{2n} \rP.
\end{equation} 
}

{
Also, we can deduce \cite[Lemma 1]{govaerts1989singular} that $\sigma_i(\mathcal{T}_n)\leq \sigma_i\lP \mathcal{C}_{2n} \rP$ for all $i$, thus \begin{equation}\label{eqn300820200819}\sigma_{\max}\lP \mathcal{T}_n \rP \leq \sigma_{\max}\lP \mathcal{C}_{2n} \rP.\end{equation} 
Unfortunately, the Cauchy Interlacing Theorem does not permit us to give a lower bound for $\sigma_{\min}(\mathcal{T}_n)$. However, from the circulant embedding we can give a lower bound for $\sigma_{\min}\lP \mathcal{T}_n\rP$, where $ \mathcal{T}_n$ can be non--symmetric or symmetric matrix. From Proposition 1 in \cite{govaerts1989singular}, we have for all $i\geq 1$,
\begin{equation}\label{keyObs1}
\sigma^{-2}_{\max}\lP \mathcal{C}_{2n} \rP \sigma_i\lP \mathcal{T}_n\rP \leq\sigma_{i}\lP \mathcal{S}_n\rP \leq \sigma^{-2}_{\min}\lP \mathcal{C}_{2n}\rP\sigma_i\lP \mathcal{T}_n\rP,
\end{equation} where $\mathcal{S}_n$ is the block in 
\[
\mathcal{C}^{-1}_{2n}=\left[ 
\begin{array}{cc}
\mathcal{P}_n & \mathcal{Q}_n \\
\mathcal{R}_n & \mathcal{S}_n \\
\end{array}
\right].
\]
In our particular case, from \eqref{eqn12092020} we observe, 
\[
\mathcal{C}^{-1}_{2n} = F_{2n}^{*} \mbox{diag}\lP G^{-1}_{2n}(1),G^{-1}_{2n}(w_{2n}),\ldots,G^{-1}_{2n}(w_{2n}^{2n-1}) \rP F_{2n},
\]
thus 
\begin{equation}\label{eqn300820200822}
\mathcal{S}_n = F^*_{2,n} D_{1,n}^{-1}F_{2,n} + F^*_{4,n} D^{-1}_{2,n} F_{4,n},
\end{equation} where the matrices $F_{2,n},F_{4,n},D_{1,n}, D_{2,n}$ are defined as \[
F_{2n}=\left[ 
\begin{array}{cc}
F_{1,n} & F_{2,n} \\
F_{3,n} & F_{4,n} \\
\end{array}
\right],
\] and \[D_{1,n}=\mbox{diag}\lP G^{-1}_{2n}(1),G^{-1}_{2n}(w_{2n}),\ldots,G^{-1}_{2n}(w_{2n}^{n-1}) \rP,\] \[D_{2,n}=\mbox{diag}\lP G^{-1}_{2n}(w_{2n}^{n}),G^{-1}_{2n}(w_{2n}),\ldots,G^{-1}_{2n}(w_{2n}^{2n-1}) \rP.\] The invertibility of $D_{1,n}$ and $D_{2,n}$ hold with high probability as well as Theorem \ref{thm01042018} and Theorem \ref{thm:LowestSingV} establish. By the expression \eqref{keyObs1}, we can understand the behavior of all singular values of $\mathcal{T}_n$ by the singular values of $\mathcal{S}_{n}$. In particular, we have, \[\sigma^{2}_{\min}\lP \mathcal{C}_{2n}\rP\sigma_{\min}\lP S_n\rP \leq \sigma_{\min}\lP \mathcal{T}_n\rP.\] The term $ \sigma^{2}_{\min}\lP\mathcal{C}_{2n}\rP$ is well understood by Theorem \ref{thm01042018} and Theorem \ref{thm:LowestSingV}. Our conjecture is that $S_n$ is well conditioned. In Figure \ref{fig020920201336}, we observe the results of a random experiment of 10,000 simulations to estimate $\sigma_{\min}(S)$, when the coefficients of $G_{2n}$ are Bernoulli $0-1$ with parameter $\frac{1}{2}$, Rademacher $\pm1$ with parameter $\frac{1}{2}$, Uniform on $(0,1)$, or Standard Normal, taking $2n=2048$. We observe that $\sigma_{\min}\lP S_n\rP$ has a nice behavior. In Table \ref{tab060820202} give a resume of the results of the random experiment for the four different distributions.\\
}

\begin{table}[h]
\begin{center}
\caption{Minimum Singular Value of matrix $S$}\label{tab060820202}
\begin{tabular}{|c|c|c|c|}\hline
Distribution & Min & Mean & 1st Quantil\\ \hline
\multicolumn{1}{|l|}{Bernoulli ($1/2$) $0-1$} &  \multicolumn{1}{|l|}{0.0000085}& \multicolumn{1}{|l|}{0.4676160} & \multicolumn{1}{|l|}{0.1778511}\\ \hline
\multicolumn{1}{|l|}{Rademacher ($1/2$) $\pm1$} & \multicolumn{1}{|l|}{0.0000006} & \multicolumn{1}{|l|}{0.2335359} & \multicolumn{1}{|l|}{0.0882868} \\ \hline
\multicolumn{1}{|l|}{Uniform $(0,1)$} & \multicolumn{1}{|l|}{0.000023} & \multicolumn{1}{|l|}{0.796888} & \multicolumn{1}{|l|}{0.293408} \\ \hline
\multicolumn{1}{|l|}{Standard Normal} & \multicolumn{1}{|l|}{0.0001075} & \multicolumn{1}{|l|}{0.2279807} &  \multicolumn{1}{|l|}{0.0859852}\\ \hline
\end{tabular}
\end{center}
\end{table}

\begin{center}
\begin{figure}\label{fig020920201336}
\scalebox{1}{\includegraphics{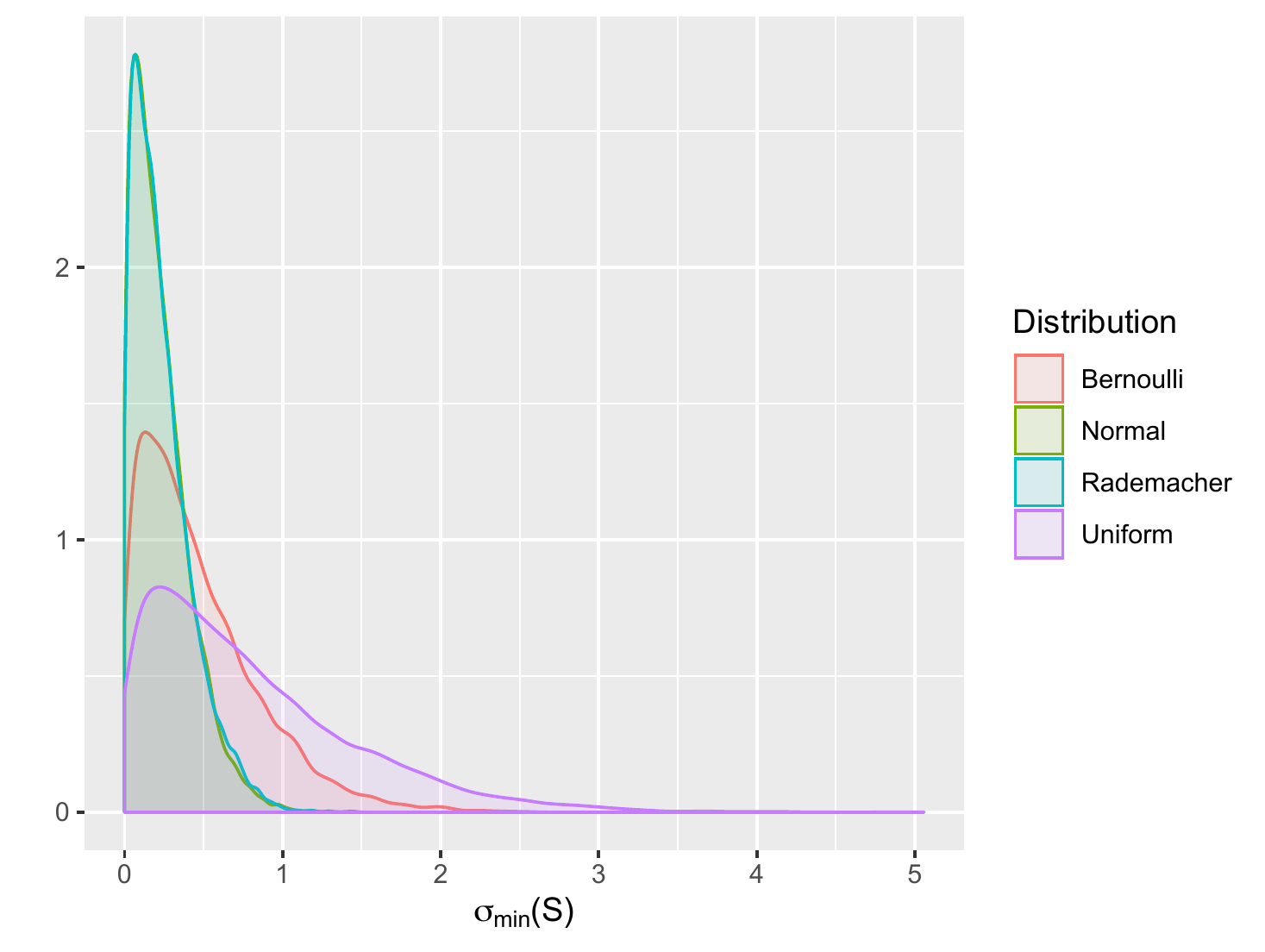}}
\caption{The approximation of density for $\sigma_{\min}(S_n)$ by 10,000 simulations with $2n=2048$ for Bernoulli, Rademacher, Uniform, and Standard Normal distributions.}
\end{figure}
\end{center}

%-----------------------------------------------------------------------
\section{Random circulant matrices}\label{sec:Circulant} 
%-----------------------------------------------------------------------

We assume the entries of a circulant matrix $\mathcal{C}_n$ are r.v. in $\Xi$. In the following, we estimate $\sigma_{\min}\lP\mathcal{C}_n\rP$ and $\sigma_{\max}\lP\mathcal{C}_n\rP$ when $\mathcal{C}_n$ is a random non--symmetric or symmetric matrix.\\

Since the eigenvalues of any circulant matrix $\mathcal{C}_n$ are $G_n\lP w_n^k \rP$, $k=0,\ldots,n-1$, we have
\begin{equation}\label{eqn1502020}
\sigma_{\max}(\mathcal{C}_n) =\max_{k=0,\ldots,n-1} \abs{\lambda_k} \leq \max_{z\in\C: \abs{z}=1} \abs{G_n(z)}.
\end{equation}

We will take advantage of the relationship \eqref{eqn1502020} to estimate $\sigma_{\max}(\mathcal{C}_n)$. As the coefficients of $G_n$ are the entries of the first row of $\mathcal{C}_n$, if $\mathcal{C}_n$ is a random matrix, then $G_n$ is a random polynomial with i.i.d. coefficients in $\Xi$. Thus, we need to estimate the maximum modulus of a random complex polynomial on the unit circle. This problem is interesting in itself. In fact, it has been studied for a long time; for example see \cite{erdos,Kah1985,Weber2006}. Any upper bound of the maximum modulus of a random polynomial on the unit circle is usually called {\it Salem--Zygmund inequality} \cite{Kah1985}. In the following statements are established Salem--Zygmund inequalities when the random coefficients are i.i.d. r.v. with m.g.f. or they have only second moment.\\

Since we are interested in the maximum modulus of $G_n(z)=\sum_{j=0}^{n-1}Ê\xi_j z^j$ on the unit circle, we can consider $W_n(x):=G_n\lP e^{ix}\rP$ for $x\in \mathbb{T}$, where $\mathbb{T}$ denotes the unit circle $\R/(2\pi\Z)$. In this way, the maximum modulus of $G_n$ on the unit circle is denoted by $\| W_n\|_\infty$.

\begin{theorem}[Salem--Zygmund inequality type I]\label{lem06082016m02}
{If} $\xi_0$ has zero mean, finite positive variance, and its m.g.f. exists in an open interval around zero. Then,
\[
\Prob{ \|W_n\|_\infty \geq C_0 \lP n\log n\rP^{1/2} }\leq \frac{C_1}{n^2},
\] where $C_0$ and $C_1$ depend only on the distribution of $\xi_0$.
\end{theorem}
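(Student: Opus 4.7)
My plan is to combine a pointwise Chernoff--Bernstein tail bound with a short net argument based on Bernstein's inequality for trigonometric polynomials, which is the classical route to Salem--Zygmund type estimates. The existence of the m.g.f. of $\xi_0$ in a neighborhood of zero (together with $\mathbb{E}\xi_0 = 0$) yields a sub-Gaussian-type tail at every fixed point of $\mathbb{T}$, while Bernstein's polynomial inequality lets me pass from pointwise control to uniform control after a union bound over only $O(n)$ net points.

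For a fixed $x \in \mathbb{T}$, write $W_n(x)=\sum_{j=0}^{n-1}\xi_j\cos(jx) + i\sum_{j=0}^{n-1}\xi_j\sin(jx)$. Both the real and imaginary parts are weighted sums of i.i.d. centered r.v.'s $\xi_j$ with weights bounded by $1$ in modulus and sum of squares at most $n$. From the m.g.f. hypothesis, standard moment estimates produce constants $v,c>0$ (depending only on the law of $\xi_0$) such that $\mathbb{E}|\xi_0|^k \leq \tfrac{k!}{2}v^2 c^{k-2}$ for all $k\geq 2$, so the classical Bernstein inequality gives
\[
\Prob{|\mathrm{Re}\,W_n(x)|\geq t}+\Prob{|\mathrm{Im}\,W_n(x)|\geq t} \leq 4\exp\lP -\frac{t^2}{2(v^2 n + ct)}\rP.
\]
Choosing $t = C_0\sqrt{n\log n}$ with $C_0$ large (depending only on $v,c$), the denominator is dominated by $v^2 n$ for $n$ large, so the right-hand side can be made at most $n^{-3}$. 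Since $|W_n(x)|\leq \sqrt{2}\max(|\mathrm{Re}\,W_n(x)|,|\mathrm{Im}\,W_n(x)|)$, this yields $\Prob{|W_n(x)| \geq \sqrt{2}\,C_0\sqrt{n\log n}} \leq n^{-3}$ at every single $x$.

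Next I discretize via Bernstein's inequality for trigonometric polynomials, $\|W_n'\|_\infty \leq (n-1)\|W_n\|_\infty$. Let $\mathcal{N}=\{2\pi k/N : 0\leq k < N\}$ with $N = \lceil 4\pi n\rceil$. For any $x\in\mathbb{T}$ there is $x_k\in\mathcal{N}$ with $|x-x_k|\leq 2\pi/N \leq 1/(2n)$, and the mean value theorem gives $|W_n(x)-W_n(x_k)| \leq |x-x_k|\cdot \|W_n'\|_\infty \leq \tfrac12 \|W_n\|_\infty$, whence the deterministic inequality $\|W_n\|_\infty \leq 2\max_{k<N} |W_n(x_k)|$. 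A union bound over the $N=O(n)$ points of $\mathcal{N}$ combined with the pointwise tail bound above gives
\[
\Prob{\|W_n\|_\infty \geq 2\sqrt{2}\,C_0 \sqrt{n\log n}} \leq N\cdot n^{-3} \leq C_1\, n^{-2},
\]
which is the desired estimate after renaming the constant.

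The main delicate point is calibrating Bernstein so that $t=C_0\sqrt{n\log n}$ lies in the Gaussian regime where $v^2 n + ct$ is governed by $v^2 n$; this is automatic since $t\ll n$ for large $n$, and $v,c$ are inherited from the radius of convergence of the m.g.f.\ of $\xi_0$ and its exponential moments. Apart from bookkeeping of constants and a careful derivation of the sub-exponential moment bound from the m.g.f.\ hypothesis, the argument is standard.
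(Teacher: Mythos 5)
Your proof is correct, and it follows a genuinely different route from the paper. Both arguments rest on the same two ingredients --- a sub-Gaussian/sub-exponential bound extracted from the m.g.f.\ hypothesis and Bernstein's inequality $\|W_n'\|_\infty \leq (n-1)\|W_n\|_\infty$ for trigonometric polynomials --- but they deploy them differently. You use Bernstein's polynomial inequality to set up a deterministic $O(n)$-point net on $\mathbb{T}$ (giving $\|W_n\|_\infty \leq 2\max_k|W_n(x_k)|$), obtain a pointwise tail of order $n^{-3}$ from the Bernstein probability inequality for sums, and finish by a union bound over the net. The paper instead uses Bernstein's polynomial inequality to show there is a \emph{random} interval $I\subset\mathbb{T}$ of length comparable to $1/n$ on which $|W_n|\geq\tfrac12\|W_n\|_\infty$ (Lemma~\ref{claim}), integrates the exponential $e^{\pm tW_n(x)}$ over that interval to bound the moment generating function $\E{\exp(\tfrac{t}{2}\|W_n\|_\infty)}\lesssim n\,e^{3\alpha^2t^2n/2}$, and then applies a Chernoff bound to the supremum itself. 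Your net-plus-union-bound approach is the more modular and ``textbook'' derivation and is slightly easier to check line by line; the paper's approach has the advantage of producing an explicit bound on the exponential moment of $\|W_n\|_\infty$, which is more information than a tail bound alone and is the route taken in Kahane's classical treatment. Both are valid, and your constants check out: with $t=C_0\sqrt{n\log n}$ the term $ct$ is dominated by $v^2n$ for large $n$, so the exponent is $\Theta(\log n)$ as you claim, and the $O(n)$ cardinality of the net brings the $n^{-3}$ pointwise bound down to the desired $n^{-2}$.
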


From the expression \eqref{eqn1502020} and Theorem \ref{lem06082016m02} we get:
\begin{equation}\label{eqn:15020201422}
\Prob{\sigma_{\max}(\mathcal{C}_n) \geq C_0 \lP n\log n\rP^{1/2} } \leq \Prob{ \|W_n\|_\infty \geq C_0 \lP n\log n\rP^{1/2} }\leq \frac{C_1}{n^2}.
\end{equation}
\begin{remark} By the Borel--Cantelli Lemma, we deduce from \eqref{eqn:15020201422} that, \[\limsup_{n\to\infty} \frac{\sigma_{\max}(\mathcal{C}_n)}{\lP n\log n\rP^{1/2}} \leq C_0\;\;\; \mbox{ almost surely,} \] and also we have an equivalent statement for the maximum modulus of a random polynomial on the unit circle, 
\[\limsup_{n\to\infty} \frac{ \|W_n\|_\infty}{\lP n\log n\rP^{1/2}} \leq C_0\;\;\; \mbox{ almost surely. }\]
\end{remark}

The conditions in Theorem \ref{lem06082016m02} can be relaxed up to the condition that the random coefficients are i.i.d. r.v. with only zero mean and finite second moment. For this, we use the expectation of the maximum modulus of random polynomial on the unit circle. More precisely, Weber \cite{Weber2006} shows that,
\begin{align}
\mathbb{E}\left(\max\limits_{x\in \mathbb{T}}\left|\sum_{j=0}^{n-1} \xi_j e^{ijx}\right|\right)\leq & \; C
\min\left\{(n\log(n+1)\mathbb{E}{(|\xi_0|^2)})^{\nicefrac{1}{2}},n\mathbb{E}{|\xi_0|}\right\} \nonumber \\
\leq &\; C (n\log(n+1)\mathbb{E}{(|\xi_0|^2)})^{\nicefrac{1}{2}},
\end{align}
where $C$ is a universal positive constant. Hence, using Markov inequality, we can deduce that,
\begin{eqnarray}\label{eqn:15020201436}
\Prob{\sigma_{\max}(\mathcal{C}_n) \geq C_0 n^{1/2}\lP \log n\rP } & \leq & \Prob{\|W_n\|_\infty \geq C_0 n^{1/2}\lP \log n\rP } \nonumber \\
& \leq & \frac{C\lP\E{\xi_0^2}\rP^{1/2} \lP n \log n\rP^{1/2}}{ C_0 n^{1/2}\log n} \nonumber \\
& = & \frac{C_1}{\lP \log n\rP^{1/2}},
\end{eqnarray} where $C_1$ depends on the distribution of $\xi_0$.\\

For a random symmetric circulant, we establish results equivalent to Theorems \ref{lem06082016m02}. Observed that if $\mathcal{C}_n^{sym}$ is a random symmetric circulant matrix, half of its entries in the first row are i.i.d. First, we established the corresponding Salem--Zygmund inequality for a trigonometric random polynomial where the coefficients of the terms $z^j$ and $z^{n-j}$ are equal.

\begin{theorem}[Salem--Zygmund type II]\label{thm:LargestSingV} Suppose $\xi_0$ has zero mean, finite positive variance, and its m.g.f. exists in an open interval around zero. Let $W^{sym}_n(x) := \sum_{j=0}^{n-1} \xi_j e^{ijx}$ for $x\in\mathbb{T}$ with $\xi_j=\xi_{n-j}\in\Xi$ for $j=1,\ldots, \lfloor n/2\rfloor + a_n$, $a_n=-1$ if $n$ is even and $a_n=0$ if $n$ is odd. If $\|W^{sym}_n\|_\infty:= \max_{x\in\mathbb{T}} \abs{W^{sym}_n(x)}$. Then,
\[
\Prob{ \|W^{sym}_n\|_\infty \geq C_0 \left( n \log n\right)^{\nicefrac{1}{2}} }\leq \frac{C_1}{n^2},
\] where $C_0, C_1$ depend on the distribution of $\xi$.
\end{theorem}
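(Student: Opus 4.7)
The plan is to follow the same blueprint as Theorem \ref{lem06082016m02}: combine a pointwise Chernoff tail estimate (available because $\xi_0$ has m.g.f.\ in a neighborhood of the origin) with a discretization of the unit circle via Bernstein's inequality for trigonometric polynomials. The only new ingredient relative to the non--symmetric case is that the symmetry constraint $\xi_j=\xi_{n-j}$ pairs each independent coefficient $\xi_j$ (for $1\le j\le\lfloor n/2\rfloor+a_n$) with the factor $e^{ijx}+e^{i(n-j)x}$, which has modulus at most $2$ uniformly in $x\in\mathbb{T}$. This uniform bound is the reason why the symmetry costs only constants in the final estimate.

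First I would rewrite $W_n^{sym}(x)$ as an explicit sum of independent variables. When $n$ is odd,
\begin{equation*}
W_n^{sym}(x)=\xi_0+\sum_{j=1}^{(n-1)/2}\xi_j\lP e^{ijx}+e^{i(n-j)x}\rP,
\end{equation*}
while if $n$ is even an additional middle term $\xi_{n/2}e^{i(n/2)x}$ is present. In either case the real and imaginary parts of $W_n^{sym}(x)$ are sums of at most $\lfloor n/2\rfloor+1$ independent copies of $\xi_0$ weighted by real numbers of modulus at most $2$. Since $\xi_0$ is centered and its m.g.f.\ exists on an open interval around zero, Taylor expansion gives $\E{e^{\lambda\xi_0}}\le e^{b\lambda^2}$ for some $b>0$ and all sufficiently small $|\lambda|$. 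Applying the Chernoff bound coordinate-wise to the real and imaginary parts yields constants $c_0,c_1>0$, depending only on the distribution of $\xi_0$ (and crucially not on $x$, thanks to the uniform coefficient bound), such that for every fixed $x\in\mathbb{T}$,
\begin{equation*}
\Prob{\abs{W_n^{sym}(x)}\ge s}\le c_0\exp\lP-\frac{c_1 s^2}{n}\rP,
\end{equation*}
valid for $s=\textnormal{O}\lP\sqrt{n\log n}\rP$.

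Next I would discretize. Because $W_n^{sym}$ is a trigonometric polynomial of degree at most $n-1$, Bernstein's inequality gives $\|(W_n^{sym})'\|_\infty\le (n-1)\|W_n^{sym}\|_\infty$. Choosing a uniformly spaced net $\mathcal{N}\subset\mathbb{T}$ of cardinality $|\mathcal{N}|=\Theta(n)$ and mesh at most $1/(2n)$, the mean value theorem applied between a maximizer of $|W_n^{sym}|$ and its nearest point in $\mathcal{N}$ yields $\|W_n^{sym}\|_\infty\le 2\max_{x\in\mathcal{N}}\abs{W_n^{sym}(x)}$. Combining this reduction with the pointwise estimate through a union bound, and choosing $s=C_0\sqrt{n\log n}$ with $C_0$ large enough that $c_1 C_0^2\ge 3$, I obtain
\begin{equation*}
\Prob{\|W_n^{sym}\|_\infty\ge 2 C_0\sqrt{n\log n}}\le |\mathcal{N}|\,c_0\exp\lP-c_1 C_0^2\log n\rP\le\frac{C_1}{n^2},
\end{equation*}
which, after renaming $2C_0$ as $C_0$, is the claimed inequality.

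The main delicate point is checking that the Chernoff constants $c_0,c_1$ can be chosen uniformly in $x\in\mathbb{T}$. This reduces to the uniform bound $\abs{e^{ijx}+e^{i(n-j)x}}\le 2$, which ensures that the moment generating function of $W_n^{sym}(x)$ is controlled by that of a sum of $\Theta(n)$ bounded multiples of $\xi_0$ for every $x$, with no degeneration at resonance points (such as roots of unity) because only the absolute value of the coefficients enters the Chernoff computation. Once that is secured, the combinatorics of symmetric versus non--symmetric polynomials affects only the implicit constants in $C_0$ and $C_1$.
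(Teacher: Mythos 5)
Your proof is correct, but it takes a genuinely different route from the paper's. The paper runs the classical Salem--Zygmund integral argument (the same one used for the non--symmetric Theorem \ref{lem06082016m02}): it invokes Lemma \ref{claim} --- itself a consequence of Bernstein's inequality --- to produce a random interval $I\subset\mathbb{T}$ of length $\Theta(1/n)$ on which $|W_n^{sym}|\geq\frac{1}{2}\|W_n^{sym}\|_\infty$, integrates $e^{tW_n^{sym}}+e^{-tW_n^{sym}}$ over $\mathbb{T}$ to upgrade the pointwise m.g.f.\ bound $\E{e^{t\,\mathrm{Re}\,W_n^{sym}(x)}}\leq e^{3\alpha^2 t^2 n/2}$ into an exponential moment bound for the single random variable $\|W_n^{sym}\|_\infty$, and only then applies Chernoff once. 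You instead discretize: a pointwise Chernoff estimate at each node of a deterministic net $\mathcal{N}$ of cardinality $\Theta(n)$, followed by a union bound, with Bernstein's inequality used directly to pass from the net to the supremum via $\|W_n^{sym}\|_\infty\leq 2\max_{\mathcal{N}}|W_n^{sym}|$. Both proofs rest on exactly the same two ingredients --- the locally sub--Gaussian m.g.f.\ bound for $\xi_0$ (Lemma \ref{lema1559}) and Bernstein's inequality --- and both handle the symmetry constraint $\xi_j=\xi_{n-j}$ in the same way, by observing that the merged weight $\cos(jx)+\cos((n-j)x)$ (likewise $\sin$) has modulus at most $2$ uniformly in $x$, so the Chernoff constants are $x$-independent; this is the only adaptation needed relative to the non--symmetric case in either proof. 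The difference is purely architectural: the paper pushes the maximum inside the expectation and pays an $\Theta(n)$ prefactor through the length of $I$, whereas you pay the same $\Theta(n)$ factor through the net cardinality in the union bound; in both cases the Chernoff exponent $c_1C_0^2\log n$ absorbs the prefactor by taking $C_0$ large. Your net--and--union--bound route is the standard one in the modern high--dimensional probability toolkit and is arguably more elementary; the paper's moment--integral route is the historical Salem--Zygmund device and has the small additional benefit of directly giving moment bounds such as $\E{\|W_n^{sym}\|_\infty}=\textnormal{O}(\sqrt{n\log n})$ without further work. Both deliver the stated $C_1/n^2$ tail with constants depending only on the distribution of $\xi_0$.
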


We denote the eigenvalues of $\mathcal{C}^{sym}_n$ by $\lambda^{sym}_k$ for $k=0,\ldots,n-1$. From Theorem \ref{thm:LargestSingV} it is observed that,
\begin{equation}\label{10042020}
\Prob{\max_{k=0,\ldots,n-1} \abs{\lambda^{sym}_k} \geq C_0 \lP n \log n\rP^{1/2}} \leq \Prob{\|W^{sym}_n\|_\infty \geq C_0 \lP n \log n\rP^{1/2} } \leq \frac{C_1}{n^2}.
\end{equation} Thus, $\sigma_{\max}\lP\mathcal{C}^{sym}_n\rP$ is at most $ C_0 \lP n \log n\rP^{1/2}$ with probability  $1-\mbox{O}\lP n^{-2}\rP$.

\begin{remark}
If the random entries of a symmetric circulant matrix are Gaussian, Adhikari and Saha \cite{adhikari2017fluctuations} show that $\limsup_{n\to\infty} \frac{\sigma_{\max}\lP\mathcal{C}^{sym}_n\rP}{\sqrt{n\log n}} \leq C_0$ almost surely for some constant $C_0$. Actually, they mention that this result holds for sub--Gaussian r.v.  But, using our result from random polynomials and Borel-Cantelli Lemma, the same result holds for random variables with m.g.f. Moreover, we have, \[\limsup_{n\to\infty}\frac{\| W^{sym}_n\|_\infty}{\sqrt{ n \log n }} \leq C_0\] almost surely.
\end{remark}

From relationships in \eqref{keyObs} we give an estimation the condition number of a random rectangular Toeplitz matrix. In the case a rectangular matrix $A\in\R^{m\times n}$, its condition number is defined as \cite[Corollary 1.27]{burgisser2013condition}:
\begin{equation}\label{eqn250820200732}
\kappa(A)=\frac{\sigma_{\max}}{d(A,\Sigma)},
\end{equation} where $\Sigma=\lL B\in\R^{m\times n} : \textnormal{rank}(B) <\min\lL n,m\rL\rL$. Additionally, if $m\geq n$, we have $\min_{\absD{x}=1} \absD{Ax} = \sigma_n(A)$ (see \cite[Proposition 1.15]{burgisser2013condition}). Thus, we have the following result.
\begin{theorem}\label{cor250820200741} Let $\mathcal{A}_n:=\left[ 
\begin{array}{c}
\mathcal{T}_n \\
\mathcal{B}_n
\end{array}
\right]$, we have:
\begin{itemize}
\item Suppose $\mathcal{T}_n$ is a non--symmetric random Toeplitz, and $\xi_0$ has zero mean and finite positive variance. If the m.g.f. of $\xi_0$ exists in an open interval and $\xi_0$ satisfies Condition \eqref{fanto1}. Then, for any $\varepsilon>0$ and $\rho\in(0,1/4)$, we have  for all large $n$,
\[
 \Prob{\kappa\lP\mathcal{A}_n\rP \leq \frac{C_0}{\varepsilon} n^{\rho +1/2} \lP\log n\rP^{1/2}} \geq 1-C_1\lP \lP\varepsilon^2+\varepsilon\rP n^{-2\rho} + n^{-1/2+\textnormal{o}(1)}\rP,
\] where $C_0, C_1$ depend on the distribution of $\xi_0$.

\item Suppose $\mathcal{T}_n$ is a symmetric random Toeplitz, and $\xi_0$ has zero mean and finite positive variance. If the m.g.f. of $\xi_0$ exists in an open interval and $\xi_0$ satisfies Condition \eqref{fanto1}. Then, for any $\varepsilon>0$, we have for all large $n$,
\[
\Prob{\kappa\lP \mathcal{A}_n\rP \leq \frac{C_0}{\varepsilon} n^{1.01} \lP\log n\rP^{1/2}} \geq 1-C_1\lP \varepsilon n^{-0.1} + n^{-77/300+\textnormal{o}(1)}\rP,
\] where $C_0, C_1$ depend on the distribution of $\xi_0$. 
\end{itemize}
\end{theorem}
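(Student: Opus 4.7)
The plan is to invoke the circulant embedding together with the Cauchy Interlacing bounds to reduce both bullets directly to the already-proven Theorems \ref{thm10042020_1} and \ref{thm10042020_2} applied to $\mathcal{C}_{2n}$.

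First, I would identify the condition number in the form needed. The matrix $\mathcal{A}_n$ is $2n\times n$ with $m=2n\geq n$, so by \cite[Proposition 1.15]{burgisser2013condition} the bottom singular value coincides with $\min_{\|x\|_2=1}\|\mathcal{A}_n x\|_2$, and the definition \eqref{eqn250820200732} reduces to $\kappa(\mathcal{A}_n) = \sigma_{\max}(\mathcal{A}_n)/\sigma_n(\mathcal{A}_n)$. Embedding $\mathcal{T}_n$ into the circulant matrix $\mathcal{C}_{2n}$ as in \eqref{embCirc} and invoking the two Cauchy-interlacing inequalities \eqref{keyObs} simultaneously yields the key deterministic estimate
\[
\kappa(\mathcal{A}_n) \;=\; \frac{\sigma_{\max}(\mathcal{A}_n)}{\sigma_n(\mathcal{A}_n)} \;\leq\; \frac{\sigma_{\max}(\mathcal{C}_{2n})}{\sigma_{\min}(\mathcal{C}_{2n})} \;=\; \kappa(\mathcal{C}_{2n}).
\]
This already reduces the problem to controlling $\kappa(\mathcal{C}_{2n})$, which is exactly the content of our circulant-matrix theorems.

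Second, I would apply Theorem \ref{thm10042020_1} to the non-symmetric embedding $\mathcal{C}_{2n}$ (whose hypotheses, m.g.f.\ existence and Condition \eqref{fanto1}, are precisely the first bullet's assumptions). Substituting $2n$ for $n$ in the conclusion of that theorem gives, for every $\varepsilon>0$ and $\rho\in(0,1/4)$, a probability at least $1 - C_1((\varepsilon^2+\varepsilon)(2n)^{-2\rho} + (2n)^{-1/2+\mathrm{o}(1)})$ event on which $\kappa(\mathcal{C}_{2n}) \leq \tfrac{C_0}{\varepsilon}(2n)^{\rho+1/2}(\log 2n)^{1/2}$. Absorbing the factors $2^{\rho+1/2}$, $2^{-2\rho}$, and $(\log 2)^{1/2}$ into the constants $C_0,C_1$ yields the stated bound for $\kappa(\mathcal{A}_n)$. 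The symmetric bullet is handled identically: if $\mathcal{T}_n$ is symmetric, then $\mathcal{C}_{2n}$ is symmetric (observed just after \eqref{embCirc}), so it equals $\mathcal{C}^{sym}_{2n}$, and Theorem \ref{thm10042020_2} applied with $n\mapsto 2n$ produces the stated estimate after the same constant absorption.

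The argument is essentially a corollary of the previously established machinery; the only subtlety worth flagging explicitly is the compatibility of the rectangular condition number \eqref{eqn250820200732} with the ratio $\sigma_{\max}/\sigma_n$, and the fact that the same circulant embedding works without change for the non-symmetric and symmetric cases. Consequently, no new probabilistic input is required beyond the results already stated, and no union bound beyond the one that is already built into Theorems \ref{thm10042020_1} and \ref{thm10042020_2}.
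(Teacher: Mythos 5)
Your proof is correct and follows exactly the argument the paper indicates in the paragraph preceding the theorem: the deterministic interlacing bounds \eqref{keyObs} give $\sigma_{\max}(\mathcal{A}_n)\le\sigma_{\max}(\mathcal{C}_{2n})$ and $\sigma_n(\mathcal{A}_n)\ge\sigma_{\min}(\mathcal{C}_{2n})$, so together with the rectangular condition number identity $\kappa(\mathcal{A}_n)=\sigma_{\max}(\mathcal{A}_n)/\sigma_n(\mathcal{A}_n)$ one gets $\kappa(\mathcal{A}_n)\le\kappa(\mathcal{C}_{2n})$, and the result is then Theorem \ref{thm10042020_1} (resp.\ Theorem \ref{thm10042020_2}) applied with $n\mapsto 2n$ and the factors of $2$ absorbed into the constants. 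Your handling of the symmetric case and the constant absorption is also the intended one, so there is nothing to add.
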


%-----------------------------------------------------------------------
\section{Proof of the Salem--Zygmund inequalities}\label{sec:salemZygmund}
%-----------------------------------------------------------------------

The strategy to prove Theorem \ref{lem06082016m02} is essentially given in \cite{Barrera}, see the proof of Theorem 1.2 therein. Here, we give the outline of the proof, but since the proof of Theorem \ref{thm:LargestSingV} follows similar ideas, we include details of our arguments in Appendix \ref{app10042020}.\\

The existence of m.g.f. of $\xi_0$ around zero implies that for any $x\in\mathbb{T}$,
\[
\E{e^{t W_n(x)}} \leq  e^{\nicefrac{\alpha^2 t^2 n}{2}},
\] for some fixed positive constant $\alpha^2$ which depends on the distribution of $\xi_0$. It is possible to show that there exists an interval $I\subset \mathbb{T}$ such that $|W_n(x)|\geq \frac{1}{2}\|W_n\|_\infty$ for $x\in I$ and the length of $I$ is $\frac{8}{3n}$. Then,
\[
\E{\exp\lP \frac{1}{2}t\|W_n\|_\infty\rP}  \leq  \frac{8n}{3} \E{\int_I\lP e^{t W_n(x)}+ e^{-tW_n(x)}\rP \mu(dx)} 
 \leq  \frac{16 n}{3} \exp\lP 3\alpha^2 t^2 n/2\rP.
\]

Finally, we use Chernoff bounding technique to obtain an upper bound for $\|W_n\|_\infty$ with high probability. Let $b_n$ be a positive real number for $n\in\N$. Then,
\[
\Prob{\| W_n\|_\infty \geq b_n} = \Prob{e^{t\| W_n\|_\infty} \geq e^{tb_n}} \leq e^{-t b_n} \E{e^{t\| W_n\|_\infty}}.
\] In Appendix \ref{app10042020} we show how to select  adequate $t$ and $b_n$ such that,
\[
\Prob{ \|W_n\|_\infty \geq C_0 \left( n \log n\right)^{\nicefrac{1}{2}} }\leq \frac{C_1}{n^2}.
\]

%-----------------------------------------------------------------------
\section{Proof of Theorem \ref{thm01042018}} \label{sec:LowestSingV1}
%-----------------------------------------------------------------------

Before starting, we need to introduce little notations. The {\it floor} of a real number $x$, denoted by $\lfloor x \rfloor$, is the greatest integer $n$ such that $n\leq x$. For $k,l\in\Z$, the greatest common divisor of $k$ and $l$ is denoted by $\gcd\lP k,l\rP$. Recall that $\|\cdot\|_2$ is the Euclidean norm in $\R^n$. The determinant of a square matrix is denoted by $\det(\cdot)$. Let $f_n,g_n$ be two real sequences, we write $f_n=\mbox{o}(g_n)$ if for every $\alpha>0$ there exists $n_0\in\N$ such that for all $n\geq n_0$ we have $\abs{f_n}\leq \alpha \abs{g_n}$. \\

The target is to find a nice upper bound of the L\'evy concentration of $n^{\rho}\sigma_{\min}\lP\mathcal{C}_n\rP$. Recall that the eigenvalues of a circulant matrix are $\lambda_k=G_n\lP e^{i2\pi n/k}\rP$, $k=0,\ldots,n-1$, where $G_n(z)=\sum_{j=0}^{n-1} \xi_j z^j$ and $\xi_0,\ldots,\xi_{n-1}$ are i.i.d. r.v. in $\Xi$. If $x_k := \nicefrac{k}{n}$, we have,
\begin{eqnarray}\label{eqn:1802202011445}
& & \hspace{-1cm}\Prob{\sigma_{\min}(\mathcal{C}_n) \leq \varepsilon n^{-\rho}} =  \Prob{n^{\rho}\sigma_{\min}(\mathcal{C}_n) \leq \varepsilon } \leq  \sum_{k=0}^{n-1} \Prob{ \abs{n^{\rho} G_n(e^{i2\pi x_k})} \leq \varepsilon} \nonumber \\
& & \hspace{-0.5cm} \leq \mathcal{L}\lP n^{\rho} G_n(1),\varepsilon\rP + \mathcal{L}\lP n^{\rho} G_n(-1),\varepsilon\rP + \displaystyle\sum_{\substack{k=0 \\ k\neq 0,n/2}}^{n-1} \mathcal{L}\lP n^{\rho} G_n\lP e^{i2\pi x_k}\rP,\varepsilon \rP. 
\end{eqnarray}
Note that in the expression \eqref{eqn:1802202011445}, $n^{\rho}G_n(e^{i2\pi x_k})$ is a sum of r.v. with (deterministic) real or complex coefficients for all $k=0,\ldots, n-1$. To estimate the L\'evy concentration for each of these sums, we use the {\it least common denominator}, which is defined as follows.

\begin{definition}\label{def20042020}
Let $L$ be any fixed positive number. The {\it least common denominator} (LCD) of a matrix $V\in\R^{m\times n}$ is defined as, 
\[
{D(V)=D(V,L)}:= \inf \lL \|\theta \|_2 >0 : \theta\in\R^m, \textnormal{dist}\lP V^T\theta,\Z^n \rP < L\sqrt{\log_{+} \left(\frac{\| V^T\theta\|_2}{L}\right)} \rL,
\]
where 
{$\textnormal{dist}(v,\mathbb{Z}^n):=\min\lL \| v - z\|_2 : z\in\Z^n\rL$}
and $\log_{+}(x)=\max\{\log(x),0\}$.
\end{definition}

Note that if $v\in\R^n$, the definition of LCD for $v$ reduces to, \[
D(v) = D(v,L) = \inf\lL \theta>0 : {\dist(\theta v,\Z^n)} <L \sqrt{\log_+\frac{\|\theta v\|_2}{L}}\rL.
\]

The notion of LCD used here was introduced by Rudelson and Vershynin \cite{RV2} in the study of the eigenvectors of random matrices with all independent random entries. For a given matrix $V$, we denote by $\| V\|_\infty$ the maximum Euclidean norm of the columns of $V$. Rudelson and Vershynin establish a simple lower bound for LCD of $V$ in terms of $\| V\|_\infty$ (see Proposition 7.4 in \cite{RV2}).

\begin{proposition}[Simple lower bound for LCD]\label{pro23042020} For every matrix $V$ and $L>0$, one has \[ D(V,L) \geq \frac{1}{2\| V\|_\infty}. \] 
\end{proposition}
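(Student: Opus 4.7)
The plan is to prove the contrapositive: show that any $\theta\in\R^m\setminus\{0\}$ with $\|\theta\|_2 < \frac{1}{2\|V\|_\infty}$ fails the strict defining inequality $\dist(V^T\theta,\Z^n) < L\sqrt{\log_+(\|V^T\theta\|_2/L)}$, so no such $\theta$ contributes to the infimum and $D(V,L)$ is at least $\frac{1}{2\|V\|_\infty}$.

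The first step exploits the column structure of $V^T\theta$. Let $V_1,\ldots,V_n$ denote the columns of $V$, so that $(V^T\theta)_j=\langle V_j,\theta\rangle$. By Cauchy--Schwarz together with the definition of $\|V\|_\infty$ as the maximum column $\ell_2$-norm,
\[
\left|(V^T\theta)_j\right| \;\leq\; \|V_j\|_2\,\|\theta\|_2 \;\leq\; \|V\|_\infty\,\|\theta\|_2 \;<\; \tfrac{1}{2}.
\]
Every coordinate of $V^T\theta$ therefore lies in the open interval $(-\tfrac{1}{2},\tfrac{1}{2})$; the nearest integer in each coordinate is $0$, so the closest lattice point of $\Z^n$ to $V^T\theta$ is the origin. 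This yields the clean identity $\dist(V^T\theta,\Z^n) = \|V^T\theta\|_2$.

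The second step is a short scalar estimate. Writing $u:=\|V^T\theta\|_2$ and using the identity just derived, the failure of the LCD condition reduces to the inequality $u \geq L\sqrt{\log_+(u/L)}$. If $u\leq L$, the right-hand side is zero and the bound is automatic. If $u>L$, set $s:=u/L>1$; the inequality becomes $s^2\geq \log s$, which holds because $\log s\leq s-1\leq s^2$ on $[1,\infty)$. Thus every $\theta$ with $\|\theta\|_2<\frac{1}{2\|V\|_\infty}$ fails the defining strict inequality, so the infimum is attained (or at best approached) only for $\|\theta\|_2\geq\frac{1}{2\|V\|_\infty}$.

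There is no real obstacle here: the whole argument is a two-line observation. The only conceptual point is noticing that a sufficiently small $\theta$ forces $V^T\theta$ inside the fundamental cube around the origin, which collapses the lattice distance onto the Euclidean norm and makes the logarithmic factor on the right-hand side trivially dominated.
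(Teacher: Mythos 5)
Your proof is correct and is essentially the standard argument for this fact (the paper cites it to Proposition 7.4 of Rudelson--Vershynin \cite{RV2} rather than proving it): bounding each coordinate of $V^T\theta$ by $\|V\|_\infty\|\theta\|_2<\tfrac12$ collapses the lattice distance to the Euclidean norm, after which the scalar inequality $u\geq L\sqrt{\log_+(u/L)}$ is elementary. The one detail worth stating explicitly is the boundary behavior: your argument uses the strict bound $|(V^T\theta)_j|<\tfrac12$, which guarantees the nearest lattice point is exactly the origin, so no tie-breaking issue arises.
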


Moreover, they show how to relate the L\'evy concentration function with the LCD (see Theorem 7.5 in \cite{RV2}).

\begin{theorem}[Small Ball Probability Inequality]\label{thm:small} If $V$ is an $m\times n$ matrix  and $X\in\R^n$ is a random vector with i.i.d. entries which satisfy condition \eqref{fanto1}. Then for every $L\geq \sqrt{m/q}$ we have, 
\[
{\mathcal{L}\lP V X,\varepsilon \sqrt{m} \rP} \leq \frac{\lP CL/\sqrt{m}\rP^m}{\lP\det\lP VV^T\rP \rP^{1/2}} \lP \varepsilon + \frac{\sqrt{m}}{D(V)}\rP^m, \quad \varepsilon \geq0.
\] The constant $C$ depends on the distribution of the entries of $X$, and $D(V)$ is the LCD of $V$.
\end{theorem}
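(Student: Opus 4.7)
The plan is to follow the Rudelson--Vershynin approach that reduces $\mathcal{L}(VX,\cdot)$ to an integral of the characteristic function of $VX$ and then exploits the LCD hypothesis. First I would invoke a multidimensional Esseen-type inequality: for any random vector $Y\in\R^m$ and any $\varepsilon>0$,
\[
\mathcal{L}(Y,\varepsilon\sqrt m)\;\le\;\Bigl(\frac{C\varepsilon}{\sqrt m}\Bigr)^{m}\!\int_{B(0,\sqrt m/\varepsilon)}|\phi_Y(\theta)|\,d\theta,
\]
applied with $Y=VX$. Independence of the entries of $X$ factorizes the characteristic function as $|\phi_{VX}(\theta)|=\prod_{k=1}^{n}|\phi_{\xi}((V^{T}\theta)_k)|$, so the problem reduces to a one-dimensional pointwise bound of the form
\[
|\phi_\xi(t)|\;\le\;\exp\!\bigl(-c\,q\,\textnormal{dist}(t,\mathbb Z)^{2}\bigr),
\]
which follows from condition \eqref{fanto1} by symmetrization with an independent copy $\xi'$: the first part of \eqref{fanto1} controls $\E{1-\cos(2\pi t(\xi-\xi'))}$ from below via $1-\cos(2\pi x)\ge c\,\textnormal{dist}(x,\Z)^2$, while the tail condition $\Prob{|\xi|>M}\le q/2$ is what keeps $\xi-\xi'$ in the bulk where this quadratic lower bound is usable. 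Tensorizing over $k$ yields $|\phi_{VX}(\theta)|\le\exp\!\bigl(-c\,q\,\textnormal{dist}(V^{T}\theta,\Z^{n})^{2}\bigr)$.

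The heart of the proof is then to bound $\int_{B(0,\sqrt m/\varepsilon)}\exp(-c\,q\,\textnormal{dist}(V^{T}\theta,\Z^{n})^{2})\,d\theta$ by $\det(VV^T)^{-1/2}$ times a tractable factor. I would split the domain into $\|\theta\|_2<D(V)$ and its complement. Inside $\|\theta\|_2<D(V)$, Definition \ref{def20042020} forces $\textnormal{dist}(V^{T}\theta,\Z^{n})\ge L\sqrt{\log_{+}(\|V^{T}\theta\|_2/L)}$, and the hypothesis $L\ge\sqrt{m/q}$ makes $\exp(-cq\,\textnormal{dist}^{2})\le (L/\|V^{T}\theta\|_2)^{m}$, which is integrable after the linear change of variable $\eta=V^{T}\theta$ onto the $m$-dimensional subspace $\mathrm{Im}\,V^{T}\subset\R^{n}$; the $\Z^n$-periodicity of $\textnormal{dist}(\cdot,\Z^n)$ reduces the integral to a sum of translated Gaussian integrals of total mass $(C/\sqrt q)^{m}$, and the Jacobian of the change of variables contributes the factor $\det(VV^{T})^{-1/2}$. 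On the complement $\|\theta\|_2\ge D(V)$ inside $B(0,\sqrt m/\varepsilon)$, the crude volume bound produces an extra factor of $(\sqrt m/(\varepsilon D(V)))^{m}$, which accounts for the second term in the final estimate.

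Assembling the two regions gives
\[
\int_{B(0,\sqrt m/\varepsilon)}|\phi_{VX}(\theta)|\,d\theta\;\le\;\frac{(CL)^{m}}{\det(VV^{T})^{1/2}}\Bigl(1+\frac{\sqrt m/\varepsilon}{D(V)}\Bigr)^{m},
\]
and multiplying by $(C\varepsilon/\sqrt m)^m$ from Esseen yields the claimed bound. The main obstacle I expect is the bookkeeping in the LCD region: one must verify that the logarithmic factor $\sqrt{\log_{+}(\|V^{T}\theta\|_2/L)}$ coming from Definition \ref{def20042020} exactly compensates the polynomial $\|V^{T}\theta\|_2^{m}$ produced by the volume element in the $\eta$ coordinates, which is precisely the role of the threshold $L\ge\sqrt{m/q}$. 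A secondary technicality is handling the rectangular case $m<n$ in the change of variables, which is done cleanly by applying the singular value decomposition of $V$ to reduce to the case of orthonormal rows where $\det(VV^{T})^{1/2}=1$.
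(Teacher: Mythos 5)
First, a point of reference: the paper does not prove this statement at all — it quotes it verbatim from Rudelson and Vershynin \cite{RV2} (Theorem 7.5 there), so the only fair comparison is with the proof in that reference. Your skeleton (multidimensional Esseen inequality, factorization of the characteristic function over the independent coordinates, symmetrization, splitting according to the LCD, and the change of variables $\eta=V^T\theta$ producing $\det(VV^T)^{-1/2}$) is indeed the architecture of that proof, and your remark that $L\ge\sqrt{m/q}$ is what turns $\exp(-cq\,\mathrm{dist}^2)$ into an integrable power $(L/\|\eta\|)^{cm}$ on the good region is exactly the right accounting.

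However, two of your concrete steps fail as written. First, the pointwise bound $|\phi_\xi(t)|\le\exp(-cq\,\mathrm{dist}(t,\Z)^2)$ is false: for Rademacher $\xi$ and $t=1/2$ one has $|\phi_\xi(1/2)|=1$ while $\mathrm{dist}(1/2,\Z)=1/2$. The correct consequence of symmetrization is $|\phi_\xi(t)|\le\exp\bigl(-c\,\E{\mathrm{dist}(t\tilde\xi,\Z)^2\,\mathbf{1}_{\{1<|\tilde\xi|\le 2M\}}}\bigr)$ with $\tilde\xi=\xi-\xi'$; condition \eqref{fanto1} only guarantees that $\tilde\xi$ lands in that annulus with probability comparable to $q$, and after tensorizing one gets a bound involving $\mathrm{dist}(z\,V^T\theta,\Z^n)$ for a scale $z$ with $1\le|z|\le 2M$, so the LCD must be invoked for the rescaled vectors $z\theta$ — this is where the dependence of $C$ on $M$ and $q$ enters, and it is not a cosmetic change. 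Second, your treatment of the region $D(V)\le\|\theta\|_2\le\sqrt m/\varepsilon$ by a ``crude volume bound'' cannot work: beyond the LCD radius $V^T\theta$ may essentially hit the lattice, so $|\phi_{VX}(\theta)|$ can be close to $1$ there, and a volume bound yields a term of order $C^m$ with no $\det(VV^T)^{-1/2}$ and no smallness in $\sqrt m/D(V)$, which destroys the claimed estimate when $D(V)$ is large. The standard device is instead monotonicity of $\varepsilon\mapsto\mathcal{L}(VX,\varepsilon\sqrt m)$: it suffices to prove the bound for $\varepsilon\ge\sqrt m/D(V)$, in which case the entire Esseen ball lies inside the LCD-controlled region and the term $\sqrt m/D(V)$ in the final expression arises from this replacement of $\varepsilon$, not from integrating over a bad region.
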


A special case of Theorem \ref{thm:small} is when $m=1$. In this case, $V^T X$ represents a sum of r.v.

\begin{corollary}[Small ball probabilities for sums]\label{thm060120201236} Let $\xi_k$ be i.i.d. copies of $\xi$ satisfying condition \eqref{fanto1}. Let ${\bf a}=(a_1,\ldots,a_n)\in\R^n$. Then for every $L\geq (1/q)^{1/2}$, we have,
\[
\mathcal{L}\lP \sum_{k=1}^n a_k\xi_k, \varepsilon\rP \leq \frac{CL}{\|{\bf a}\|_2}\lP \varepsilon + \frac{1}{D({\bf a},L)}\rP, \;\;\; \varepsilon\geq 0.
\] The constant $C$ depends on the distribution of $\xi$, and $D({\bf a},L)$ is the LCD of ${\bf a}$.
\end{corollary}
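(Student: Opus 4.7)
The statement is the $m=1$ specialization of Theorem \ref{thm:small}, so the plan is simply to instantiate that theorem with a well-chosen $V$. I would set $V:={\bf a}^{T}\in\R^{1\times n}$, viewed as a $1\times n$ matrix, and $X:=(\xi_1,\ldots,\xi_n)^{T}\in\R^{n}$. Then $VX=\sum_{k=1}^{n}a_{k}\xi_{k}$ is exactly the random variable whose concentration function we want to estimate.

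Next, I would check that the hypotheses of Theorem \ref{thm:small} match those of the corollary. With $m=1$, the requirement $L\geq\sqrt{m/q}$ becomes $L\geq(1/q)^{1/2}$, which is precisely the hypothesis we are given. The i.i.d. condition \eqref{fanto1} on the entries of $X$ is inherited from the assumption on $\xi$.

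Then I would compute the three ingredients appearing in the right-hand side of Theorem \ref{thm:small} when $m=1$: the Gram determinant $\det(VV^{T})=\|{\bf a}\|_{2}^{2}$, so $(\det(VV^{T}))^{1/2}=\|{\bf a}\|_{2}$; the prefactor $(CL/\sqrt{m})^{m}$ reduces to $CL$; the argument $\varepsilon\sqrt{m}$ of $\mathcal{L}$ becomes $\varepsilon$; and the additive term $\sqrt{m}/D(V)$ becomes $1/D(V)$. Finally I would observe that the LCD $D(V,L)$ of the $1\times n$ matrix $V={\bf a}^{T}$, as defined in Definition \ref{def20042020}, reduces to the vector LCD $D({\bf a},L)$: the infimum is over $\theta\in\R$ (since $m=1$), and $V^{T}\theta=\theta{\bf a}$, so the distance-to-lattice condition matches the one-dimensional definition verbatim.

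Plugging these identifications into the inequality of Theorem \ref{thm:small} gives exactly
\[
\mathcal{L}\lP \sum_{k=1}^{n} a_{k}\xi_{k},\varepsilon\rP \leq \frac{CL}{\|{\bf a}\|_{2}}\lP \varepsilon + \frac{1}{D({\bf a},L)}\rP,
\]
which is the claim. There is no real obstacle here; the only thing to verify carefully is the trivial compatibility between the matrix and vector versions of the LCD, so that the $m=1$ case of Theorem \ref{thm:small} literally specializes to Corollary \ref{thm060120201236}.
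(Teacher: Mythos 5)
Your proof is correct and takes precisely the same route as the paper, which simply observes that Corollary \ref{thm060120201236} is the $m=1$ specialization of Theorem \ref{thm:small}; you merely spell out the identifications ($\det(VV^T)=\|{\bf a}\|_2^2$, prefactor $CL$, $D(V,L)=D({\bf a},L)$) that the paper leaves implicit.
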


Now, we can proceed to give an upper bound of $\mathcal{L}\lP n^{\rho} G_n\lP e^{i2\pi x_k}\rP, \varepsilon\rP$. Actually, this is shown by Barrera and Manrique in \cite{Barrera}, see Theorem 1.6. But for the sake of clarity, we decided to include here the important parts of the used strategy. \\
%Additionally, some of the results shown here will be used in the proof of Theorem \ref{thm:LowestSingV}.\\ 

To apply Theorem \ref{thm:small}, we distinguish two cases given by the expression \eqref{eqn:1802202011445}, when $n^{\rho}G_n(e^{i2\pi x_k})$ has real or complex coefficients.  

\begin{lemma}[Real coefficients]\label{lem:20022020830} Under the hypothesis of Theorem \ref{thm01042018}, we have for any $\varepsilon\geq0$,
\[
\mathcal{L}\lP n^{\rho} G_n(1),\varepsilon\rP + \mathcal{L}\lP n^{\rho} G_n(-1),\varepsilon\rP \leq C_1\lP\frac{ \varepsilon}{n^{\rho+1/2}} + \frac{1}{n^{1/2}}\rP,
\] where $C_1$ depends on the distribution of $\xi_0$.
\end{lemma}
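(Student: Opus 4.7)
The plan is to apply Corollary \ref{thm060120201236} to each of the two sums $n^\rho G_n(1) = \sum_{j=0}^{n-1} n^\rho \xi_j$ and $n^\rho G_n(-1) = \sum_{j=0}^{n-1}(-1)^j n^\rho \xi_j$. The respective coefficient vectors $\mathbf{a}^{+} := n^\rho(1,1,\ldots,1)$ and $\mathbf{a}^{-} := n^\rho(1,-1,1,-1,\ldots)$ both have Euclidean norm $\|\mathbf{a}^{\pm}\|_2 = n^{\rho+1/2}$, so once a constant $L \geq (1/q)^{1/2}$ is fixed (depending only on the distribution of $\xi_0$), the whole problem reduces to controlling the least common denominators $D(\mathbf{a}^{\pm},L)$ from below.

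For the upper bound on $D$, I would simply test $\theta^{\ast} := 1/n^\rho$ in the LCD condition: since $\theta^{\ast} \mathbf{a}^{\pm} \in \mathbb{Z}^n$ exactly, one has $\textnormal{dist}(\theta^{\ast} \mathbf{a}^{\pm}, \mathbb{Z}^n) = 0$, while $\|\theta^{\ast} \mathbf{a}^{\pm}\|_2 = \sqrt{n}$ makes the right-hand side $L\sqrt{\log_{+}(\sqrt{n}/L)}$ strictly positive for $n > L^2$, giving $D(\mathbf{a}^{\pm},L) \leq 1/n^\rho$. For the matching lower bound, I would rule out every $\theta \in (0, 1/n^\rho)$ by splitting on $s := \theta n^\rho$. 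In the range $s \in (0, 1/2]$, rounding to zero yields $\textnormal{dist}(\theta \mathbf{a}^{\pm}, \mathbb{Z}^n) = \|\theta \mathbf{a}^{\pm}\|_2$, so the LCD inequality collapses to $v < \sqrt{\log_{+} v}$ with $v = \|\theta \mathbf{a}^{\pm}\|_2/L$, which is impossible since $v^2 \geq \log_{+} v$ for every $v > 0$. In the range $s \in (1/2, 1)$, rounding to $\pm 1$ yields $\textnormal{dist}(\theta \mathbf{a}^{\pm}, \mathbb{Z}^n) = \sqrt{n}(1-s)$, and the trivial estimate $\log_{+}(s\sqrt{n}/L) \leq \tfrac{1}{2}\log n$ forces $1-s \leq L\sqrt{(\log n)/(2n)}$. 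Combining these two subcases shows $D(\mathbf{a}^{\pm}, L) \geq 1/(2n^\rho)$ for all large $n$, hence $1/D(\mathbf{a}^{\pm}, L) \leq 2n^\rho$.

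Plugging this into Corollary \ref{thm060120201236} will give
\[
\mathcal{L}\lP n^\rho G_n(\pm 1), \varepsilon \rP \;\leq\; \frac{CL}{n^{\rho+1/2}}\lP \varepsilon + 2n^\rho \rP \;\leq\; \frac{CL\,\varepsilon}{n^{\rho+1/2}} + \frac{2CL}{n^{1/2}},
\]
and summing the two inequalities yields the claim with $C_1 := 4CL$. The main obstacle is the lower bound on the LCD: the generic estimate $D(\mathbf{a}^{\pm}, L) \geq 1/(2\|\mathbf{a}^{\pm}\|_{\infty}) = 1/(2n^{\rho+1/2})$ from Proposition \ref{pro23042020} is off by a full factor of $\sqrt{n}$ and would leave an $\varepsilon$-free term of size $O(1)$ rather than $O(n^{-1/2})$. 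The improvement $D(\mathbf{a}^{\pm},L) \asymp 1/n^\rho$ has to be extracted from the rigid arithmetic structure that every coordinate of $\mathbf{a}^{\pm}$ equals $\pm n^\rho$, which is precisely what the case analysis above exploits.
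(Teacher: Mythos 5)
Your proposal is correct and rests on exactly the same key tool as the paper, namely Corollary \ref{thm060120201236} applied to the coefficient vectors $\mathbf{a}^{\pm}=n^{\rho}(1,\pm 1,\ldots,\pm 1)$, with $\|\mathbf{a}^{\pm}\|_2=n^{\rho+1/2}$ and the LCD bound $D(\mathbf{a}^{\pm})\geq \tfrac{1}{2}n^{-\rho}$. The only substantive divergence is that the ``main obstacle'' you describe does not exist: you compute $\|\mathbf{a}^{\pm}\|_{\infty}=n^{\rho+1/2}$, but in the paper's (and Rudelson--Vershynin's) convention a vector is a $1\times n$ matrix in the LCD framework, and $\|V\|_{\infty}$ denotes the maximum Euclidean norm of its columns, which for $\mathbf{a}^{\pm}$ is simply $\max_j|a^{\pm}_j|=n^{\rho}$. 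Hence Proposition \ref{pro23042020} already delivers $D(\mathbf{a}^{\pm})\geq \tfrac{1}{2}n^{-\rho}$ in one line, which is precisely what the paper does. Your hand-built lower bound is nevertheless valid --- the case $s=\theta n^{\rho}\in(0,1/2]$ (rounding to zero forces the impossible $v<\sqrt{\log_{+}v}$) is in fact the standard proof of Proposition \ref{pro23042020} specialized to this vector, and it alone already yields $D\geq \tfrac{1}{2}n^{-\rho}$; the matching upper bound $D\leq n^{-\rho}$ and the $s\in(1/2,1)$ subcase are additional but play no role in the final estimate. So the two proofs coincide in substance; yours simply re-derives the ingredient the paper cites off the shelf.
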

\begin{proof} 
Note $n^{\rho} G_n(1) = n^{\rho} \sum_{j=0}^{n-1}\xi_j$. Write $\mathbf{a}=(1,\ldots,1)^{T}\in\R^n$. By Proposition \ref{pro23042020}, we have the LCD of $\mathbf{a}$ is such that  $D(\mathbf{a})\geq \frac{1}{2}n^{-\rho}$. Let $L\geq (1/q)^{1/2}$ a fixed number. By Corollary \ref{thm060120201236}, we have,
\[
\mathcal{L}\lP n^{\rho} G_n(1),\varepsilon\rP \leq \frac{CL}{n^{\rho}n^{1/2}}\lP \varepsilon + 2n^{\rho}\rP \leq C_1\lP\frac{\varepsilon}{n^{\rho+1/2}} + \frac{1}{n^{1/2}}\rP,
\] where $C_1$ depends on the distribution of $\xi_0$.

For $n^{\rho} G_n(-1) = n^{\rho} \sum_{j=0}^{n-1}(-1)^j\xi_j$, the proof is similar, but taking $\mathbf{a}=(1,-1,\ldots,(-1)^n)\in\R^n$.
\end{proof}

For our second case with complex coefficients, we define the $2\times n$ matrix $V_k$, $k=0,\ldots,n-1$, as 
\begin{equation}\label{eqn:20022020922}
V_k := 
\lC
\begin{array}{cccc}
1 & \cos\lP 2\pi x_k \rP & \ldots & \cos\lP (n-1)2\pi x_k \rP \\
0 & \sin\lP 2\pi x_k \rP & \ldots & \sin\lP (n-1)2\pi x_k \rP
\end{array}
\rC,
\end{equation} where $x_k=\frac{k}{n}$. Let $X:=\lC\xi_0,\ldots,\xi_{n-1}\rC^T \in \R^n$. Then,
\[
V_k X = \lC
\sum_{j=0}^{n-1} \xi_j \cos\lP j2\pi x_k\rP, \sum_{j=0}^{n-1} \xi_j \sin\lP j2\pi x_k\rP \rC^T \in \R^2,
\] which implies
\begin{equation}\label{eqn200220201218}
\| V_k X\|_2 = \abs{\sum_{j=0}^{n-1} \xi_j e^{ij2\pi x_k}} = \abs{G_n\lP e^{i2\pi x_k}\rP}.
\end{equation}
On the other hand, we have for all $k$,
\begin{equation}\label{eqn:20022020824}
\det\lP V_k V_k^T \rP = \det\lC
\def\arraystretch{1.5}
\begin{array}{cc}
\sum_{j=0}^{n-1} \cos^2\lP j2\pi x_k \rP & \frac{1}{2}\sum_{j=0}^{n-1} \sin\lP 2\cdot j2\pi x_k \rP \\
\frac{1}{2}\sum_{j=0}^{n-1} \sin\lP 2\cdot j2\pi x_k \rP & \sum_{j=0}^{n-1} \sin^2\lP j2\pi x_k \rP
\end{array} 
\rC= \frac{n^2}{4}.
\end{equation}

Before continuing, we need to introduce two auxiliary lemmas, which can be found in \cite{Barrera}, but for the sake of clarity, we include their proofs in Appendix \ref{app30042020}. The first lemma gives an upper bound for the number of positive integers $l$ such that $\gcd\lP l,n\rP\geq \alpha$ for some fixed positive $\alpha\in\R$. The second lemma gives a lower bound for the distance between a vector whose entries are cosine values to the grid $\Z^n$.

\begin{lemma}\label{lem270120191754}
Let $y,M\in [1,\infty)$ be fixed numbers.
The cardinality of the set
\[
\left\{k\in [1,M]\cap \mathbb{N}:~\gcd\lP k, M\rP \geq y\right\}
\] is at most $\frac{1}{\lfloor y\rfloor} M^{1+C\lP \log \log M\rP^{-1}}$, where $C$ is a universal positive constant.
\end{lemma}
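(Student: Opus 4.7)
The plan is to partition the set according to the divisor structure of $M$. If $\gcd(k, M) \geq y$ for some $k \in [1,M] \cap \N$, then setting $d := \gcd(k, M)$ produces a positive divisor of $M$ satisfying $d \geq y$ and $d \mid k$. This yields the union bound
\[
\abs{\lL k \in [1,M] \cap \N : \gcd(k,M) \geq y \rL} \;\leq\; \sum_{d \mid M,\, d \geq y} \abs{\lL k \in [1,M] \cap \N : d \mid k \rL}.
\]
Since $d$ divides $M$, the inner set has exactly $M/d$ elements, so the right-hand side equals $\sum_{d \mid M,\, d \geq y} M/d$.

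Next, because $d$ is a positive integer and $d \geq y \geq 1$, we have $d \geq \lfloor y \rfloor$, so $M/d \leq M/\lfloor y \rfloor$. Writing $\tau(M)$ for the number of divisors of $M$, this produces the clean bound
\[
\sum_{d \mid M,\, d \geq y} \frac{M}{d} \;\leq\; \frac{\tau(M)\,M}{\lfloor y \rfloor}.
\]

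The remaining ingredient is the classical Wigert estimate for the divisor function, $\tau(M) \leq M^{C/\log\log M}$ for some absolute constant $C>0$, valid for all $M$ sufficiently large; for the finitely many small $M$ not covered, the claim is trivial after possibly enlarging $C$. Substituting gives
\[
\frac{\tau(M)\,M}{\lfloor y \rfloor} \;\leq\; \frac{1}{\lfloor y \rfloor}\,M^{\,1 + C(\log\log M)^{-1}},
\]
which is exactly the statement of the lemma. The only real obstacle is citing (or proving) the correct quantitative form of the divisor-function bound; the rest of the argument is a one-step union bound over divisors of $M$, and the presence of $\lfloor y \rfloor$ rather than $\lceil y \rceil$ only makes the target weaker.
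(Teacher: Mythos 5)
Your proof is correct and follows essentially the same route as the paper: decompose the set by the divisor $d=\gcd(k,M)$ of $M$, bound the per-divisor count, and invoke the divisor-function estimate $\tau(M)\leq M^{C(\log\log M)^{-1}}$ (Apostol, Theorem 13.12). The only cosmetic difference is that the paper writes the per-divisor count as the Euler totient $T(M/d)$ and then uses $T(s)\leq s-\sqrt{s}$, whereas you simply over-count by $|\{k\leq M : d\mid k\}|=M/d$; your version is cleaner and the totient detour buys nothing for the stated bound.
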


\begin{lemma}\label{lem050320191745}
Fix $\theta\in[0,2\pi)$ and positive $m\in\Z$. Let $\mathcal{V}$ be a vector in $\R^m$ {whose} entries are $\mathcal{V}_j= r\cos\lP j 2\pi x-\theta \rP$ for $j=0,\ldots,m-1$ with positive integer $r\geq 2$ and $x=\nicefrac{1}{m}$. Then,
\[
\mathrm{dist}\lP \mathcal{V},\Z^m\rP \geq \frac{1}{48}\cdot\frac{1}{2\pi x},
\] whenever $\frac{1}{2r\lP2\pi x\rP}\geq 6$.
\end{lemma}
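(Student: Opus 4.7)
The plan is to lower bound $\mathrm{dist}(\mathcal{V}, \Z^m)^2 = \sum_{j=0}^{m-1} \mathrm{dist}(\mathcal{V}_j, \Z)^2$ by identifying a substantial subset of indices $j$ for which $\mathcal{V}_j$ is bounded away from $\Z$. Since $\mathcal{V}_j = r\cos(2\pi j x - \theta) \in [-r, r]$, only the integer levels $k \in \{-r, -r+1, \ldots, r\}$ are relevant. I would fix a threshold $\delta \in (0, 1/2)$, call an index $j$ \emph{bad} if $\mathrm{dist}(\mathcal{V}_j, \Z) \leq \delta$, and estimate the bad count from above; the remaining \emph{good} indices each contribute at least $\delta^2$ to the sum.

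For each interior level $k$ with $|k| < r$, the equation $r\cos(2\pi t x - \theta) = k$ has exactly two roots in a period of length $1/x$, and near each root the derivative with respect to $t$ has magnitude $2\pi r x \sqrt{1 - (k/r)^2}$. The hypothesis $2\pi r x \leq 1/12$ controls $|\mathcal{V}_{j+1} - \mathcal{V}_j|$, so the samples traverse each integer level slowly but in a predictable way: the number of consecutive $j$'s with $|\mathcal{V}_j - k| \leq \delta$ near each root is at most $2\delta$ divided by this derivative magnitude. Summing over interior $k$, using the classical estimate $\sum_{|k| < r} (1 - (k/r)^2)^{-1/2} = O(r)$, produces an interior bad count of order $\delta/(2\pi x)$.

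For the boundary levels $k = \pm r$ the derivative vanishes, so I would replace it by the quadratic approximation $r - r\cos(s) = 2r\sin^2(s/2) \asymp r s^2/2$ near the extrema; then $|\mathcal{V}_j \mp r| \leq \delta$ holds only for $t$ in an interval of length $O(\sqrt{\delta/r}/(2\pi x))$ around each extremum, contributing at most $O(\sqrt{\delta/r}/(2\pi x))$ additional bad indices. Combining the two estimates and optimizing $\delta$ (the natural scale is $\delta \asymp 1/r$, which the hypothesis $1/(2r(2\pi x)) \geq 6$ calibrates quantitatively) leaves a set of good indices whose cardinality is a definite fraction of $1/(2\pi x)$; substituting into $\sum_j \mathrm{dist}(\mathcal{V}_j, \Z)^2 \geq \delta^2 \cdot (\text{good count})$ yields a bound of the claimed form $\tfrac{1}{48} \cdot \tfrac{1}{2\pi x}$.

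The main obstacle I anticipate is the handling of the neighborhoods of $\pm r$, where the derivative of $r\cos$ vanishes and many consecutive samples may cluster near the integer $\pm r$; a naive count here would blow up like $\sqrt{\delta/r}/x$, dwarfing the interior contribution. The condition $1/(2r(2\pi x)) \geq 6$ is precisely what prevents this clustering from dominating, and tracking the numerical constants carefully through the interior/boundary balance is what produces the explicit constant $1/48$ in the stated bound.
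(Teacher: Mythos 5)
Your route is genuinely different from the paper's, and there is a gap at the final step. The paper does not use a good/bad threshold or derivatives; it treats $\exp(i(j\cdot 2\pi x-\theta))$ as vertices of a regular $m$-gon, slices each interval $[s/r,(s+1)/r]\subset[-1,1]$ into pieces of width $3\cdot 2\pi x$, shows each piece captures at least one value $\cos(j\cdot 2\pi x - \theta)$, and from this directly lower-bounds the $\ell^1$-type sum $\sum_j d_j$ (with $d_j$ the distance of $\cos(j\cdot 2\pi x - \theta)$ to the nearest endpoint of $[s/r,(s+1)/r]$), then multiplies by $r$.

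Your framework via $\dist(\mathcal{V},\Z^m)^2=\sum_j\dist(\mathcal{V}_j,\Z)^2$ with a threshold $\delta$ is the correct $\ell^2$ decomposition, and the interior/boundary split is sensible, but the bound it produces is $\dist(\mathcal{V},\Z^m)\geq\sqrt{\delta^2\cdot(\text{good count})}$. With $\delta\asymp 1/r$ and a good count $\asymp 1/(2\pi x)=m/(2\pi)$, this gives $\dist(\mathcal{V},\Z^m)\gtrsim\sqrt{m}/r$, which is not $\tfrac{1}{48}\cdot\tfrac{1}{2\pi x}=\tfrac{m}{96\pi}$: for large $m$ (the hypothesis only forces $m\geq 24\pi r$) the latter is larger by a factor $\asymp r\sqrt{m}$, and no tuning of $\delta$ can bridge that. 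In fact, an $\ell^2$ bound of order $m$ is unattainable in principle from your decomposition, because $\dist(\mathcal{V}_j,\Z)\leq 1/2$ for every $j$ already forces $\dist(\mathcal{V},\Z^m)\leq\sqrt{m}/2$; the paper's own argument really controls the $\ell^1$ sum $\sum_j\dist(\mathcal{V}_j,\Z)$, for which an order-$m$ bound is reasonable, and then reads it off as the Euclidean distance. So the concluding sentence of your sketch—that substituting into $\sum_j\dist(\mathcal{V}_j,\Z)^2\geq\delta^2\cdot(\text{good count})$ ``yields a bound of the claimed form''—is precisely the step that does not close.
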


\begin{lemma}[Complex coefficients]\label{lem:20022020836} Under the hypothesis of Theorem \ref{thm01042018}, we have for all large $n$,
\[
\displaystyle\sum_{\substack{k=0 \\ k\neq 0,n/2}}^{n-1} \mathcal{L}\lP n^{\rho} G\lP e^{i2\pi x_k}\rP,\varepsilon \rP \leq C\lP\frac{\varepsilon^2}{n^{2\rho}} + \frac{1}{n^{1/2-\textnormal{o}(1)}}\rP,
\] where $C$ depends on the distribution of $\xi_0$.
\end{lemma}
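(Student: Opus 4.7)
My plan is to apply the Small Ball Probability Inequality (Theorem \ref{thm:small}) with $m=2$ to each of the matrices $V_k$ defined in \eqref{eqn:20022020922}, exploiting the determinant identity \eqref{eqn:20022020824}, the norm identity \eqref{eqn200220201218}, and the scaling $\mathcal{L}(n^\rho V_kX,\varepsilon)=\mathcal{L}(V_kX,\varepsilon/n^\rho)$. For any fixed $L\geq\sqrt{2/q}$ and each $k\in\lL 1,\ldots,n-1\rL\setminus\lL n/2\rL$ this yields an estimate of the form
\[
\mathcal{L}\lP n^\rho G_n\lP e^{i2\pi x_k}\rP,\varepsilon\rP \;\leq\; \frac{CL^2}{n}\lP\frac{\varepsilon^2}{n^{2\rho}}+\frac{1}{D(V_k)^2}\rP,
\]
so summing over $k$ produces the required leading term $\textnormal{O}(\varepsilon^2/n^{2\rho})$ and reduces the lemma to showing $\sum_k D(V_k)^{-2}=\textnormal{O}(n^{1/2+\textnormal{o}(1)})$.

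The heart of the argument is the effective lower bound $D(V_k)\gtrsim n/d$, where $d:=\gcd(k,n)$. Writing $\theta=(\theta_1,\theta_2)\in\R^2$ and $r=\|\theta\|_2$, the entries of $V_k^T\theta$ take the form $r\cos(j\cdot 2\pi k/n-\varphi)$, a sequence periodic of period $n/d$. Minimizing the distance to $\Z^n$ period by period gives $\dist(V_k^T\theta,\Z^n)=\sqrt{d}\,\dist(\mathcal{V}^{(0)},\Z^{n/d})$ with $\mathcal{V}^{(0)}$ one period; after the bijective relabeling $j\mapsto(k/d)j\pmod{n/d}$ (valid because $\gcd(k/d,n/d)=1$), Lemma \ref{lem050320191745} applies with $m=n/d$ and frequency $x=d/n$ and yields $\dist(\mathcal{V}^{(0)},\Z^{n/d})\geq n/(96\pi d)$ whenever $r\leq n/(24\pi d)$. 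Since $\|V_k^T\theta\|_2=r\sqrt{n/2}$ is at most polynomial in $n$, the right-hand side $L\sqrt{\log_+(\|V_k^T\theta\|_2/L)}$ of the LCD-defining inequality is $\textnormal{O}(\sqrt{\log n})$, which is dominated by $n/(96\pi\sqrt{d})$ for all large $n$. Hence no $\theta$ with $\|\theta\|_2\lesssim n/d$ satisfies that inequality, so $D(V_k)\gtrsim n/d$.

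Organizing by divisors, the number of $k\in\lL 1,\ldots,n-1\rL$ with $\gcd(k,n)=d$ is $\varphi(n/d)\leq n/d$, so
\[
\sum_{\substack{k=1\\ k\neq n/2}}^{n-1}\frac{1}{D(V_k)^2} \;\leq\; C\sum_{d\mid n,\; d<n}\frac{n}{d}\cdot\frac{d^2}{n^2} \;=\; \frac{C\sigma(n)}{n} \;=\; \textnormal{O}\lP n^{\textnormal{o}(1)}\rP
\]
by the classical estimate $\sigma(n)=\textnormal{O}(n\log\log n)$; dividing by $n$ gives $\textnormal{O}(n^{-1+\textnormal{o}(1)})$, comfortably stronger than the required $n^{-1/2+\textnormal{o}(1)}$. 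For divisors $d$ so close to $n$ that the cosine bound weakens, the universal bound $D(V_k)\geq 1/2$ from Proposition \ref{pro23042020} together with Lemma \ref{lem270120191754} confines the relevant indices to a set of size $n^{1/2+\textnormal{o}(1)}$, which is precisely the tolerated error.

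The principal obstacle I anticipate is reconciling Lemma \ref{lem050320191745}, which is stated for \emph{integer} $r\geq 2$, with the real values $r=\|\theta\|_2$ that actually enter the LCD. Small $r$ (say $r<1/2$) is handled directly: every coordinate of $V_k^T\theta$ then lies in $(-1/2,1/2)$, forcing $\dist(V_k^T\theta,\Z^n)=r\sqrt{n/2}$, which fails the LCD inequality since $t<\sqrt{\log_+ t}$ has no positive solution. The intermediate range $1/2\leq r\leq n/(24\pi d)$ requires either a real-valued refinement of Lemma \ref{lem050320191745} — obtainable by rounding $r$ to the nearest integer and controlling the resulting perturbation in the cosine values — or a direct reproof that tracks real amplitudes through the discrepancy argument underlying the lemma. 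Once this technical gap is closed, the counting and summation over divisors of $n$ are routine.
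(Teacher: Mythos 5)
Your approach is correct and uses the same core tools as the paper --- Theorem \ref{thm:small} with $m=2$ applied to the matrices $V_k$ of \eqref{eqn:20022020922}, the identities \eqref{eqn200220201218} and \eqref{eqn:20022020824}, and Lemma \ref{lem050320191745} for the LCD --- but organizes the counting more sharply. The paper makes a coarse two--regime split: for $\gcd(k,n)\leq n^{1/2}$ it uses only $n'=n/\gcd(k,n)\geq n^{1/2}$ in Lemma \ref{lem050320191745}, obtaining the \emph{uniform} bound $D(n^\rho V_k)\gtrsim n^{1/2-\rho}$ over all $\leq n$ such indices, while for $\gcd(k,n)>n^{1/2}$ it falls back on Proposition \ref{pro23042020} together with the count $n^{1/2+\textnormal{o}(1)}$ from Lemma \ref{lem270120191754}; both blocks contribute $\textnormal{O}(n^{-1/2+\textnormal{o}(1)})$, which fixes the lemma's error exponent. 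You instead track the genuine dependence $D(V_k)\gtrsim n/d$ with $d=\gcd(k,n)$ (your $\sqrt d$ observation on the periodicity of $V_k^T\theta$ is a correct refinement, though what actually caps the LCD here is the applicability window $r\lesssim n'/(24\pi)$ of Lemma \ref{lem050320191745} rather than the distance bound), then sum over divisors $d\mid n$ with weight $\varphi(n/d)\leq n/d$; the identity $\sum_{d\mid n}d/n=\sigma(n)/n=\textnormal{O}(\log\log n)$ then gives an error $\textnormal{O}(n^{-1+\textnormal{o}(1)})$, strictly better than the statement demands, with the handful of large divisors covered trivially. The gap you flag --- Lemma \ref{lem050320191745} is stated for integer $r\geq 2$ while the LCD involves real $r=\|\theta\|_2$ --- is exactly what the paper addresses in the text following its application of the lemma: for $r\geq 2$ it replaces $r$ by $\lfloor r\rfloor$ (turning the constraint $\frac{1}{12(2\pi x)}\geq r$ into $\frac{1}{12(2\pi x)}+1\geq r$), and for $1/2\leq r<2$ it passes to $\lfloor n^\rho r\rfloor\geq 2$, which stays in range because $\rho<1/4$; your proposed rounding fix is thus the right one, and once that is spelled out your argument is complete and in fact sharper than the paper's.
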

\begin{proof}
Recalling that $x_k=\frac{k}{n}$. We need to distinguish two cases for $\gcd(k,n)$. First, we assume $\gcd(k,n)\geq n^{1/2}$. Then by Lemma \ref{lem270120191754}, the number of integers $k$ that satisfies this condition is at most $n^{1/2+\textnormal{o}(1)}$. Note that if $V_k$ is the matrix defined by \eqref{eqn:20022020922}, then by Proposition \ref{pro23042020}, the LCD of $V_k$ satisfies $D(n^{\rho}V_k)\geq \frac{1}{2}n^{-\rho}$. Thus, using the expressions \eqref{eqn200220201218} and \eqref{eqn:20022020824}, by Theorem \ref{thm:small} we have,
\begin{eqnarray}\label{eqn03052020_1}
\sum_{\substack{k=0 \\ \alpha\;:\;  k\neq 0,n/2,\;\; \gcd\lP k, n\rP \;\geq\; n^{1/2}}}^{N-1}  \mathcal{L}\lP n^{\rho} G\lP e^{i2\pi x_k}\rP,\varepsilon \rP 
& \leq & n^{1/2+\textnormal{o}(1)} \frac{C^2L^2}{2n^{1+2\rho}}\lP\varepsilon + 4n^{\rho}\rP^2 \nonumber\\
 & \leq & C_1\lP\frac{\varepsilon^2}{n^{1/2+2\rho-\textnormal{o}(1)}} + \frac{1}{n^{1/2-\textnormal{o}(1)}} \rP,
\end{eqnarray}
where in the last inequality we use the fact $(a+b)^2\leq 2a^2+2b^2$ and $C_1$ is a positive constant depending on the distribution of $\xi_0$.\\

Now, we assume $\gcd(n,k)\leq n^{1/2}$. Let $V_k$ be the matrix defined by \eqref{eqn:20022020922} and $x=\nicefrac{k}{n}$. Let $\Theta = r\lC \cos(\theta), \sin(\theta)\rC^T\in\R^2$, where $r>0$ and $\theta\in\lC0,2\pi\rC$. For fixed $r,\theta$, we have,
\begin{equation}\label{eqn23042020}
V_k^T\Theta = r\lC\cos\lP-\theta\rP,\cos\lP2\pi x - \theta\rP,\ldots,\cos\lP2\pi\lP n-1\rP  x - \theta \rP\rC^T.
\end{equation}
Note that $\|\Theta\|_2=r$, $\| V_k^T\Theta\|_2\leq r\sqrt{n}$, and $V_k^T\Theta$ can have duplicate entries. Now, we need to estimate the LCD of $V_k$. Since $\gcd(n,k)\leq n^{1/2}$, we can apply Lemma \ref{lem050320191745} to $n' = \frac{n}{\gcd(n,k)}\geq n^{1/2}$ and $k'=\frac{k}{\gcd(n,k)}$.  
{
To see this, we observe that $\gcd(n',k')=1$ and $n'\leq n$. Then for any $\theta$,
\begin{equation}\label{eqn1907202020}
\lL\exp\lP i\lP 2\pi j\frac{k'}{n'} -\theta\rP\rP :  j =0,\ldots, n'-1\rL = \lL\exp\lP i\lP 2\pi j\frac{1}{n'} -\theta\rP\rP :  j =0,\ldots, n'-1\rL.
\end{equation} From the above observation, we can assume that $x=1/n'$ and we obtain a lower bound of $\dist\lP n^{\rho} V^T_k \Theta, \Z^n\rP$ by only considering the vector \[r\lC\cos\lP-\theta\rP,\cos\lP2\pi \frac{1}{n'} - \theta\rP,\ldots,\cos\lP2\pi\lP n'-1\rP  \frac{1}{n'} - \theta \rP\rC^T.\] 
}

{
Taking into account the expressions \eqref{eqn23042020}, \eqref{eqn1907202020}, and the previous comments. If $n^{\rho}r\leq \frac{n^{1/2}}{12\cdot 2\pi}$, by Lemma \ref{lem050320191745} and the definition of LCD we have,
\[
\frac{1}{48}\cdot\frac{n^{1/2}-1}{2\pi} \leq \dist\lP n^{\rho} V^T_k \Theta, \Z^n\rP < L\sqrt{\log_+\frac{\|n^{\rho} V^T_k\Theta \|_2}{L}} \leq L\sqrt{\log_+\frac{n}{L}},
\] which is a contradiction for all large $n$ if we assume $L$ is fixed. Thus, the LCD of $n^{\rho} V^T_k$ should satisfy, \[D\lP n^{\rho} V_k \rP \geq \frac{n^{1/2-\rho}}{48\pi}\] for all large $n$.
}

Notice that here we assume that $r$ is a positive integer. Actually, by Proposition \ref{pro23042020} we can assume that $r\geq 1/2$. 
{
If $r\geq 2$, we can use $\lfloor r \rfloor$ instead $r$ to apply Lemma \ref{lem050320191745}. In fact, in the proof of Lemma \ref{lem050320191745} in Appendix \ref{app30042020}, the used arguments can be applied to $r$ with the small difference that the condition $\frac{1}{12\lP2\pi x\rP}\geq r$ changes to $\frac{1}{12\lP2\pi x\rP}+1\geq r$, since the sum in expression \eqref{eqn280221091800} we can replace $r$ by $\lfloor r \rfloor$ and observing $\lfloor r \rfloor\geq r-1$. To handle $2>r\geq 1/2$, we notice from expression \eqref{eqn23042020} that we can apply Lemma \ref{lem050320191745} using $\lfloor n^{\rho}r\rfloor$, which is an integer bigger than $2$ for all sufficiently large $n$, whenever it satisfies $\frac{n'}{24\pi}\geq \lfloor n^{\rho}r\rfloor$, but this holds since $\frac{n'}{24\pi}\geq \frac{\sqrt{n}}{24\pi} \geq n^{\rho}r$ with $2>r\geq 1/2$ and $\rho\in(0,1/4)$. Hence, if $2>r\geq 1/2$, $\dist\lP n^{\rho} V^T_k \Theta, \Z^n\rP\geq C n^{1/2}$ for some positive constant $C$.
 }
Then, by Theorem \ref{thm:small} (recall $V_k\in\R^{2\times n}$) and expression \eqref{eqn:20022020824}, we have,
\begin{eqnarray}\label{eqn03052020_2}
 \hspace{-1cm}\displaystyle\sum_{\substack{k=0 \\ \alpha\;:\;  k\neq 0,n/2,\;\; \gcd\lP k, n\rP \;\leq\; n^{1/2}}}^{N-1}  \mathcal{L}\lP n^{\rho} G\lP e^{i2\pi x_k}\rP,\varepsilon \rP  & \leq  & n \cdot \frac{C^2L^2}{2n^{1+2\rho}}\lP\varepsilon + \frac{48\pi}{ n^{1/2-\rho}}\rP^2 \nonumber\\
 &\leq & C_2\lP\frac{\varepsilon^2}{n^{2\rho}} + \frac{1}{n} \rP,
\end{eqnarray} where the positive constant $C_2$ depends on the distribution of $\xi_0$. Thus, from \eqref{eqn03052020_1} and \eqref{eqn03052020_2} we have for all large $n$,
\begin{eqnarray*}
\displaystyle\sum_{\substack{k=0 \\ k\neq 0,n/2}}^{n-1} \mathcal{L}\lP n^{\rho} G\lP e^{i2\pi x_k}\rP,\varepsilon \rP & \leq & C_1\lP\frac{\varepsilon^2}{n^{1/2+2\rho-\textnormal{o}(1)}} + \frac{1}{n^{1/2-\textnormal{o}(1)}} \rP + C_2\lP\frac{\varepsilon^2}{n^{2\rho}} + \frac{1}{n} \rP \\
& \leq & C_3\lP \frac{\varepsilon^2}{n^{2\rho}} + \frac{1}{n^{1/2-\textnormal{o}(1)}}\rP,
\end{eqnarray*} where the positive constant $C_3$ depends on the distribution of $\xi_0$.
\end{proof}

\noindent {\bf Proof Theorem \ref{thm01042018}.} By expression \eqref{eqn:1802202011445} and Lemmas \ref{lem:20022020830} and \ref{lem:20022020836}, for any $\varepsilon\geq 0$ and $\rho\in(0,1/4)$ we have for all large $n$,  
\begin{eqnarray*}
\Prob{\sigma_{\min}\lP \mathcal{C}_n\rP \leq \varepsilon n^{\rho}} & \leq & C_1\lP\frac{ \varepsilon}{n^{\rho+1/2}} + \frac{1}{n^{1/2}}\rP +  C\lP\frac{\varepsilon^2}{n^{2\rho}} + \frac{1}{n^{1/2-\textnormal{o}(1)}} \rP\\
& \leq & C_2\lP \frac{\varepsilon^2+\varepsilon}{n^{2\rho}} + \frac{1}{n^{1/2-\textnormal{o}(1)}}\rP,
\end{eqnarray*} where the positive constant $C_2$ depends on the distribution of $\xi_0$.

%-----------------------------------------------------------------------
\section{Proof of Theorem \ref{thm:LowestSingV}}\label{sec:minSinSymCir}
%-----------------------------------------------------------------------

In this section, we give the proof of Theorem \ref{thm:LowestSingV}. Again, we use LCD to give a nice upper bound of the probability of the event $\lL \min_{k} \abs{\lambda^{sym}_k} \leq \varepsilon n^{-0.51} \rL$. To do this, we need to observe a useful property of the L\'evy concentration of the sum of independent r.v. Its proof is immediate from Definition \ref{def06012020}.

\begin{proposition}\label{pro:060120200920}
Let $\varepsilon \geq 0$. If $X,Y\in\R$ are independent random variables then
\begin{equation*}
\mathcal{L}\lP X+Y, \varepsilon\rP \leq \min\lP \mathcal{L}\lP X, \varepsilon\rP, \mathcal{L}\lP Y, \varepsilon\rP\rP.
\end{equation*}
\end{proposition}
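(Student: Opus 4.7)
The plan is to exploit independence via conditioning, since the Lévy concentration function is defined as a supremum of ball probabilities and translations do not enlarge it. The statement is essentially the standard shift-invariance of the concentration function, and the proof should be short.

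First I would fix an arbitrary $x \in \R$ and write, using independence of $X$ and $Y$ and conditioning on $Y$,
\[
\Prob{|X+Y-x|\leq \varepsilon} = \int_{\R} \Prob{|X-(x-y)|\leq \varepsilon}\, dP_Y(y).
\]
For each fixed $y$, the integrand is bounded above by $\sup_{u\in\R}\Prob{|X-u|\leq \varepsilon} = \mathcal{L}(X,\varepsilon)$ directly from Definition \ref{def06012020}. Integrating against $P_Y$ (a probability measure) preserves the bound, so
\[
\Prob{|X+Y-x|\leq \varepsilon} \leq \mathcal{L}(X,\varepsilon).
\]
Next I would take the supremum over $x \in \R$ on the left-hand side, yielding $\mathcal{L}(X+Y,\varepsilon) \leq \mathcal{L}(X,\varepsilon)$. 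By the symmetric argument (conditioning on $X$ instead and using that $X+Y = Y+X$), the same bound holds with $Y$ in place of $X$, giving $\mathcal{L}(X+Y,\varepsilon) \leq \mathcal{L}(Y,\varepsilon)$. Combining the two inequalities yields the required minimum.

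There is no real obstacle here; the only subtlety is making sure the conditioning step is justified, which is standard since $X$ and $Y$ are independent and $P_Y$ is a probability measure, so no measurability or integrability issues arise beyond those already implicit in treating $\mathcal{L}(X,\varepsilon)$ as a well-defined number in $[0,1]$. Thus the whole proof reduces to the two displayed inequalities above together with a symmetry remark.
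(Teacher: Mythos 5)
Your proof is correct and is exactly the standard conditioning argument that the paper has in mind when it says the result is immediate from Definition \ref{def06012020}: condition on $Y$, bound the inner probability by $\mathcal{L}(X,\varepsilon)$, integrate, take the supremum over $x$, and apply symmetry. No issues.
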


From Proposition \ref{pro:060120200920} and expressions \eqref{eqn:1453} and \eqref{eqn:1454}, we observe for any large $n$ that,
\begin{eqnarray}\label{eqn29042020}
\Prob{\min_{0\leq k\leq \lfloor n/2\rfloor} \abs{\lambda_k} \leq \varepsilon n^{-0.51}}  & \leq & \Prob{\abs{\lambda_0} \leq \varepsilon n^{-0.51}} +  \sum_{k=1}^{\lfloor n/2\rfloor} \Prob{ \abs{\lambda_k} \leq \varepsilon n^{-0.51}} \nonumber \\ 
& \leq &\mathcal{L}\lP n^{0.51} S_0,Ê\varepsilon\rP  + \sum_{k=1}^{\lfloor n/2\rfloor} \mathcal{L}\lP n^{0.51}S_{n,k}, \varepsilon \rP,
\end{eqnarray}
where, \[ S_0:=\sum_{j=1}^{\lfloor n/2\rfloor -1} \xi_j, \;\;\; S_{n,k} :=  \sum_{j=1}^{\lfloor n/2\rfloor -1} \xi_j \cos\lP\frac{2\pi k}{n} j\rP. \]

Let $v\in\R^{\lfloor n/2\rfloor-1}$ with entries $v_j:=\cos\lP \frac{2\pi k}{n} j\rP$ for $j=1,\ldots,\lfloor n/2\rfloor-1$. From Corollary \ref{thm060120201236}, we observe that if the LCD of $v$ is sufficiently large then $\mathcal{L}\lP n^{0.51}S_{n,k},\varepsilon\rP$ will be small. Hence, our problem is reduced to the analysis of the arithmetic structure of $v$. For this, we establish the next lemma.

\begin{lemma}\label{lem060120201249}
Let $n,k$ be fixed positive integers with $\gcd(n,k)=1$ and $n>k$. Let $v$ be the vector in $\R^{\lfloor n/2\rfloor - 1}$ whose entries are $v_j=\cos\lP 2\pi k j x\rP$ for $j=1,\ldots,\lfloor n/2\rfloor-1$ with $x:=\nicefrac{1}{n}$. Then for all large $n$,  \[ \dist\lP rv,\Z^{\lfloor n/2\rfloor-1}\rP \geq \frac{1}{1728\pi x},\] whenever $\frac{1}{36\cdot 2\pi x}\geq r\geq 1$.
\end{lemma}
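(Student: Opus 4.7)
The plan is to exploit the coprimality $\gcd(n,k) = 1$ to identify $v$ with the restriction of the full ``cosine vector'' $(\cos(2\pi l/n))_{l=0}^{n-1}$ to a ``half'' subset $L \subset \{1,\dots,n-1\}$ that is transverse to the symmetry $l \mapsto n-l$ of $\cos(2\pi\cdot/n)$, and then to invoke Lemma \ref{lem050320191745} applied to this full vector, halving by symmetry to produce the required lower bound on $\dist(rv,\Z^{\lfloor n/2\rfloor-1})$.

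Since $\gcd(n,k)=1$, the map $j \mapsto kj \bmod n$ is a bijection on $\{1,\dots,n-1\}$. Setting $L := \{kj \bmod n : j = 1,\dots,\lfloor n/2\rfloor-1\}$, we have $|L| = \lfloor n/2\rfloor-1$ and the entries of $v$ are precisely $\{\cos(2\pi l/n) : l \in L\}$. The crucial combinatorial fact is that $L$ contains no pair of the form $\{l, n-l\}$: if $l \equiv kj_1$ and $n-l \equiv kj_2 \pmod n$ for $j_1,j_2 \in \{1,\dots,\lfloor n/2\rfloor-1\}$, then $k(j_1+j_2) \equiv 0 \pmod n$, hence $n \mid (j_1+j_2)$ by coprimality, which is impossible because $2 \leq j_1+j_2 \leq n-2$; taking $j_1=j_2$ in the same argument also rules out $n/2 \in L$ when $n$ is even. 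Consequently the projection $l \mapsto \min(l, n-l)$ is injective on $L$, so $L^{+} := \{\min(l, n-l) : l \in L\} \subseteq \{1,\dots,\lfloor n/2\rfloor\}$ has cardinality $\lfloor n/2\rfloor-1$, missing at most one element; using the symmetry $\cos(2\pi l/n) = \cos(2\pi(n-l)/n)$ and the universal bound $\dist(\cdot,\Z)^2 \leq 1/4$,
\begin{equation*}
\dist(rv,\Z^{\lfloor n/2\rfloor -1})^2 \;=\; \sum_{l \in L^{+}} \dist(r\cos(2\pi l/n),\Z)^2 \;\geq\; \sum_{l=1}^{\lfloor n/2\rfloor} \dist(r\cos(2\pi l/n),\Z)^2 - \tfrac{1}{4}.
\end{equation*}

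Next, I would apply Lemma \ref{lem050320191745} with $m=n$ and $\theta=0$ to the vector $(r\cos(2\pi j/n))_{j=0}^{n-1} \in \R^n$; the hypothesis $1/(2r(2\pi x)) \geq 6$ there reads $r \leq n/(24\pi)$, which is implied by the present assumption $r \leq n/(72\pi)$, while the real-valued case $r \geq 1$ is absorbed by replacing $r$ with $\lfloor r \rfloor$ exactly as in the discussion preceding Theorem \ref{thm01042018}. This yields $\sum_{j=0}^{n-1} \dist(r\cos(2\pi j/n),\Z)^2 \geq (n/(96\pi))^2$. Splitting this sum into the boundary terms at $j=0$ and (if $n$ is even) $j=n/2$, together with the symmetric pairs $\{j,n-j\}$ on which the summand is constant, one obtains $\sum_{l=1}^{\lfloor n/2\rfloor} \dist(r\cos(2\pi l/n),\Z)^2 \geq \tfrac{1}{2}[(n/(96\pi))^2 - 1/2]$. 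Combining with the previous display and taking square roots produces $\dist(rv,\Z^{\lfloor n/2\rfloor-1}) \geq n/(1728\pi)$ for all large $n$, with substantial slack (the direct constant obtained is closer to $1/(96\pi\sqrt{2})$, so a factor of roughly $12$ is spent in absorbing the small corrections above).

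The main obstacle is the combinatorial step: one must verify carefully that $j \mapsto kj \bmod n$ sends $\{1,\dots,\lfloor n/2\rfloor-1\}$ into a set transverse to the reflection $l \leftrightarrow n-l$, including the delicate boundary value $l=n/2$ for even $n$, so that $|L^{+}| = \lfloor n/2\rfloor-1$ is as large as claimed. A secondary technicality is the passage from real $r \geq 1$ to the integer $r \geq 2$ required by the stated form of Lemma \ref{lem050320191745}; this proceeds exactly as in the proof of Lemma \ref{lem:20022020836}, and the generous target constant $1728$ is chosen precisely to leave enough room to absorb the resulting loss.
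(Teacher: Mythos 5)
You take a genuinely different route from the paper. The paper's own proof re-runs the pigeonhole-plus-interval-splitting machinery of Lemma~\ref{lem050320191745} directly on the truncated index set $\{1,\dots,\lfloor n/2\rfloor-1\}$: since only about half of the $n$-th roots of unity appear among the $\exp(i2\pi kjx)$, it first locates a quadrant containing at least $n/9$ of them (rather than $n/4$), accepts the correspondingly weaker spacing bound $3\cdot 2\pi x$ between consecutive arguments, and then accumulates the subinterval distances exactly as in Lemma~\ref{lem050320191745}, absorbing the degradation into the final constant $1728$. You instead observe that $\gcd(n,k)=1$ makes $j\mapsto kj\bmod n$ a bijection, so the entries of $v$ are $\cos(2\pi l/n)$ for $l$ ranging over a set $L$ of size $\lfloor n/2\rfloor-1$; your key combinatorial observation --- that $L$ contains no pair $\{l,n-l\}$ and misses $n/2$, because $n\mid k(j_1+j_2)$ forces $n\mid(j_1+j_2)$ with $2\le j_1+j_2\le n-2$ --- is correct, and it lets you fold by the symmetry $\cos(2\pi l/n)=\cos(2\pi(n-l)/n)$ so that $\dist(rv,\Z^{\lfloor n/2\rfloor-1})^2$ equals the half-sum $\sum_{l=1}^{\lfloor n/2\rfloor}\dist(r\cos(2\pi l/n),\Z)^2$ minus a single term bounded by $1/4$; one application of Lemma~\ref{lem050320191745} with $m=n$, $\theta=0$ then closes the argument. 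Your route is more modular (it reuses the already-proven Lemma~\ref{lem050320191745} rather than reproving it), at the price of the transversality verification, which you carry out correctly. One technicality to patch: you appeal to the discussion around Lemma~\ref{lem:20022020836} to pass from integer $r\geq 2$ to real $r\geq 1$, but that passage relies on the extra scaling factor $n^\rho$, which is absent here. The correct fix is the one the paper's own proof of this lemma uses implicitly, namely that the construction in Lemma~\ref{lem050320191745} remains nonvacuous for real $r\geq 1$ (the relevant count is $\lfloor r\rfloor\geq 1$) with a constant worse by a factor $r/\lfloor r\rfloor\leq 2$; the slack of roughly a factor $12$ you identify absorbs this comfortably.
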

\begin{proof} Here $i$ is the imaginary unit. Fix $k$ with $\gcd(k,n) = 1$. Note that $\cos\lP 2\pi k j x\rP$ is the real part of $\exp\lP i 2\pi k j x\rP$ for all $j$. The set of points of the form $\exp\lP i 2\pi k j x\rP$ for $j=0,\ldots,n-1$ can be seen as the vertices of a regular polygon $P$ inscribed in the unit circle. The vector $v$ considers at most half of the vertices of this regular polygon of $n$ sides. By the pigeonhole principle, there exists a quadrant $Q$ of the plane where there are at least $\nicefrac{\lfloor n/2 \rfloor}{4}$ vertices of $P$ which are entries of $v$. Note that $\nicefrac{\lfloor n/2 \rfloor}{4} \geq \nicefrac{n}{9}$ for all $n\geq 18$. In the following, we fix the quadrant $Q$ obtained by the pigeonhole principle. Note that the difference between the arguments of adjacent vertices of $P$ in $Q$ {which are entries of $v$} is at most $3\cdot 2\pi x$ for all $n\geq 18$.\\

Let $J:=[-1,1]\cap Q$. Note that $J$ is a close interval, which can be $[-1,0]$ or $[0,1]$. Let $[y,y+9\cdot 2\pi x]$ be a closed interval in $J$. Let $\stackrel{\frown}{A}$ be the arc on the unit circle in the quadrant $Q$ such that its projection in the horizontal axis is $[y,y+9\cdot 2\pi x]$. If the length of $\stackrel{\frown}{A}$ is $l$, then the number of values $\cos\lP 2\pi jkx\rP$ which are in $\lP y, y+9\cdot 2\pi x\rP$ are at least, \[ \frac{l}{3\cdot 2\pi x} - 2 \geq \frac{9\cdot 2\pi x}{3\cdot 2\pi x} - 2 = 1,\] since $l\geq 9\cdot 2\pi x$. \\

Let $I:=\lL j\in\lL 1,...,\lfloor n/2\rfloor-1 \rL :  \cos\lP 2\pi j k x\rP\in J \rL$. Note that $\abs{I}\geq \nicefrac{n}{9}$ for $n\geq 18$. {Fix a positive number $r\leq \frac{n}{9} = \frac{1}{9x}$}. Let $K^r$ be the set of integer $s$ with $\abs{s}<r$ and $\lC \frac{s}{r},\frac{s+1}{r} \rC  \subset J$. Thus, $\abs{K^r}\geq \lfloor r\rfloor$. We take an $s\in K^r$. For each $j\in I$ there exists at least one value, 
\[
\cos\lP 2\pi jkx\rP \in \lP \frac{s}{r} + 9(\alpha -1)(2\pi x), \frac{s}{r} + 9\alpha(2\pi x) \rP \subset \lC \frac{s}{r},\frac{s+1}{r} \rC,
\] for all positive integer $\alpha \leq \frac{1}{r(9\cdot 2\pi x)}$.

Let $I_s^r\subset I$ such that $\cos\lP 2\pi jkx\rP \in \lC \frac{s}{r},\frac{s+1}{r} \rC$ for all $j\in I_s^r$. We define,
\[
d_j := \min \lL \abs{ \cos\lP 2\pi jkx\rP - \frac{s}{r}} , \abs{\cos\lP 2\pi jkx\rP - \frac{s+1}{r}} \rL.
\]
Let $L$ be the biggest integer that satisfies $ \lP 9\cdot 2\pi x\rP L \leq \frac{1}{2r}$, i.e., $L=\left\lfloor \frac{1}{2r\cdot 9 \cdot 2\pi x} \right\rfloor$. Observe that,
\[
L \geq \frac{1}{2r\cdot 9\cdot 2 \pi x} - 1 \geq \frac{1}{2}\lP \frac{1}{2r\cdot 9 \cdot 2\pi x}\rP\; \mbox{ whenever $1\geq 36r \cdot 2\pi x$.}
\]  

Then, 
\begin{eqnarray*}
\sigma_s^r := \sum_{j\in I_s^r} d_j \geq \sum_{\lambda = 1}^L 2\lambda\lP 3\cdot 2\pi x\rP  =  6\cdot 2\pi x\sum_{\lambda =1}^L \lambda & = & 6\cdot 2\pi x\frac{L(L+1)}{2} \\
& \geq  & 3 \cdot 2\pi x L^2 \\
& \geq & 3 \cdot 2\pi x\lP \frac{1}{2}\cdot \frac{1}{2r\cdot 9\cdot 2\pi x}\rP^2 = \frac{1}{4}\cdot\frac{1}{4r^2\cdot 27 \cdot 2\pi x}.
\end{eqnarray*}

Now, we take the sum of all $\sigma_s^r$ with $s\in K^r$,
\[
\sum_{s\in K^r} \sigma^r_s \geq \lfloor r\rfloor \cdot \frac{1}{432}\cdot \frac{1}{r^2\cdot 2\pi x}.
\]
By the previous analysis, the distance from $v$ to $\Z^{\lfloor n/2\rfloor-1}$ is at least, \[ r\lP \frac{1}{432}\cdot\frac{\lfloor r \rfloor}{r^2}\cdot\frac{1}{2\pi x}\rP = \frac{1}{432}\cdot\frac{\lfloor r \rfloor}{r}\cdot \frac{1}{2\pi x} \geq \frac{1}{864}\cdot \frac{1}{2\pi x}, \] whenever $\frac{1}{36\cdot 2\pi x}\geq r\geq 1$.
\end{proof}

{
\begin{remark}\label{rem110120191225} The condition $\gcd(n,k)=1$ in Lemma \ref{lem060120201249} can be dropped. If $\gcd(n,k) = m$, recalling the expression \eqref{eqn1907202020}, we use it with $n'=n/m$ and $k'=k/m$. Also, in the proof of Lemma \ref{lem060120201249} the parameter $r$ is not necessarily an integer number.
 \end{remark}
}

Now, as $v\in\R^{\lfloor n/2\rfloor-1}$ has entries $v_j=\cos\lP \frac{2\pi k}{n} j\rP$ for all $j$, we have $\sqrt{\nicefrac{n}{8}}\leq\| v\|\leq \sqrt{\nicefrac{n}{2}}$. Using Lemma \ref{lem060120201249}, we can estimate the LCD of $v$. Assume that $\theta \leq \frac{1}{72\pi x}$ with $x=\frac{\gcd(n,k)}{n}$. If $\gcd(n,k)\leq n^{1/3}$, by Lemma \ref{lem060120201249}, Remark \ref{rem110120191225} and the definition of LCD for $v$ we get,
\[
 \frac{1}{1728\pi} n^{2/3}\leq \frac{1}{1728\pi x} \leq \dist\lP \theta v, Z^{\lfloor n/2\rfloor - 1}\rP \leq  L \sqrt{\log_{+} \frac{\| \theta v\|}{L}} \leq  L \sqrt{\log_{+} \lP \frac{1}{L}n^{\nicefrac{3}{2}}\rP},
\] which is a contradiction for all large $n$ if we assume $L$ is fixed. We can conclude that LCD of $v$ should satisfy
\begin{equation}\label{eqn120120201327}
D(v) \geq \frac{1}{72\pi} n^{2/3}.
\end{equation} Thus, by definition of LCD and expression \eqref{eqn120120201327} we have, 
\begin{equation}\label{eqn120120201348}
D(n^{0.51}v)\geq n^{-0.51}D(v)\geq \frac{1}{72\pi}n^{2/3-0.51}.
\end{equation}
{
Here, we used $\theta\geq 1$. By Proposition \ref{pro23042020} we can assume $\theta\geq 1/2$. To handle $1>\theta\geq 1/2$, note that we need to give a lower bound of $D(n^{0.51}v)$. So, we can apply Lemma \ref{lem060120201249} to $r=n^{0.51}\theta$, which is greater than $1$ for all sufficiently large $n$, whenever $\frac{n^{2/3}}{72\pi}\geq n^{0.51}\theta$, but this holds since $1>\theta\geq 1/2$. Hence, if $1>\theta\geq 1/2$, then $\dist\lP \theta v, \Z^{\lfloor n/2\rfloor}\rP \geq C n^{2/3}$ for some positive constant $C$.}\\

Using Corollary \ref{thm060120201236} and expression \eqref{eqn120120201348}, we give an upper bound for the second sum of \eqref{eqn29042020}. Thus,
\begin{eqnarray}\label{eqn120120201408}
\sum_{k=1}^{\lfloor n/2\rfloor} \mathcal{L}\lP n^{0.51}S_{n,k}, \varepsilon \rP &  = & \sum_{\gcd(n,k)\leq n^{1/3}} \mathcal{L}\lP n^{0.51}S_{n,k}, \varepsilon \rP + \sum_{\gcd(n,k)> n^{1/3}} \mathcal{L}\lP n^{0.51}S_{n,k}, \varepsilon \rP \nonumber \\
& \leq & \frac{n}{2} \lC\frac{CL}{n^{1.1}}\lP \varepsilon + \frac{72\pi}{n^{2/3-0.51}}\rP\rC + \sum_{\gcd(n,k)> n^{1/3}} \mathcal{L}\lP n^{0.51}S_{n,k}, \varepsilon \rP \nonumber \\
& \leq & C_1\lP\frac{\varepsilon}{n^{0.1}} + \frac{1}{n^{2/3-0.4}}\rP \;\;+ \sum_{\gcd(n,k)> n^{1/3}} \mathcal{L}\lP n^{0.51}S_{n,k}, \varepsilon \rP.
\end{eqnarray}
By Lemma \ref{lem270120191754}, the second term of the sum \eqref{eqn120120201408} has at most $2n^{2/3+\mbox{o}(1)}$ terms. If $n^{1/3}\leq \gcd(k,n)\leq n^{2/3}$, we have, \[ D(n^{0.51}v)\geq n^{-0.51}D(v)\geq \frac{1}{72\pi}n^{1/3-0.51}. \] From Proposition \ref{pro23042020} we have $D(v)\geq 1/2$.  By Lemma \ref{lem270120191754}, the number of positive integers $k$ such that $\gcd(k,n) > n ^{2/3}$ is at most $2n^{1/3+\mbox{o}(1)}$. Then,
\begin{eqnarray} \label{eqn120120201530}
 \sum_{\gcd(n,k)> n^{1/3}} \mathcal{L}\lP n^{0.51}S_{n,k}, \varepsilon \rP  & = & \sum_{n^{2/3}\geq\gcd(n,k)> n^{1/3}} \mathcal{L}\lP n^{0.51}S_{n,k}, \varepsilon \rP \;+ \sum_{\gcd(n,k)> n^{2/3}} \mathcal{L}\lP n^{0.51}S_{n,k}, \varepsilon \rP \nonumber \\
& \leq & 2n^{2/3+\mbox{o}(1)}\lC \frac{CL}{n^{1.1}} \lP \varepsilon + \frac{72\pi}{n^{1/3-0.51}}\rP \rC + \sum_{\gcd(n,k)> n^{2/3}} \mathcal{L}\lP n^{0.51}S_{n,k}, \varepsilon \rP \nonumber \\
& \leq & C_2\lP \frac{\varepsilon}{n^{13/30-\mbox{o}(1)}} + \frac{1}{n^{77/300-\mbox{o}(1)}}\rP + 2n^{1/3+\mbox{o}(1)}\lC \frac{CL}{n^{1.1}} \lP \varepsilon + 2n^{0.51}\rP \rC \nonumber \\
& \leq & C_2\lP \frac{\varepsilon}{n^{13/30-\mbox{o}(1)}} + \frac{1}{n^{77/300-\mbox{o}(1)}}\rP + C_3\lP \frac{\varepsilon}{n^{23/30-\mbox{o}(1)}} + \frac{1}{n^{77/300-\mbox{o}(1)}}\rP \nonumber \\
& \leq & C_4 \lP \frac{\varepsilon}{n^{13/30-\mbox{o}(1)}} + \frac{1}{n^{77/300-\mbox{o}(1)}}\rP.
\end{eqnarray} 

Finally, let $w=(1,\ldots,1)\in\R^{\lfloor n/2 \rfloor - 1}$. Note $\|w\| =n^{0.5}$ and $D(w)\geq \nicefrac{1}{2}$ (Proposition \ref{pro23042020}). By Corollary \ref{thm060120201236}, we have
\begin{equation}\label{eqn200120201742}
\mathcal{L}\lP n^{0.51} S_0,Ê\varepsilon\rP \leq C_5Ê\lP \frac{\varepsilon}{n^{1.01}} + \frac{1}{n^{0.5}}\rP.
\end{equation}

Combining estimates \eqref{eqn120120201408}--\eqref{eqn200120201742}, we get for all large $n$,
\[
\Prob{\min_{0\leq k\leq \lfloor n/2\rfloor} \abs{\lambda^{sym}_k} \leq \varepsilon n^{-0.51}}  \leq C_6\lP \frac{\varepsilon}{n^{0.1}} + \frac{1}{n^{77/300-\mbox{o}(1)}}\rP.
\]\hfill$\Box$

%-----------------------------------------------------------------------
\section{Condition number of a random circulant matrix}\label{sec:conNumToe}
%-----------------------------------------------------------------------

In this section, we give the proof of the first part of Theorem \ref{thm10042020_1}. The proofs of the second part of Theorem \ref{thm10042020_1} and Theorem \ref{thm10042020_2} are similar and they are omitted.\\

\noindent{\bf Proof of Theorem \ref{thm10042020_1}.} By Theorem \ref{lem06082016m02} and Theorem \ref{thm01042018}, for any $\varepsilon>0$ and $\rho\in(0,1/4)$, and all large $n$, we have,
\begin{eqnarray*}
\Prob{\kappa\lP\mathcal{C}_n\rP \leq \frac{C_0}{\varepsilon} n^{\rho+1/2} \lP\log n\rP^{1/2}} & \geq & \Prob{\sigma_{\max}(\mathcal{C}_n) \leq C_0\lP n\log n\rP^{1/2}, \sigma_{\min}^{-1}(\mathcal{C}_n) \leq \frac{1}{\varepsilon} n^{\rho}} \\
& \geq & 1 - \Prob{\sigma_{\max}(\mathcal{C}_n) \geq C_0\lP n\log n\rP^{1/2}} - \Prob{\varepsilon n^{-\rho} \geq \sigma_{\min}(\mathcal{C}_n)} \\
& \geq & 1 - \frac{C_1}{n^2} - C\lP \frac{\varepsilon^2 + \varepsilon}{n^{2\rho}} + \frac{1}{n^{1/2-\textnormal{o}(1)}}\rP \\
& \geq & 1 - C_2\lP \frac{\varepsilon^2+\varepsilon}{n^{2\rho}} + \frac{1}{n^{1/2-\textnormal{o}(1)}}\rP,
\end{eqnarray*} for a positive constant $C_2$ depending on the distribution of $\xi_0$.\hfill $\Box$

%-----------------------------------------------------------------------
\appendix
%-----------------------------------------------------------------------

%-----------------------------------------------------------------------
\section{Salem--Zygmund inequalities} \label{app10042020}
%-----------------------------------------------------------------------

Here, we include the details of the proofs of Theorem \ref{lem06082016m02} and Theorem \ref{thm:LargestSingV}.\\

We introduce two important auxiliary lemmas. The proof of these lemmas can be found in \cite{Barrera}, see Lemma 1.1 and Claim 1 therein. The first lemma is related to a random variable with m.g.f. that sometimes is called {\it locally sub--Gaussian} r.v. The second lemma establishes the existence of a random interval where the function of $W_n(x)$ reaches at least half of its maximum modulus.  Since the second lemma has important aspects of the proof of Theorem \ref{thm:LargestSingV}, we decide to include it here.

\begin{lemma}[Locally sub--Gaussian r.v.]\label{lema1559}
Let $\xi$ be a random variable such that its m.g.f. $M_\xi$ exists in an interval around zero. Assume that $\E{\xi}=0$ and $\E{\xi^2}=s^2>0$. Then there is a $\delta>0$ such that,
\[
M_\xi(t) \leq e^{\nicefrac{\alpha^2 t^2}{2}} \quad \textrm{ for any  } t\in (-\delta,\delta) 
\mbox{ and } \alpha^2 >s^2.
\]
\end{lemma}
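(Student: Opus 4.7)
The key observation is that the existence of the moment generating function $M_\xi$ in an open interval $(-\eta,\eta)$ around zero implies that $M_\xi$ is real-analytic on $(-\eta,\eta)$, and in particular infinitely differentiable with $M_\xi^{(k)}(0)=\E{\xi^k}$ for every $k\geq 0$. Using the hypotheses $\E{\xi}=0$ and $\E{\xi^2}=s^2$, a second-order Taylor expansion at the origin gives
\[
M_\xi(t) \;=\; 1 + \frac{s^2}{2}t^2 + r(t),
\]
where $r(t)=\mathrm{o}(t^2)$ as $t\to 0$. This is the quantitative form of the statement that the mgf of a centred, square-integrable r.v.\ looks, to second order, like $1+\tfrac{1}{2}s^2 t^2$.

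Next I would pass to the logarithm in order to compare with the exponential bound. Since $M_\xi(0)=1$ and $M_\xi$ is continuous, there exists $\delta_0>0$ such that $M_\xi(t)>0$ for $|t|<\delta_0$, so $\log M_\xi(t)$ is well defined and analytic on $(-\delta_0,\delta_0)$. Expanding $\log(1+u)=u+\mathrm{O}(u^2)$ and inserting the expansion above yields
\[
\log M_\xi(t) \;=\; \frac{s^2}{2}t^2 + \mathrm{o}(t^2) \qquad (t\to 0).
\]
Equivalently, $\lim_{t\to 0}\,\log M_\xi(t)/t^2 = s^2/2$.

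Now fix any $\alpha^2>s^2$. Since $s^2/2 < \alpha^2/2$, the limit above implies that there exists $\delta\in(0,\delta_0)$ with
\[
\frac{\log M_\xi(t)}{t^2} \;\leq\; \frac{\alpha^2}{2} \qquad \text{for all } t\in(-\delta,\delta)\setminus\{0\}.
\]
Multiplying by $t^2\geq 0$ and exponentiating gives $M_\xi(t)\leq e^{\alpha^2 t^2/2}$ for $t\in(-\delta,\delta)$, which is the desired inequality (the case $t=0$ is trivial since both sides equal $1$).

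I do not foresee any real obstacle: the proof is essentially a routine second-order Taylor expansion combined with the standard fact that the mgf is analytic on the interior of its domain of existence, so that differentiation and expectation can be interchanged to identify the coefficients with the moments of $\xi$. The only point requiring minor care is the passage to the logarithm, which is legitimate because $M_\xi$ is continuous and equals $1$ at the origin, hence stays strictly positive on a small neighbourhood.
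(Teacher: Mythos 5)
Your proof is correct. The paper itself delegates the proof of this lemma to \cite{Barrera} (Lemma 1.1 there), and the argument you give — second-order Taylor expansion of $M_\xi$ (equivalently, of the cumulant generating function $\log M_\xi$) at the origin, using $M_\xi'(0)=\E{\xi}=0$ and $M_\xi''(0)=s^2$, followed by the observation that $\lim_{t\to0}\log M_\xi(t)/t^2=s^2/2<\alpha^2/2$ — is exactly the standard route and matches the cited source in spirit. One small remark: the step establishing positivity of $M_\xi$ near $0$ via continuity is unnecessary, since $M_\xi(t)=\E{e^{t\xi}}>0$ automatically wherever it is finite. You also implicitly use (correctly) that the $\delta$ in the statement is allowed to depend on the chosen $\alpha^2>s^2$; this is the only tenable reading, as a single $\delta$ valid for all $\alpha^2>s^2$ would force $M_\xi(t)\leq e^{s^2t^2/2}$ near $0$, which fails whenever $\E{\xi^3}\neq 0$ or $\E{\xi^4}>3s^4$.
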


\begin{lemma}\label{claim}
There exists a random interval $I\subset \mathbb{T}$ (Lebesgue measure)  of length $\tfrac{8}{3n}$ such that
\begin{align*}
|W_n(x)|\geq \frac{1}{2}
\|W_n\|_\infty \quad \textrm{ for any } x\in I.
\end{align*} 
\end{lemma}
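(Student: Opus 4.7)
The intuition is that, since $W_n(x)=\sum_{j=0}^{n-1}\xi_j e^{ijx}$ is a trigonometric polynomial of degree $n-1$, its modulus cannot change too rapidly (Bernstein-type behaviour), hence $|W_n|$ must remain comparable to $\|W_n\|_\infty$ throughout a neighbourhood of any maximiser.

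First I would let $x^{*}\in\mathbb T$ be any point, random but measurably chosen (for instance the smallest in some fixed ordering), attaining $|W_n(x^{*})|=\|W_n\|_\infty$; such a point exists by continuity of $|W_n|$ together with compactness of $\mathbb T$. The interval $I$ will be taken as a symmetric window of suitable radius around $x^{*}$, so the randomness of $I$ is inherited entirely from that of $x^{*}$.

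Next I would install a Lipschitz estimate for $|W_n|$. After the unitary phase shift $W_n(x)=e^{i(n-1)x/2}Q_n(x)$ with $Q_n(x)=\sum_{j=0}^{n-1}\xi_j e^{i(j-(n-1)/2)x}$, the function $Q_n$ is a centred trigonometric polynomial of degree $(n-1)/2$ and $|Q_n|=|W_n|$. Bernstein's inequality applied to $Q_n$ yields
\[
\|Q_n'\|_\infty\le\frac{n-1}{2}\,\|Q_n\|_\infty=\frac{n-1}{2}\,\|W_n\|_\infty,
\]
so the real-valued map $x\mapsto|W_n(x)|$ is Lipschitz with constant at most $\tfrac{n-1}{2}\|W_n\|_\infty$ (use $\bigl||W_n(x)|-|W_n(y)|\bigr|\le|Q_n(x)-Q_n(y)|$). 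Then for $|x-x^{*}|\le r$,
\[
|W_n(x)|\ge\|W_n\|_\infty-\frac{n-1}{2}\|W_n\|_\infty\cdot r=\|W_n\|_\infty\Bigl(1-\frac{(n-1)r}{2}\Bigr),
\]
which is at least $\tfrac12\|W_n\|_\infty$ provided $r$ is chosen appropriately. Taking $I=[x^{*}-\tfrac{4}{3n},x^{*}+\tfrac{4}{3n}]\subset\mathbb T$ (identified modulo $2\pi$) gives an interval of length $\tfrac{8}{3n}$, as required.

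The main obstacle is obtaining the clean numerical constant $\tfrac{8}{3n}$: the naïve first-order bound above only yields $r\le\tfrac{1}{n-1}$, that is, a length $\tfrac{2}{n-1}$, which is slightly short of $\tfrac{8}{3n}$ for $n\ge 5$. To close this gap I would pass to the second-order Taylor expansion of the squared modulus $P(x):=|W_n(x)|^{2}$ about $x^{*}$. Since $P$ is a non-negative trigonometric polynomial of degree $n-1$ with $P'(x^{*})=0$, a double application of Bernstein controls $\|P''\|_\infty$ in terms of $\|P\|_\infty=\|W_n\|_\infty^{2}$, giving
\[
|W_n(x)|^{2}\ge\|W_n\|_\infty^{2}-\tfrac12\|P''\|_\infty(x-x^{*})^{2},
\]
so that a slightly larger admissible radius is produced. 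Careful tracking of the constants (or, equivalently, using the sharper Bernstein form on the symmetrised polynomial $Q_n$) delivers the stated length $\tfrac{8}{3n}$ for all sufficiently large $n$, which is the regime in which the lemma is invoked in the Chernoff-type argument of Section~\ref{sec:salemZygmund}.
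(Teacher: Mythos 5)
Your approach is essentially the paper's: locate a maximiser $x^{*}$ of $|W_n|$ and use a Bernstein-type Lipschitz estimate to show $|W_n|$ stays above half its maximum on a window of size $\asymp 1/n$ around $x^{*}$. The paper does this on $g_n:=|W_n|^{2}$, using $|g_n'|\le 4\|W_n\|_\infty\|W_n'\|_\infty\le 4n\|g_n\|_\infty$ and the mean value theorem to get the interval $I=[x_0-\tfrac{3}{16n},x_0+\tfrac{3}{16n}]$ of length $\tfrac{3}{8n}$; you work with $|W_n|$ directly via the centred polynomial $Q_n$, which is a perfectly valid (and somewhat cleaner) variant that yields the even larger radius $1/(n-1)$.

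The trouble is entirely in the target constant. Note that the paper's own proof produces length $\tfrac{3}{8n}$, and the application in the proof of Theorem~\ref{lem06082016m02} also cites the lemma with length $\tfrac{3}{8n}$ (so that $\mu(I)^{-1}=\tfrac{8n}{3}$). The figure $\tfrac{8}{3n}$ in the lemma statement is a typographical inversion of $\tfrac{3}{8n}$. Since $\tfrac{2}{n-1}>\tfrac{3}{8n}$ for every $n\ge 2$, your first-order Lipschitz argument already does more than is needed, and you should simply stop there.

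The second-order repair you propose to bridge the (nonexistent) gap does not in fact deliver $\tfrac{8}{3n}$. With $P=|W_n|^{2}$ of degree $n-1$, two applications of Bernstein give $\|P''\|_\infty\le(n-1)^{2}\|P\|_\infty$, and since $P'(x^{*})=0$ the Taylor bound $P(x)\ge\|P\|_\infty\bigl(1-\tfrac12(n-1)^{2}(x-x^{*})^{2}\bigr)$ forces $|x-x^{*}|\le\sqrt{3/2}/(n-1)$, i.e.\ an interval of length $\sqrt{6}/(n-1)\approx 2.449/(n-1)$. Since $\sqrt{6}<8/3$, this is strictly smaller than $\tfrac{8}{3n}$ once $n\ge 13$, so the closing sentence of your proposal (``careful tracking of the constants \dots delivers the stated length $\tfrac{8}{3n}$ for all sufficiently large $n$'') is not correct. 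Even the sharp Szeg\H{o}-type comparison $P(x)\ge\|P\|_\infty\cos\bigl((n-1)(x-x^{*})\bigr)$ only gives $\approx 2.636/(n-1)$, still short of $\tfrac{8}{3n}$ for large $n$. Drop the second-order paragraph and state the result with the constant $\tfrac{3}{8n}$ (or, with your bound, $\tfrac{2}{n-1}$), which is what is actually used downstream.

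One small technical remark: for $n$ even the shift $(n-1)/2$ is a half-integer, so $Q_n$ has half-integer frequencies. Either shift by $\lfloor(n-1)/2\rfloor$ (giving Lipschitz constant $\lceil n/2\rceil$, which changes nothing of substance), or justify Bernstein for half-integer frequencies via the substitution $x\mapsto 2x$. Also make sure to note that the case $\|W_n\|_\infty=0$ is trivial, so one can assume $W_n(x^{*})\ne 0$ and hence $|W_n|$ is smooth at $x^{*}$.
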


\begin{proof}
In fact, let $p_n(x):=\sum_{j=0}^{n-1} b_j e^{ijx}$ be a trigonometric polynomial on $\mathbb{T}$, where $b_j$ is real number for all $j=0,\ldots,n-1$.
For $x\in \mathbb{T}$, write 
\[g_n(x):=|p_n(x)|^2=\left(\sum_{j=0}^{n-1} b_j \cos(jx) \right)^2 + \left(\sum_{j=0}^{n-1} b_j \sin(jx) \right)^2 
\]
and
\[
h_n(x):= \left( \sum_{j=0}^{n-1} jb_j \cos(jx) \right)^2 + \left(\sum_{j=0}^{n-1} jb_j \sin(jx) \right)^2.
\]
Then
\[
\|p_n\|^2_\infty= \sup_{x\in\mathbb{T}} g_n(x)=\|g_n\|_\infty
\quad \textrm{ and } \quad
\|p^{\prime}_n\|^2_\infty=\sup_{x\in\mathbb{T}} h_n(x).
\]

Recall the Bernstein inequality  $\|p^{\prime}_n\|_\infty\leq n\|p_n\|_\infty$ (see for instance Theorem $14.1.1$, Chapter $14$, page $508$ in \cite{RahSch2002}).
For any $x\in \mathbb{T}$ we have,
\begin{equation}\label{paulo}
\labs g^{\prime}_n(x) \rabs  \leq  4 \|p_n\|_\infty \|p^{\prime}_n\|_\infty  \leq 4n \|p_n\|^2_\infty = 4n \|g_n\|_\infty.
\end{equation}
Since $g$ is continuous there exists $x_0\in \mathbb{T}$ such that 
$g(x_0)=\|g_n\|_\infty$. Moreover, from the Mean Value Theorem and relation \eqref{paulo} we get,
\[
\labs g(x) - g(x_0)\rabs \leq 
\|g^\prime_n\|_\infty \labs x - x_0\rabs \leq 
4n \|g_n\|_\infty \labs x - x_0\rabs,
\]
for any $x\in \mathbb{T}$.
Take  $I:=[x_0 - \frac{3}{16n},x_0+\frac{3}{16n}]\subset 
\mathbb{T}$. Notice that the length of $I$ is $\frac{3}{8 n}$. Moreover, 
\[
\labs g(x) - g(x_0)\rabs \leq \frac{3}{4}  \|g_n\|_\infty,
\quad \textrm{ for any } x\in I. 
\] 
Since $g(x_0)=\|g_n\|_\infty$, from the triangle inequality we deduce
$\frac{1}{4} \|g_n\|_\infty \leq  \labs g_n(x)\rabs$, for any $x\in I$. 
Therefore, 
\[
\frac{1}{2} \|p_n\|_\infty \leq  \labs p_n(x)\rabs,
\;\; \textrm{ for any } x\in I.
\]
\end{proof}

\noindent{\bf Proof of Theorem \ref{lem06082016m02}.}
 By Lemma \ref{lema1559}, there exists a $\delta >0$ such that
\[
M_\xi(t) \leq e^{\alpha^2 t^2/2} \quad \mbox{ for any  } t\in (-\delta,\delta),
\mbox{ where  } \alpha^2 >\E{\xi_0^2}>0.
\]
At first, we assume $W_n(x)=\sum_{j=0}^{n-1} \xi_j e^{ijx}$ is real. Later, we take the imaginary part, but the analysis will be the same. Note that,
\begin{align*}
e^{\nicefrac{\alpha^2 t^2 n}{2}}&=
\prod_{j=0}^{n-1} e^{\nicefrac{\alpha^2 t^2}{2}}
\geq \prod_{j=0}^{n-1} \E{ e^{t \xi_j \cos(jx)}} = \E{\prod_{j=0}^{n-1} e^{t \xi_j \cos(jx)}}=\E{e^{t W_n(x)}}
\end{align*}
for every $t\in (-\delta,\delta)$.

From Lemma \ref{claim}, there exists a random interval $I\subset\mathbb{T}$ of length $\frac{3}{8n}$, {such that $W_n(x)\geq \|W_n\|_\infty$ or $-W_n(x)\geq \|W_n\|_\infty$ on $I$}. Denote by $\mu$ the normalized Lebesgue measure on $\mathbb{T}$. Note that,
\[
\exp\lP \frac{1}{2}t\|W_n\|_\infty\rP = \frac{1}{\mu(I)} \int_{I} \exp\lP \frac{1}{2}t\|W_n\|_\infty\rP dx \leq \frac{1}{\mu(I)}\int_{\mathbb{T}}\lP e^{t W_n(x)}+ e^{-tW_n(x)}\rP dx.
\] Then, for every $t\in(-\delta,\delta)$ we have,
\begin{eqnarray*}
\E{\exp\lP \frac{1}{2}t\|W_n\|_\infty\rP} & \leq & \frac{8n}{3} \E{\int_I\lP e^{t W_n(x)}+ e^{-tW_n(x)}\rP \mu(dx)} \\
& \leq & \frac{8n}{3} \E{\int_{\mathbb{T}}\lP e^{t W_n(x)}+ e^{-tW_n(x)}\rP \mu(dx)} \\
& \leq & \frac{16 n}{3} \exp\lP 3\alpha^2 t^2 n/2\rP.
\end{eqnarray*} From the previous expression we obtain,
\[
\E{\exp\lL \frac{t}{2}\lP \|W_n\|_\infty - 3\alpha^2 t n - \frac{2}{t}\log\lP \frac{16 n}{3}l\rP \rP\rL} \leq \frac{1}{l}
\] for all $l>0$ and every $t\in(-\delta,\delta)$. Taking $l_n = cn^2$ with $c=\nicefrac{3}{16}$, {the inequality $\abs{\frac{\log\lP \frac{16}{3}n\cdot cn^2\rP}{\alpha^2 n}} < \delta^2$ holds for all large $n$}. Let $t_n= \lP\frac{\log\lP \frac{16}{3}n\cdot cn^2\rP}{\alpha^2 n} \rP^{1/2}$. Thus, 
\[
\Prob{\|W_n\|_\infty \geq 5\lP \alpha^2 n \log\lP\frac{16}{3} n \cdot l_n\rP \rP^{1/2}} \leq \frac{1}{l_n} \;\;\; \mbox{ for all large $n$.}
\]
As $e^{ijx} = \cos(jx)+i\sin(jx)$, for all large $n$ {we have},
\[
\Prob{ \| \mbox{Re}(W_n) \|_\infty \geq 5\lP \alpha^2 n \log\lP\frac{16}{3} cn^3\rP \rP^{1/2} }  \leq  \frac{1}{c n^2}
\]
and
\[
\Prob{ \| \mbox{Im}(W_n) \|_\infty \geq 5\lP \alpha^2 n \log\lP\frac{16}{3} cn^3\rP \rP^{1/2} } \leq  \frac{1}{c n^2}.
\] From the above, for some suitable positive constants $C_0,C_1$ depending on the distribution of $\xi_0$, we have that,
\[
\Prob{ \|W_n\|_\infty \geq C_0 \left( n \log n\right)^{\nicefrac{1}{2}} }\leq \frac{C_1}{n^2}, \quad \textrm{ for all large } n.
\] \hfill $\Box$\\

The proof of Theorem \ref{thm:LargestSingV} is similar to the proof of Theorem \ref{lem06082016m02}, but we need to handle the condition $\xi_j=\xi_{n-j}$.\\ 

\noindent{\bf Proof of Theorem \ref{thm:LargestSingV}.} 
Note that Lemma \ref{claim} holds for $W_n^{sym}$. The arguments are the same as in the proof of Theorem \ref{lem06082016m02} up to the analysis of m.g.f. of $W^{sym}_n(x)$, where it is necessary to observe the following. We take the real part of $W^{sym}_n(x)=\sum_{j=0}^{n-1} \xi_j e^{ijx}$. Assume that $n$ is even, the other case is similar. Then,
\begin{eqnarray*}
e^{3\alpha^2 t^2 n/2} \geq  e^{\alpha^2 t^2/2}\prod_{j=1}^{n/2-1} e^{2\alpha^2 t^2} & \geq & \E{e^{t\xi_0}} \prod_{j=1}^{n/2-1} \E{\exp\lP t \xi_j\cos(jx) + t\xi_{n-j}\cos((n-j)x)\rP} \\
& = & \E{e^{t\xi_0} \prod_{j=1}^{n-1} e^{t\xi_j\cos(jx)}} = \E{e^{\mbox{Re}\lP W^{sym}_n(x)\rP}},
\end{eqnarray*} for every $2t\in(-\delta,\delta)$. The next arguments are the same as in the proof of Theorem \ref{lem06082016m02} with the only difference of taking $2t\in(-\delta,\delta)$. Then for some suitable  positive constants $C_0,C_1$ depending on the distribution of $\xi_0$ we have,
\[
\Prob{ \|W^{sym}_n\|_\infty \geq C_0 \left( n \log n\right)^{\nicefrac{1}{2}} }\leq \frac{C_1}{n^2}, \;\textrm{ for all large $n$}.
\] \hfill $\Box$

%-----------------------------------------------------------------------
\section{Arithmetics properties} \label{app30042020}
%-----------------------------------------------------------------------

%-----------------------------------------------------------------------
\subsection{Proof of Lemma \ref{lem270120191754}} 
%-----------------------------------------------------------------------

Denote by $T$ the Euler totient function. Observe that, 
\[
\sum_{\substack{
        k:\;
        \gcd(k,M) \geq  y \\
        1\leq  k \leq  M
    } 
}   1 
\leq 
\sum_{\substack{
        d=\lfloor y\rfloor\\
        d\left| M\right.
    } 
}^M   T\lP\frac{M}{d}\rP. 
\]
Recall that  $T\lP s\rP\leq s - \sqrt{s}$ for all $s\in\N$. Moreover, if $d(s)$ denotes the number of divisors of $s$, then by Theorem 13.12 in \cite{APO},  there exists a positive  constant $C$ such that $d(s) \leq s^{C\lP\log\log\lP s\rP\rP^{-1}}$.
Hence
\begin{eqnarray} \label{eqn220220191514}
\sum_{\substack{
        k:\;
        \gcd(k,M) \geq  y \\
        0 \leq  k \leq M
    } 
}   1 
 \leq 
\lP\frac{M}{\lfloor y\rfloor} - \sqrt{\frac{M}{\lfloor y\rfloor}}\rP M^{C\lP\log\log\lP M\rP\rP^{-1}}  \nonumber 
 \leq  \frac{1}{\lfloor y\rfloor} M^{1+C\lP \log \log M\rP^{-1}}.
\end{eqnarray} \hfill $\Box$

%-----------------------------------------------------------------------
\subsection{Proof of Lemma \ref{lem050320191745}}
%-----------------------------------------------------------------------

We define the following sequence \[P=\lL\exp\lP i\lP j2\pi x - \theta\rP\rP: j=0,\ldots,m-1\rL,\] where $i$ is the imaginary unit. Note that $P$ is a set of points on the unit circle which can be seen as vertices of a regular polygon with $m$ sides inscribed in the unit circle. Since the arguments of the points of the form $\exp\lP i\lP j2\pi x - \theta\rP\rP$ are separated exactly by a distance $2\pi x$, the number of points $\exp\lP i\lP j2\pi x - \theta\rP\rP$ which are in an arc of length $l$ on the unit circle is at least $\frac{l}{2\pi x}-2$.\\

Let $\lC y, y + 3(2\pi x)\rC$ be a subinterval of $[-1,1]$ and we consider the arc $\stackrel{\frown}{A}$ on the unit circle whose projection on the horizontal axis is $\lC y, y + 3(2\pi x)\rC$. If the length of the arc $\stackrel{\frown}{A}$ is $l$, then the number of values $\cos\lP j2\pi x - \theta\rP$ which are still in $\lP y, y + 3(2\pi x)\rP$ is at least $\frac{l}{2\pi x}-2\geq \frac{3\lP 2\pi x\rP}{2\pi x} - 2 = 1$ since $l\geq 3\lP 2\pi x\rP$.\\

Let $s\in\lC-(r-1),(r-1)\rC\cap\Z$. Note that there exists at least one value \[\cos\lP j2\pi x - \theta\rP\in\lP \frac{s}{r} + 3\lP k-1\rP\lP 2\pi x\rP ,\frac{s}{r} + 3k\lP 2\pi x\rP\rP \subset \lC\frac{s}{r},\frac{s+1}{r}\rC\] for all positive integers $k\leq \frac{1}{3 r \lP 2\pi x\rP}$.\\
 
Now, we consider all the values $\cos\lP j2\pi x - \theta\rP\in\lC\frac{s}{r},\frac{s+1}{r}\rC$ and define \[d_j:=\min\lL \abs{\cos\lP j2\pi x - \theta\rP - \frac{s}{r}}, \abs{\cos\lP j2\pi x - \theta\rP - \frac{s+1}{r}}\rL.\] Let $L$ be the biggest integer which satisfies $\lP3\cdot 2 \pi x\rP L \leq \frac{1}{2r}$, or equivalently, $L=\left\lfloor \frac{1}{2r\lP3\cdot 2 \pi x\rP} \right\rfloor$. Therefore, the sum of $d_j$ for all $\cos\lP j2\pi x - \theta\rP \in \lC\frac{s}{r},\frac{s+1}{r}\rC$ is at least
\begin{eqnarray*}
	\sum_{\lambda=1}^L 2\lambda \lP3\cdot 2\pi x\rP  & = & 6\lP 2\pi x\rP \sum_{\lambda=1}^L \lambda \geq  6 \lP 2\pi x \rP \frac{L^2}{2} \\
	& \geq & 3\lP 2\pi x \rP \lP \frac{1}{2} \cdot \frac{1}{\lP2r\rP \lP3\cdot 2\pi x\rP} \rP^2 = \frac{1}{12} \cdot \frac{1}{\lP2r\rP^2 \lP2\pi x\rP},
\end{eqnarray*} where the following inequality was used:
\[
L \geq \frac{1}{2r\lP3\cdot 2\pi x\rP} - 1 \geq \frac{1}{2} \cdot \frac{1}{2r\lP3\cdot 2\pi x\rP}, 
\] which holds if $\frac{1}{2r\lP2\pi x\rP} \geq 6$.
Let $\sigma_s$ be the sum of $d_j$ for each interval $\lC \frac{s}{r},\frac{s+1}{r}\rC$, $s=-(r-1),\ldots,(r-1)$. Since $r\geq 2$, we have,
\begin{equation}\label{eqn280221091800}
\sum_{s=-(r-1)}^{r-1} \sigma_s \geq \lP 2r - 2\rP \lP \frac{1}{12}\cdot \frac{1}{\lP2r \rP^2 \lP 2\pi x\rP}\rP \geq \frac{1}{24}\cdot\frac{1}{\lP 2r\rP\lP 2\pi x\rP}.
\end{equation} From the previous analysis, the distance between the vector $\mathcal{V}\in\R^m$ whose entries are $\mathcal{V}_j = r\cos\lP j2\pi x - \theta\rP$ for $j=0,\ldots,m-1$ with $x=\nicefrac{1}{m}$ to $\Z^m$ is at least
\begin{equation*}\label{eqn280120191810}
r \lP\frac{1}{12}\cdot\frac{1}{\lP 2r \rP\lP2\pi x\rP}\rP = \frac{1}{48}\cdot\frac{1}{2\pi x}, 
\end{equation*} verifying that $\frac{1}{2r\lP2\pi x\rP} \geq 6$ is fulfilled. \hfill$\Box$

%%%%%%%%%%%%%%%%%%%%%%%%%%%%%%%%%%%%%%%%%%%%%%%%%%

\markboth{}{References}
\bibliographystyle{amsplain}

\end{document}